\date{}
\numberwithin{equation}{section}
\newtheorem{maintheorem}{Theorem}
\newtheorem*{conjecture*}{Conjecture}
\newtheorem{theorem}{Theorem}[section]
\newtheorem*{theorem*}{Theorem}
\newtheorem{lemma}[theorem]{Lemma}
\newtheorem{corollary}[theorem]{Corollary}
\newtheorem{definition}[theorem]{Definition}
\theoremstyle{definition}{
\newtheorem{remark}[theorem]{Remark}
\newtheorem*{remark*}{Remark}
\newtheorem{example}[theorem]{Example}
\newtheorem*{example*}{Example}
\newtheorem{observation}[theorem]{Observation}
}
\newcommand{\ignore}[1]{}
\newcommand{\one}{\mathbbm{1}}
\renewcommand{\P}{\mathbb P}
\newcommand{\E}{\mathbb E}
\renewcommand{\epsilon}{\varepsilon}
\renewcommand{\a}{\alpha}
\newcommand{\D}{\Delta}
\renewcommand{\L}{\Lambda}
\newcommand{\cG}{\mathcal{G}}
\newcommand{\R}{\mathbb R}
\newcommand{\ha}{{\hat{\alpha}}}
\newcommand{\sL}{\mathscr{L}}
\newcommand{\edg}{\texttt{e}}
\newcommand{\Sc}{\mathcal{S}}
\newcommand{\cF}{\mathcal{F}}
\newcommand{\cE}{\mathcal{E}}
\newcommand{\cN}{\mathcal{N}}
\newcommand{\cA}{\mathcal{A}}
\newcommand{\cB}{\mathcal{B}}
\newcommand{\sX}{\mathscr{X}}
\newcommand{\sY}{\mathscr{Y}}
\newcommand{\Bin}{\operatorname{Bin}}
\begin{document}
\title{Random triangle removal}

%\author{ {Tom Bohman\thanks{Department of Mathematical Sciences, Carnegie Mellon
%University, Pittsburgh, PA 15213. Email address: {\tt tbohman@math.cmu.edu}.
%Research supported in part by NSF grant DMS-1001638.}} \qquad
%{Alan Frieze\thanks{Department of Mathematical Sciences, Carnegie Mellon
%University, Pittsburgh, PA 15213. Email address: {\tt alan@random.math.cmu.edu}.
%Research supported in part by NSF grant DMS-0721878.}} \qquad
%{Eyal Lubetzky\thanks{Theory Group of Microsoft Research, One Microsoft Way, Redmond, WA 98052. Email address:
%{\tt eyal@microsoft.com}.}}}

\author{Tom Bohman}
\address{Tom Bohman\hfill\break
Department of Mathematical Sciences\\
Carnegie Mellon University\\
Pittsburgh, PA 15213, USA.}
\email{tbohman@math.cmu.edu}
%\thanks{T.\ Bohman is supported in part by NSF grant DMS-1001638.}
\urladdr{}

\author{Alan Frieze}
\address{Alan Frieze\hfill\break
Department of Mathematical Sciences\\
Carnegie Mellon University\\
Pittsburgh, PA 15213, USA.}
\email{alan@random.math.cmu.edu}
%\thanks{A.\ Frieze is supported in part by NSF grant DMS-0721878.}
\thanks{T.\ B.\ is supported in part by NSF grant DMS-1001638. A.\ F.\ is supported in part by NSF grant DMS-0721878.}
\urladdr{}

\author{Eyal Lubetzky}
\address{Eyal Lubetzky\hfill\break
Theory Group of Microsoft Research\\
One Microsoft Way\\
Redmond, WA 98052, USA.}
\email{eyal@microsoft.com}
\urladdr{}

\begin{abstract}
Starting from a complete graph on $n$ vertices, repeatedly delete the edges of a uniformly chosen triangle. This stochastic process terminates once it arrives at a triangle-free graph, and the fundamental question is to estimate the final number of edges (equivalently, the time it takes the process to finish, or how many edge-disjoint triangles are packed via the random greedy algorithm).
Bollob\'as and Erd\H{o}s~(1990) conjectured that the expected final number of edges has order $n^{3/2}$,
 motivated by the study of the Ramsey number $R(3,t)$.
 An upper bound of $o(n^2)$ was shown by Spencer~(1995) and independently by R\"odl and Thoma~(1996). Several bounds were given for variants and generalizations (e.g., Alon, Kim and Spencer~(1997) and Wormald~(1999)), while the best known upper bound for the original question of Bollob\'as and Erd\H{o}s was $n^{7/4+o(1)}$ due to Grable~(1997). No nontrivial lower bound was available.

Here we prove that with high probability the final number of edges in random triangle removal is equal to $n^{3/2+o(1)}$, thus confirming the $3/2$ exponent conjectured by Bollob\'as and Erd\H{o}s and matching the predictions of Spencer \emph{et al}. For the upper bound, for any fixed $\epsilon>0$ we construct a family of $\exp(O(1/\epsilon))$ graphs by gluing $O(1/\epsilon)$ triangles sequentially in a prescribed manner, and dynamically track all homomorphisms from them, rooted at any two vertices, up to the point where $n^{3/2+\epsilon}$ edges remain. A system of martingales establishes concentration for these random variables around their analogous means in a random graph with corresponding edge density, and a key role is played by the self-correcting nature of the process. The lower bound builds on the estimates at that very point to show that the process will typically terminate with at least $n^{3/2-o(1)}$ edges left.
\end{abstract}
\maketitle
\vspace{-0.75cm}

\section{Introduction}

Consider the following well-known stochastic process for generating a triangle-free graph, and at the same time creating a partial Steiner triple system. Start from a
complete graph on $n$ vertices and proceed to repeatedly remove the edges of uniformly
chosen triangles. That is, letting $G(0)$ denote the initial graph, $G(i+1)$ is obtained from $G(i)$ by
selecting a triangle uniformly at random out of all triangles in $G(i)$ and deleting its 3 edges.
The process terminates once no triangles remain, and the fundamental question is to estimate the stopping time
\[ \tau_0 = \min\{ i : G(i)\mbox{ is triangle-free}\}\,.\]
This is equivalent to estimating the number of edges in the final triangle-free graph, since $G(i)$ has precisely $\binom{n}2 - 3i$ edges by definition. As the triangles removed are mutually edge-disjoint, this process is precisely the random greedy algorithm for triangle packing.

Bollob\'as and Erd\H{o}s (1990) conjectured that the expected number of edges in $G(\tau_0)$ has order $n^{3/2}$ (see, e.g.,~\cites{Bol1,Bol2}),
with the motivation of determining the Ramsey number $R(3,t)$. Behind this conjecture was the intuition that the graph $G(i)$ should be similar to an Erd\H{o}s-R\'enyi random graph with the same edge density.
Indeed, in the latter random graph with $n$ vertices and $\epsilon n^{3/2}$ edges there are typically about $\frac43 \epsilon^3 n^{3/2}$ triangles, thus, for small $\epsilon$, deleting all of its triangles one by one would still retain all but a negligible fraction of the edges.

It was shown by Spencer~\cite{Spencer} in 1995, and independently by R\"odl and Thoma~\cite{RT} in 1996,
that the final number of edges is $o(n^2)$ with high probability (w.h.p.).
In 1997, Grable~\cite{Grable} improved this
to an upper bound of $n^{11/6+o(1)}$ w.h.p., and further described
how similar arguments, using some more delicate calculations, should extend that result to $n^{7/4+o(1)}$.
This remained the best upper bound prior to this work. No
nontrivial lower bound was available.
(See~\cite{GKPS} for numerical simulations firmly supporting an answer of $n^{3/2+o(1)}$ to this problem.)

Of the various works studying generalizations and variants of the problem, we mention two here.
In his paper from 1999, Wormald~\cite{Wormald} demonstrated how the differential equation method
can yield a nontrivial upper bound on greedy packing of hyperedges in $k$-uniform hypergraphs.
For the special case $k=3$, corresponding to triangle packing, this translated to a bound of $n^{2-\frac{1}{57}+o(1)}$.
Also in the context of hypergraphs, Alon, Kim and Spencer~\cite{AKS} introduced in 1997 a semi-random variant of the aforementioned process
(akin to the R\"odl nibble~\cite{Rodl} yet with some key differences) which, they showed, finds nearly perfect matchings.
Specialized to our setting, that process would result in a collection of edge-disjoint triangles on $n$ vertices that covers all but $ O( n^{3/2} \log^{3/2} n)$ of the edges
of the complete graph. Alon \emph{et al}~\cite{AKS} then conjectured that the simple random greedy algorithm should match those results, and in particular
--- generalizing the Bollob\'as-Erd\H{o}s conjecture --- that
applying it to find a maximal collection of $k$-tuples with pairwise intersections at most $k-2$ would leave out an expected number of at most
$n^{k-1-\frac{1}{k-1}+o(1)}$ uncovered $(k-1)$-tuples. They added that ``at
the moment we cannot prove that this is the case even for $k = 3$'', the focus of our work here.

Joel Spencer offered \$200 for a proof that the answer to the problem is $n^{3/2+o(1)}$ w.h.p.\ (\cites{Grable,Wormald}).
The main result in this work establishes this precise statement, thus confirming the exponent conjectured by Bollob\'as and Erd\H{o}s (1990).

\begin{maintheorem}
\label{mainthm-1}
Let $\tau_0$ be the number of steps it takes the random triangle removal process to terminate starting from a complete graph on $n$ vertices,
and let $E(\tau_0)$ be the edge set of the final triangle-free graph. Then with high probability
$\tau_0 = n^2/6 - n^{3/2+o(1)}$, or equivalently, $|E(\tau_0)| = n^{3/2+o(1)}$.
\end{maintheorem}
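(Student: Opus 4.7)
The plan is to parameterize time by $t = 6i/n^2$ and compare $G(i)$ throughout to the Erdős--Rényi graph $G(n,p)$ at density $p = p(t) := 1-t$. Writing $Y_{uv}(i) = |N_{G(i)}(u)\cap N_{G(i)}(v)|$ for the codegree and $T(i)$ for the triangle count, the random-graph heuristic predicts $Y_{uv}(i)\approx(n-2)p^2$ and $T(i)\approx\binom{n}{3} p^3$, and identifies the natural stopping point as $p = \Theta(n^{-1/2})$, i.e., $|E| = n^{3/2+o(1)}$. Both halves of the theorem reduce to validating this heuristic essentially all the way to that point.

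\textbf{Upper bound.} Fix $\epsilon>0$ and let $\tau_\epsilon$ be the first step at which $|E(G(i))|=n^{3/2+\epsilon}$; the goal is to show $\tau_\epsilon<\tau_0$ with high probability, i.e., the process has triangles at every $i\le\tau_\epsilon$. The one-step drift of $T(i)$ is controlled by codegrees, whose drift is controlled by ``triangles through codegrees,'' and so on; closing this dependency chain requires tracking the homomorphism counts $X_H(i)$ from a family of rooted graphs $H$ obtained by sequentially gluing up to $k = O(1/\epsilon)$ triangles along shared edges, of which there are $\exp(O(k))$ shapes, for every pair of root vertices. For each $X_H$ I would compute its expected one-step change, express it as the drift predicted by the corresponding ODE on $G(n,p)$ plus an error, and apply a Freedman-type martingale inequality to the deviation variable. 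The decisive feature is that the drift is \emph{self-correcting}: an upward deviation of $X_H$ above its target value induces a negative expected change in the next step, and a downward deviation induces a positive one. This dissipative mechanism allows the relative error envelope to remain at $n^{-\Omega(\epsilon)}$ throughout the window $[0,\tau_\epsilon]$; in particular $T(\tau_\epsilon) = \Theta(n^{3/2+3\epsilon})\gg 1$, so the process is still alive at $\tau_\epsilon$.

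\textbf{Lower bound.} To obtain $|E(G(\tau_0))|\ge n^{3/2-\epsilon}$, I would continue the same tracking past $\tau_\epsilon$ down to the first time $\sigma$ at which $|E|=n^{3/2-\epsilon}$; the self-correction still operates, because the per-step edge-removal probability $Y_{uv}/T$ and the martingale jump sizes both remain well-controlled at the lower density. At time $\sigma$ (if it is reached) the estimates give $T(\sigma)=\Theta(n^{3/2-3\epsilon})\ll |E(G(\sigma))|$, so the process can destroy at most $n^{3/2-3\epsilon}$ further triangles and thus delete at most $3n^{3/2-3\epsilon}=o(n^{3/2-\epsilon})$ further edges. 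Consequently $|E(G(\tau_0))| \ge n^{3/2-\epsilon}-o(n^{3/2-\epsilon})$, and letting $\epsilon\to 0$ through a countable sequence yields the $n^{3/2+o(1)}$ matching lower bound.

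\textbf{Main obstacle.} The hardest step is running the differential-equation analysis across the entire window up to $\tau_\epsilon$, well past the classical DEM ``critical'' time $t = 1 - n^{-\Omega(1)}$ beyond which naive open-loop error propagation blows up. Three ingredients must cooperate: (i) identifying the minimal family of triangle-tree statistics whose ODE system closes on itself modulo admissible errors; (ii) designing per-statistic error envelopes whose shape actively encodes the self-correcting drift, so that local excursions are damped rather than compounded; and (iii) a martingale concentration bound robust against the large one-step jumps that occur whenever the chosen triangle meets a vertex of atypically high codegree. The combinatorial bookkeeping over the $\exp(O(1/\epsilon))$ triangle-tree shapes, interleaved with these analytic considerations, is what I expect to constitute the technical heart of the argument.
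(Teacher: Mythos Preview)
Your upper bound sketch matches the paper's approach: track homomorphism counts for a family of $\exp(O(1/\epsilon))$ graphs built by gluing triangles (the paper calls them \emph{triangular ladders}), exploit self-correction to keep relative errors at $n^{-\Omega(\epsilon)}$, and conclude that $Q$ is still $\Theta(n^{3/2+3\epsilon})$ when $|E|=n^{3/2+\epsilon}$. That part is fine.

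Your lower bound, however, has a genuine gap. You propose to continue the same tracking down to $|E|=n^{3/2-\epsilon}$ and then argue $T(\sigma)=\Theta(n^{3/2-3\epsilon})\ll|E(\sigma)|$. Both steps fail. First, the self-correcting analysis cannot be pushed past $p\approx n^{-1/2}$: the relative error scale is $\zeta=n^{-1/2}p^{-1}\log n$, which exceeds $1$ once $p<n^{-1/2}\log n$, and at $p=n^{-1/2-\epsilon}$ the target co-degree $np^2=n^{-2\epsilon}$ is below $1$, so concentration of $Y_{u,v}$ around $np^2$ is meaningless. Second, and more fundamentally, the triangle count does \emph{not} follow the $n^3p^3$ trajectory in this regime. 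A Cauchy--Schwarz computation on $\E[\Delta Q]=2-\tfrac1Q\sum Y_{u,v}^2$ shows that once $n^3p^3\ll n^2p$ the correct upper bound is $Q\le(\tfrac13+o(1))|E|$, not $o(|E|)$; heuristically, when co-degrees are mostly $0$ or $1$ each step removes one triangle and three edges, so $Q/|E|$ stabilizes near $1/3$. Thus at time $\sigma$ you would have $T(\sigma)\approx|E(\sigma)|/3$, and the ``at most $3T(\sigma)$ further edges removed'' argument gives nothing.

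The paper's lower bound is a genuinely different idea. Using only the co-degree control available at $p_0=n^{-1/2+\epsilon}$, one runs to a slightly later time $p_1=n^{-1/2}/\log n$ where $Q\approx|E|/3$ forces almost all surviving triangles to be pairwise edge-disjoint. For each such triangle $uvw$ there was an earlier moment when one of its edges, say $uv$, first became exclusive to $uvw$ while $uvw$ still had some neighbor $xyz$; by considering the uniform ordering on $Q(i_0)$ that drives the process, the event that $xyz$ precedes all of its $\le n^{2\epsilon+o(1)}$ neighbors has probability $\ge n^{-2\epsilon-o(1)}$ and guarantees $uv$ survives to the end. A second-neighborhood exclusion makes these events independent across $n^{3/2-4\epsilon-o(1)}$ such triangles, yielding $n^{3/2-6\epsilon-o(1)}$ surviving edges. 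This local-ordering trick is the missing ingredient in your proposal.
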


We prove Theorem~\ref{mainthm-1} by showing that w.h.p.\ all variables in a
collection of $e^{O(1/\epsilon)} n^2$ random variables, carefully designed to support the analysis,
 stay close to their respective trajectories throughout the evolution of this stochastic hypergraph process (see~\S\ref{sec:methods} for details).
%This is in contrast to other stochastic hypergraph processes, where a successful analysis can be carried out by tracking but a few types of functionals
%(in our setting such approaches seem to break well before the $3/2$ exponent). 
Establishing concentration for this large collection of variables hinges on their self-correction nature: the 
further a variable deviates from its trajectory, the stronger its drift is back toward its mean.
However, turning this into a rigorous proof is quite challenging given that the various 
variables interact and errors (deviations from the mean) propagating from 
other variables may interfere in the attempt of one variable to correct itself. 
We construct a system of martingales to guarantee that the drift of a variable towards its mean 
will dominate the errors in our estimates for its peers.

The tools developed here for proving such self-correcting estimates are generic and we believe they
will find applications in other settings. In particular, these methods should support
an analysis of the random greedy hypergraph matching process as well as that of the asymptotic 
final number of edges in the graph produced by the triangle-free process (see~\S\ref{sec:triangle-free}). Progress on the latter process would
likely yield improvement on the best known lower bound on the Ramsey number $R(3,t)$.

\subsection{Methods}\label{sec:methods}
Our starting point for the upper bound is a system of martingales for
dynamically tracking an ensemble of random variables consisting of the triangle count and all
co-degrees in the graph. The self-correcting behavior of these variables is roughly seen as follows: should the co-degree
of a given pair of vertices deviate above/below its average, then more/fewer than average triangles could shrink
the co-neighborhood if selected for removal in the next round, and this compensation effect would eventually drive the variable back towards its mean.
One can exploit this effect to maintain the concentration of all these variables around their analogous means in a corresponding Erd\H{os}-R\'enyi random graph, as long as the number of
edges remains above $n^{7/4+o(1)}$. (In the short note~\cite{BFL} the authors applied this argument to match this upper bound due to Grable.)

It is no coincidence that various methods break precisely at the exponent 7/4 as it corresponds to the inherent barrier where
co-degrees become comparable to the variations in their values that arose earlier in the process.
%New ideas are needed
In order to carry out the analysis beyond this barrier, one can for instance enrich the ensemble of tracked variables to address the second level neighborhoods
of pairs of vertices. This way one can avoid cumulative worst-case individual errors due
to large summations of co-degree variables, en route to co-degree estimates which \emph{improve} as the process evolves.
Indeed, these ideas can push this framework to successfully track the process to the point where $n^{5/3+o(1)}$ edges remain.
However, beyond that point (corresponding to an edge density of about $n^{-1/3}$) the size of common neighborhoods of triples would become negligible, foiling the analysis.
This example shows the benefit of maintaining control over a large family of subgraphs, yet at the same time it demonstrates how for any family of \emph{bounded size} subgraphs the framework will eventually collapse.

%The reason this approach eventually breaks down at the exponent $2-\frac1{2\sqrt{2}} \approx 1.65$ lies in the delicate dependencies between
%the various random variables in this ensemble (referred to as the \emph{co-degree ensemble}).
%As it turns out, there is a cyclic dependence between the error terms for the various types of random variables under consideration here.
%In order to facilitate a system of appropriate supermartingales, we must choose the leading order constants
%such that they would satisfy a certain system of linear constraints, which are feasible
%only above the mentioned exponent.

%The substantial ingredient in the proof of the upper bound is an argument aimed to eliminate the bottleneck in the analysis of the co-degrees ensemble
%and allow it to carry all the way to $n^{3/2+o(1)}$ edges.
%%We single out one of the types of random variables in this ensemble --- namely, the number of triangles
%%incident to each given vertex --- such that a refined estimate for those variables would turn the aforementioned system of linear constraints
%%into an underdetermined system. Consequently,
%As long as we can provide estimates of higher precision for these random variables via an auxiliary framework,
%we will also be able to maintain the co-degree approximation.
%Of course, this merely reduces the main problem to that of estimating the number of triangles incident to each vertex.
%Yet crucially, we are entitled to use fairly precise co-degree estimates to this end, as long as the outcome features an estimate to a certain finer precision.

\begin{figure}
\centering
\fbox{
\begin{tabular}{c}
\includegraphics[width=0.95\textwidth]{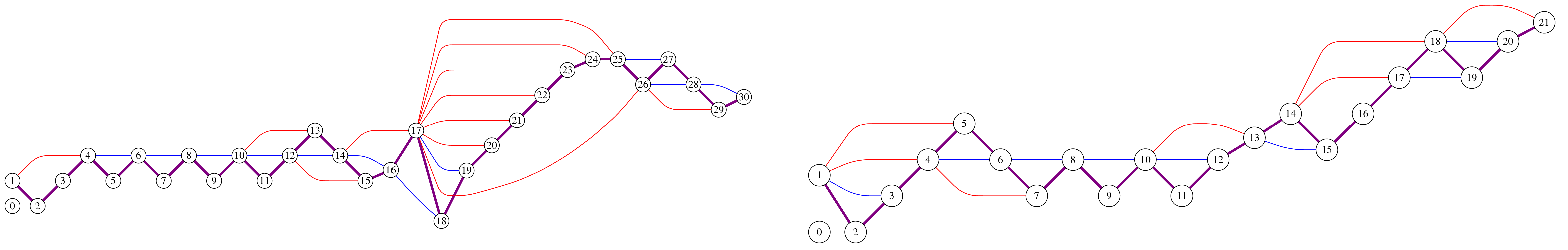} \\
\hline \\
\includegraphics[width=0.95\textwidth]{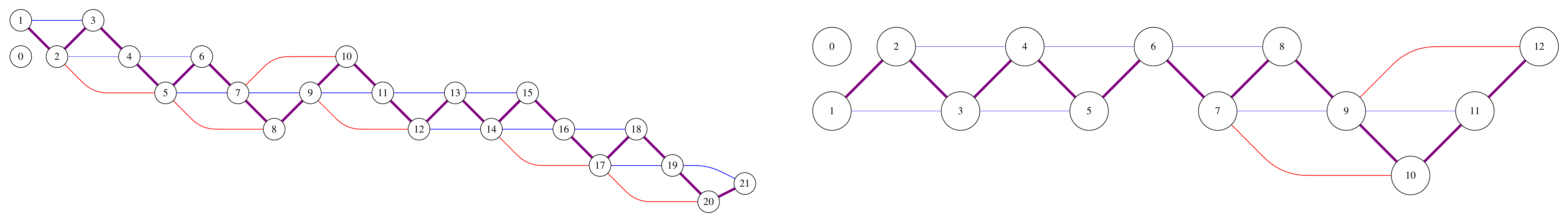}
\end{tabular}
}
%\vspace{-0.1cm}
\caption{A subset of the ensemble of about $2^{30}$ triangular ladders that are tracked for any pair of roots
to sustain the analysis of the upper bound to the point of $n^{3/2+\epsilon}$ edges for $\epsilon=\frac1{10}$.
}
\label{fig:M10}
%\vspace{-0.5cm}
\end{figure}

In order to prove the $n^{3/2+o(1)}$ upper bound we track an \emph{arbitrarily large} collection of random variables.
A natural prerequisite for understanding the evolution of the number of homomorphisms from any small subgraph $H$
to $G(i)$ rooted at a given pair of vertices $u,v$ is to have ample control over homomorphisms from subgraphs obtained by ``gluing a triangle'' to an edge to $H$ (as the number
of those dictates the probability of losing copies of $H$ in the next round on account of losing the mentioned edge). Tracking these would involve
additional larger subgraphs etc., and to end this cascade and obtain a system of variables which can self-sustain
itself we do the following. To estimate the probability of losing the ``last edge'' in the largest tracked $H$, in lieu of
counting homomorphisms from a ``forward extension'' (the result of gluing a new triangle onto the given edge)
rooted at $u,v$, we fix our candidate for the edge
in $G(i)$  that would play the role of this last edge and then count homomorphisms rooted at $u,v$ and featuring this specific edge. In other words,
we replace the requirement for control over ``forward extensions'' with control over ``backward extensions'' from any given edge in $G(i)$.
Obviously, rooting the subgraphs at as many as 4 vertices can drastically decrease the expected number of homomorphisms, say to $O(1)$, while our analysis must
involve error probabilities that tend to $0$ at a super-polynomial rate to account for our polynomial number of variables.

These points lead to a careful definition of the ensemble of graphs we wish to track. We construct what we refer to as \emph{triangular ladders}
which, roughly put, are formed by repeatedly taking an existing element in the ensemble and gluing
a triangle on one of its last added two edges. We do so until reaching a certain threshold on the number of vertices,
namely $O(1/\epsilon)$ for some fixed $\epsilon > 0$, amounting to a family of $2^{O(1/\epsilon)}$ graphs (see Figure~\ref{fig:M10}).
Special care needs to be taken so as to avoid certain substructures that would foil our analysis,
yet ignoring these for the moment (postponing precise definitions to \S\ref{sec:ladders}), note that each new step of adding a triangle (a new vertex and two new edges) also
increases the density of the ladder under consideration, as long as the edge density is at least $n^{-1/2+\epsilon}$.
When examining a given ladder in this ensemble, its forward extensions
will also be in the ensemble, by construction, unless we exceed the size threshold. In the latter case, if the threshold is taken to be suitably large,
we can safely appeal to a backward extension argument and wind up with polynomially many homomorphisms to $G(i)$.

The main challenge in this approach is to come up with a canonical definition of the ensemble such that
a uniform analysis could be applied to its arbitrarily many elements.
In particular, since as usual the errors in one variable propagate to others, we end up with a system of linear constraints on $2^{O(1/\epsilon)}$
variables. Fortunately, suitable canonical definitions of the main term in each variable as a function of its predecessor in the ensemble make
this constraint system one without cyclic dependencies, allowing for a simple solution to it. Altogether, the triangular ladder ensemble can be
maintained as long as there are $n^{3/2+\epsilon}$ edges, and while so it sustains the analysis of the number of triangles.

The lower bound builds on the fact that our analysis of the upper bound yielded asymptotic estimates for all co-degrees up to the point
of $n^{3/2+\epsilon}$ edges, for arbitrarily small $\epsilon>0$. Already a weaker bound on the co-degrees, when combined with an analysis
of the evolution of the number of triangles, suffices to show that either the final number of edges has order at least $n^{3/2}$
or at some point $i$ there are $|E(i)|\asymp n^{3/2}$ edges and $(\frac13-o(1))|E(i)|$ edge-disjoint triangles. These give rise to a subset of $n^{3/2-O(\epsilon)}$ edge-disjoint triangles
that existed earlier in the process, such that each one could guarantee an edge to survive in the final graph with probability $n^{-O(\epsilon)}$, and these events are mutually independent. Altogether
this leads to at least $n^{3/2-O(\epsilon)}$ edges in the final graph w.h.p.

\subsection{Comparison with the triangle-free process}\label{sec:triangle-free}

A different recipe for obtaining a random triangle-free graph is the so-called ``triangle-free process''. In that process, the $\binom{n}2$ edges
arrive one by one according to a uniformly chosen permutation, and each one is added if and only if no triangles are formed with it in the current graph. It was shown in~\cite{ESW} that the final number of edges in this process is w.h.p.\ $n^{3/2+o(1)}$, and the correct order of $n^{3/2}\sqrt{\log n}$, along with its ramifications on the Ramsey number $R(3,t)$, was recovered in~\cite{Bohman} (see also~\cite{BK} for generalizations).

Similarly to that process, the triangle removal process studied here can be presented as taking a uniform ordering of the $\binom{n}3$ triangles, then sequentially removing the edges of a triangle if and only if all 3 edges presently belong to the graph. The main result in this work shows that this process too culminates in a triangle-free graph with $n^{3/2+o(1)}$ edges.

Despite the high level similarity between the two protocols, the triangle removal process has proven quite challenging already
at the level of acquiring the correct exponent of the final number of edges. One easily seen difference is that, in the triangle-free process, at any given point there is a set of forbidden edges (whose addition would form at least one triangle), yet regardless of its structure the next edge to be added is uniformly distributed over the legal ones. As long as the forbidden set of edges is negligible compared to the remaining legal edges, this process therefore mostly adds uniform edges just as in the Erd\H{o}s-R\'enyi random graph. In particular, this is the case as long as $o(n^{3/2})$ edges were added (even based on all the edges that arrived rather than those selected). A coupling to the Erd\H{o}s-R\'enyi random graph up to that point immediately gives a lower bound of order $n^{3/2}$ on the expected number of edges at the end of the triangle-free process.
One could hope for an analogous soft argument to hold for the triangle removal process, where it would translate to an upper bound (as it deletes rather than adds edges).

However, in the triangle removal process studied here, deleting almost all edges save for $n^{3/2+o(1)}$ forms a substantial challenge for the analysis. Already when the edge density is a small constant, still bounded away from $0$, the set of forbidden triangles becomes much larger than the set of legal ones. Thus, in an attempt to analyze the process even up to a density of $n^{-\epsilon}$ for some $\epsilon>0$, one is forced to control the geometry of the remaining triangles quite delicately from its very beginning.

%\subsection{Organization}
%The rest of this paper is organized as follows.
%In \S\ref{sec:upper-bound} we give the upper bound for the final number of edges modulo two intertwined estimates,
%arising from each of the ensembles of random variables. In \S\ref{sec:extension} we introduce a framework
%for counting homomorphisms rooted at certain subsets of vertices, and establish concentration for these given the
%approximation guarantees of the co-degree ensemble. In \S\ref{sec:ladders} we define the triangular ladder ensemble, as well
%as the notions of forward and backward extensions of these graphs. These are analyzed in \S\ref{sec:tracking-ladders}
%en route to establishing concentration for the number of homomorphisms from any triangular ladder in our ensemble.
%In \S\ref{sec:Q-ensemble} we analyze
%the co-degree ensemble whose estimates support the analysis of the triangular ladder ensemble.
%The last section, Section~\ref{sec:lower}, is devoted to the lower bound on the final number of edges.

%%%%%%%%%%%%%%%%%%%%%%%%%%%%%%%%%%%%%%%%%%%%%%%%%%%%%%%%%%%%%%
%%%
%%% Upper bound modulo two ensembles
%%%
%%%%%%%%%%%%%%%%%%%%%%%%%%%%%%%%%%%%%%%%%%%%%%%%%%%%%%%%%%%%%%

\section{Upper bound on the final number of edges modulo co-degree estimates}\label{sec:upper-bound}

Let $ (\cF_i) $ be the filtration given by the process.
Let $ N_u = N_u(i) = \{ x \in V_G : x u \in E(i) \}$ denote the neighborhood of a vertex $u\in V_G$, let $N_{u,v} = N_u \cap N_v$
be the co-neighborhood of vertices $u,v\in V_G$ and let $Y_{u,v} =|N_{u,v}|$ denote their co-degree.
Our goal is to estimate the number of triangles in $ G(i) $ which we denote by $Q(i)$.
The motivation behind tracking co-degrees is immediate from the fact that $ Q(i) = \frac13 \sum_{uv \in E(i)} Y_{u,v}$,
yet another important element is their effect on the evolution of $Q$. By definition,
\begin{align}
\E[\D Q \mid \cF_i] &= - \sum_{ uvw \in Q} \left( Y_{u,v} + Y_{u,w} + Y_{v,w} - 2\right)/Q\,, \label{eq-Q-one-step}
\end{align}
where here and in what follows $\D X = X(i+1)-X(i)$ is the one-step change in the variable $X$.

It is convenient to re-scale the number of steps $i$ and introduce a notion of edge density as follows:
\begin{align}
  \label{eq-p-def}
t=t(i)=\frac{i}{n^2}\,,\qquad & p = p(i,n) = 1 - \frac{6i}{n^2} = 1 - 6t\,.
\end{align}
With this notation we have
\begin{align}
  \label{eq-E-val}
  |E(i)| = \binom{n}2 - 3i = \binom{n}2 - \frac12 (1-p)n^2 = \frac12( n^2 p - n)\,,
\end{align}
and we see that, up to the negligible linear term, the number of edges corresponding to the edge density $p$
is in line with that of the Erd\H{o}s-R\'enyi random graph $\cG(n,p)$.
Under the assumption that $G(i)$ indeed resembles $\cG(n,p)$ we expect that the number of triangles and co-degrees would satisfy
\[ Y_{u,v} \approx  n p^2 \qquad \mbox{ and }\qquad Q \approx \frac16 n^3 p^3\,. \]
Turning to the evolution of the co-degrees, we see that
\begin{align}
\E[ \D Y_{u,v} \mid \cF_i ] & = - \sum_{x \in N_{u,v}} \frac{ Y_{u,x} + Y_{v,x} - \one_{\{uv \in E\}}}{ Q} \,,\label{eq-Y-one-step}
\end{align}
and so tight estimates on the co-degrees and $Q$ can already support the analysis of the process.
In the short note~\cite{BFL} the authors relied on these variables alone to establish a bound
of $O(n^{7/4}\log^{5/4}n)$ on the number of edges that survive to the conclusion of the
algorithm. However, once the edge density drops to $p=n^{-1/4}$ we arrive at $Y_{u,v}\approx n^{1/2}$.
As the variables $Y_{u,v}$ have order $n$ and standard deviations of order $\sqrt{n}$
while $p$ is constant, we see that at
the point $p=n^{-1/4}$ the deviations that developed early
in the process cease to be regarded as negligible and a refined analysis is required.

One can achieve better precision for the co-degrees by introducing additional variables
aimed at \emph{decreasing} the variation for $Y_{u,v}$ as the process evolves. For instance,
through estimates on the number of edges between the common and exclusive neighborhoods for every pair of vertices,
along with some additional ingredients exploiting the self-correction phenomenon,
one can gain an improvement on the $7/4$ exponent. However, eventually these arguments break down, soon after
some bounded-size subgraphs becomes too rare, thus foiling the concentration estimates.

We remedy this via the triangular ladder ensemble, defined in \S\ref{sec:ladders} and analyzed in \S\ref{sec:tracking-ladders}.
To sustain the analysis up to the point where $n^{3/2+\epsilon}$ edges remain we must simultaneously control $\exp(O(1/\epsilon))$ types of random variables, yet our ultimate goal in
those sections is simply to estimate all co-degrees $\{ Y_{uv} : u,v\in V_G\}$. The following corollary is a special case of Theorem~\ref{thm:ladders-concentrate}.
\begin{theorem}\label{thm:aux}
Define $\zeta=\zeta(p,n)$ to be
\begin{equation}
 \label{eq-zeta-def}
\zeta  = n^{-1/2} p^{-1} \log n
\end{equation}
and for some (arbitrarily large) absolute constant $\kappa>0$ let
\[\tau_{\textsc{q}}^* = \min\left\{ t : \left| Q/(\tfrac16 n^3 p^3)-1\right| \geq \kappa\, \zeta^2 \right\}\,.\]
Then for every $M\geq 3$ w.h.p.\
\begin{align*}
\left|Y_{u,v}/(np^2) - 1 \right| &\leq 3^{3M-1}\,\zeta
\end{align*}
for every $u,v\in V_G$ and all $t$ such that $t \leq \tau_{\textsc{q}}^*$ and $p(t)\geq n^{-1/2+1/M}$.
\end{theorem}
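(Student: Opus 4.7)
The plan is to derive Theorem~\ref{thm:aux} from the stronger concentration result Theorem~\ref{thm:ladders-concentrate} for the full triangular ladder ensemble. For a fixed $M\geq 3$, I would build the ensemble $\mathcal{H}_M$ of ladders of depth at most $M$, where the depth-$0$ object is a single edge rooted at $u,v$ (so that the number of rooted homomorphisms into $G(i)$ equals $Y_{u,v}$) and each deeper ladder is obtained by gluing a new triangle onto one of the two most recently added edges of a shallower one. Then simultaneously track, for every $H\in\mathcal{H}_M$ and every root pair $u,v$, the random variable $X_H$ counting rooted homomorphisms of $H$ into $G(i)$. The cardinality $|\mathcal{H}_M|=2^{O(M)}$ matches the ensemble sketched around Figure~\ref{fig:M10}, and the $Y_{u,v}$ bound in the theorem is the depth-$0$ instance.

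For each $H$ assign the trajectory $\bar X_H = n^{v_H-2}p^{e_H}$ (the analogous expectation in $\cG(n,p)$ at the matching density) and an envelope of the form $c_{\mathrm{depth}(H)}\,\zeta^{\alpha(H)}\,\bar X_H$, where the constants $c_k$ are to be chosen and $\alpha(H)$ is an integer reflecting the concentration level of $H$ (so $\alpha=1$ at the co-degrees, matching the statement, and $\alpha=2$ at the triangle count, as is already built into the hypothesis $|Q/(\tfrac16 n^3 p^3)-1|<\kappa\zeta^2$). The key structural computation is the one-step expectation $\E[\Delta X_H\mid\cF_i]$, obtained by summing, over the triangles $T$ whose removal destroys a copy of $H$, the fraction $1/Q$: each such contribution rewrites as a homomorphism count of a forward extension $H'$ gotten by gluing a triangle onto the pertinent edge of $H$. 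If $\mathrm{depth}(H)<M$ then $H'\in\mathcal{H}_M$ by construction and $X_{H'}$ is itself tracked; if $\mathrm{depth}(H)=M$ one invokes the backward-extension trick from \S\ref{sec:methods}, fixing a candidate edge in $G(i)$ to play the role of the last-glued edge of $H'$ and counting rooted homomorphisms of $H$ through it, which yields a deterministic polynomial bound sufficient to close the cascade.

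The engine of the proof is self-correction. Writing $X_H = \bar X_H(1+\delta_H)$, the drift $\E[\Delta X_H\mid\cF_i]$ decomposes as the trajectory derivative $\partial_t \bar X_H$, plus a restoring term proportional to $-\delta_H\,|\partial_t \bar X_H|$ (an excess in $X_H$ directly raises the chance that a random triangle destroys a copy of $H$), plus peer contributions involving the $\delta_{H'}$ of the forward extensions. Choosing the $c_k$ so that in the worst case on all peers the peer contributions are dominated by the restoring drift whenever $p\geq n^{-1/2+1/M}$ and $t\leq \tau_{\textsc{q}}^*$ produces a system of linear inequalities on $c_0,\ldots,c_M$ with no cyclic dependencies (a forward extension has strictly larger depth), and one may solve it with $c_0 = 3^{3M-1}$, matching the constant in the theorem. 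For each $X_H$ the centered, drift-corrected process is then a supermartingale to which a Freedman-type inequality applies; since each step moves $X_H$ by $O(\bar X_H/(np^2))$ and the cumulative quadratic variation is small, the standard calculation yields super-polynomial concentration, which survives a union bound over the $n^{O(1)}2^{O(M)}$ tracked variables.

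The main obstacle is closing the cascade quantitatively. The restoring drift is only a $\zeta$-fraction of $\partial_t \bar X_H$, so each forward extension's peer budget is tight, and the factor $3^3$ per depth is exactly what the ``trajectory derivative versus peer envelopes'' arithmetic forces; an improperly designed ensemble, for instance one allowing ladders whose forward extensions share too many vertices, would break this balance, and this is precisely why the ladders of \S\ref{sec:ladders} must avoid certain degenerate substructures and why the main term $\bar X_H$ must be defined as a canonical function of the counts of the predecessors of $H$ so that the resulting constraint system is strictly triangular. A secondary difficulty is that the backward-extension step at depth $M$ yields only a polynomial, not asymptotic, bound on the corresponding drift contribution, so the induction on depth must tolerate worst-case estimates at its terminal level; this is the source of the trade-off in the theorem, whereby taking $M$ larger pushes the analysis further (to $p\geq n^{-1/2+1/M}$) at the cost of a looser envelope $3^{3M-1}\zeta$.
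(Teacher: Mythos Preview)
Your proposal is essentially the paper's own approach: Theorem~\ref{thm:aux} is stated there as the $\pi=1$ special case of Theorem~\ref{thm:ladders-concentrate}, and your sketch correctly identifies all the key ingredients (the ladder ensemble closed under forward extension, the backward-extension trick at maximal length, the self-correcting supermartingale with Freedman's inequality, and the acyclic constraint system on the envelope constants $\omega_\pi$ that solves to $3^{3M-1}$ at the base). Two small clarifications: the base object giving $Y_{u,v}$ is not a single rooted edge but the cherry $\sL_1$ on vertices $\{0,1,2\}$ with edges $02,12$ and roots $0,1$; and in the paper the envelope power of $\zeta$ is uniformly $1$ across the ensemble (only the constants $\omega_\pi=3^{3M-|\pi|-(M-1)\mathfrak C_\pi}$ vary), with the ladder lengths going up to $3M-1$, $2M$, or $M+1$ depending on the number of $\texttt{e}$ symbols rather than a uniform depth $M$.
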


The heart of this work is in establishing the above theorem (Sections~\ref{sec:extension}--\ref{sec:tracking-ladders}), which is complemented by the following result: A multiplicative error of $1+O(\zeta)$ for all co-degree variables $Y_{u,v}$ can be enhanced to
bounds tight up to a factor of $1+O(\zeta^2)$ on the number of triangles $Q$.
\begin{theorem}
\label{thm:Q}
Set $ \Phi(p,n) = p^{-2/\log n} \log n$ and for some fixed $\alpha>0$ let
\begin{align}
\tau^*_{\textsc{y}} = \min\left\{ t \;:\; \exists u,v\mbox{ such that }\left| Y_{u,v} - n p^2 \right| > \alpha n^{1/2} p \,\Phi  \right\} \label{Yuvbound}\,.
\end{align}
Then w.h.p.\ as long as $t\leq \tau^*_{\textsc{y}}$ and $p(t) \geq n^{-1/2} \log^2 n$ we have
\begin{align}
\big| Q - n^3p^3/6 \big| &\leq \alpha^2 n^2 p \,\Phi^2 \label{Qbound}\,.
\end{align}
\end{theorem}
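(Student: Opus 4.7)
The plan is to uncover a hidden self-correcting drift for $\L := Q - \hat Q$, where $\hat Q(i) := n^3 p^3/6$, and then drive a two-sided envelope argument. Starting from \eqref{eq-Q-one-step}, the identity
\[
\sum_{uvw \in Q}(Y_{u,v}+Y_{u,w}+Y_{v,w}) \;=\; \sum_{e \in E}Y_e^2,
\]
which holds because each edge $uv$ sits in exactly $Y_{u,v}$ triangles, rewrites the drift as $\E[\D Q \mid \cF_i] = -\tfrac{1}{Q}\sum_e Y_e^2 + 2$. On the event $\{t \le \tau^*_{\textsc{y}}\}$, write $Y_e = np^2 + \d_e$ with $|\d_e| \le \a n^{1/2} p\, \Phi$ and use $\sum_e Y_e = 3Q$ to expand
\[
\sum_e Y_e^2 \;=\; 6 np^2 Q \;-\; |E|(np^2)^2 \;+\; \sum_e \d_e^2.
\]
Plugging $|E| = \tfrac12(n^2p-n)$ and Taylor-expanding $1/Q$ around $\hat Q$ to first order in $\L$ gives
\[
\E[\D Q \mid \cF_i] \;=\; -3np^2 \;-\; \frac{18\,\L}{n^2 p} \;+\; O(\a^2 \Phi^2),
\]
and subtracting $\D \hat Q = -3np^2 + O(p/n)$ yields the key identity
\[
\E[\D \L \mid \cF_i] \;=\; -\frac{18\,\L}{n^2 p} \;+\; O(\a^2 \Phi^2).
\]
Thus $\L$ evolves as a mean-reverting random walk with contraction rate $18/(n^2 p)$ per step and noise $O(\a^2 \Phi^2)$, whose equilibrium scale is exactly $\Theta(\a^2 n^2 p\, \Phi^2)$ --- matching the target bound.

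I would then run a two-sided stopped supermartingale argument with envelope $g(i) := C\a^2 n^2 p(i)\,\Phi(i)^2$ for a sufficiently large absolute constant $C$. Since $\D p = -6/n^2$ and $\D \Phi^2 / \Phi^2 = O((n^2 p \log n)^{-1})$, one finds $\D g = -6 C\a^2 \Phi^2 + o(\a^2 \Phi^2)$; hence on $\{\L \ge g\}$ the self-correction estimate gives
\[
\E[\D(\L - g) \mid \cF_i] \;\le\; (-18 C + 6 C + O(1))\a^2 \Phi^2 \;\le\; 0,
\]
and a mirror computation yields $\E[\D(-\L - g)\mid \cF_i] \le 0$ on $\{\L \le -g\}$. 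Stopping each of $\L - g$ and $-\L - g$ at its first crossing of $0$ (or at $\tau^*_{\textsc{y}}$) --- exploiting the slack zone $\{|\L|\in (g/2, g)\}$, where the contraction already dominates the noise so that the stopped process is a genuine supermartingale --- I would then apply Freedman's martingale inequality. Each triangle removal changes $Q$ by at most $Y_{u,v}+Y_{u,w}+Y_{v,w}-2 = O(np^2)$ on $\{t \le \tau^*_{\textsc{y}}\}$, so $|\D \L| = O(np^2)$ and the cumulative predictable quadratic variation up to any step $i \le n^2$ is $O(n^4)$; with initial gap $g(0) = \Theta(\a^2 n^2 \log^2 n)$ and $p \ge n^{-1/2}\log^2 n$, the Freedman ratio $g(0)^2/V$ is $\Omega(\log^4 n)$, yielding exit probabilities $\exp(-\Omega(\log^4 n))$ on each side that comfortably survive a union bound over $i$ and the two sides.

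I expect the main obstacle to be the clean algebraic isolation of the exact damping coefficient $-18\L/(n^2 p)$: the coefficient $18$ arises from a subtle cancellation between the $6np^2 Q$ piece and the constant $|E|(np^2)^2$ term once $1/Q$ is Taylor-expanded around $\hat Q$, the two leading $-3np^2$ contributions must match, and the next-order term has to survive with exactly the right sign and magnitude. Getting the constant right is essential, because the shrinkage of the envelope $\D g \approx -6 C \a^2 \Phi^2$ must be strictly dominated by the contraction $18 g/(n^2 p) = 18 C \a^2 \Phi^2$; any off-by-a-constant in the expansion would break the supermartingale condition. Once the identity is in hand, the rest is a standard envelope / Freedman bookkeeping, facilitated by the fact that $\Phi$ changes only at rate $1/\log n$, so $g$ varies slowly compared to the contraction it has to beat.
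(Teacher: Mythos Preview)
Your approach is essentially the same as the paper's --- identify the self-correcting drift $\E[\Delta \L\mid\cF_i]\approx -\tfrac{18}{n^2p}\L + O(\a^2\Phi^2)$, set up a supermartingale against the envelope $\a^2 n^2 p\Phi^2$, and apply a concentration inequality inside a critical window. Your direct expansion $\sum_e Y_e^2 = 6np^2 Q - |E|(np^2)^2 + \sum_e\d_e^2$ is in fact slightly cleaner than the paper's route through the auxiliary lemma $\sum a_i^2\le (\sum a_i)^2/m + 4m\delta^2$, and gives a tighter noise constant ($3\a^2\Phi^2$ rather than $12\a^2\Phi^2$).

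That said, two points need repair. First, the theorem as stated demands the envelope with constant exactly $\a^2$, not $C\a^2$ for large $C$; your ``sufficiently large $C$'' does not prove what is asked. The paper gets $C=1$ by taking a critical interval of relative width only $1/\log n$ and exploiting the slight extra decay in $\Phi^2=p^{-4/\log n}\log^2 n$ (which contributes the needed $-24/\log n$ in $\Delta g$). With your tighter noise constant $3$ you can actually afford $C=1$ directly with a critical interval of constant relative width, but you must track the constant rather than hide it in an $O(\cdot)$.

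Second, your Lipschitz bound $|\Delta\L|=O(np^2)$ is too crude and makes the concentration step shaky as written: $\L-g$ is \emph{not} a global supermartingale (when $\L\ll 0$ its drift is strongly positive), so you must restart at each entry to the slack zone; but then you cannot use the global $g(0)$ and $V=O(n^4)$ as you do --- for a restart at $p_0$ the target deviation is only $\Theta(\a^2 n^2 p_0\Phi^2)$. The paper sidesteps this by the key observation that $\Delta Q = -(Y_{u,v}+Y_{u,w}+Y_{v,w})+2$ and $\Delta\hat Q=-3np^2+O(p/n)$ have their leading $3np^2$ terms cancel, so under the co-degree hypothesis $|\Delta\L|\le 3\a n^{1/2}p\,\Phi = O(\sqrt{n}\,p\log n)$. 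With this much tighter increment bound a straightforward Hoeffding from the restart time $i_0$ gives $\exp(-cn/p_0)\le e^{-cn}$, and no Freedman or variance tracking is needed.
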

\begin{proof}
Let
\[ X(i) = Q - \frac{n^3p^3}{6}\quad,\quad Z(i) = |X(i)| - \a^2 n^2 p\Phi^2 \,.\]
We define below a narrow {\em critical interval} $I_Q$ that has its upper
endpoint at the bound we aim to establish for $|X|$.  As long as $|X|$ lies in
this interval it is subject to a self-correcting drift, which 
turns out to be just enough to show that $ \E[ \Delta Z \mid \cF_i ] < 0$.  
That is, as long as $|X|\in I_Q$ the sequence $ Z(i)$ is a
supermartingale. Standard concentration estimates then imply that $|X|$ is unlikely
to ever cross this critical interval, leading to the desired estimate on $Q$. 

Define
\[ \Lambda = \log^{-3/2}n\]
and observe that for all $p\geq n^{-1/2}\log^2 n$
we have $n^{-1} p^{-2} \Phi^2 = O(1/\sqrt{\log n})$, whence as long as~\eqref{Qbound} is valid we have
$Q=(\frac16+O(\Lambda))n^3p^3$. This clearly holds at the beginning of the process, and in what follows
we may condition on this event as part of showing that the bound~\eqref{Qbound} will be maintained 
except with an error probability that tends to 0 at a super-polynomial rate.

We begin with estimates on the one-step expected changes of our variables. 
Recall that due to~\eqref{eq-Q-one-step} we have
\[ \E[\D Q \mid \cF_i] = - \sum_{ uvw \in Q} \frac{ Y_{u,v} + Y_{u,w} + Y_{v,w} - 2}{Q}
= 2 - \frac{1}{Q} \sum_{ u v \in E} Y_{u,v}^2\,.\]
To bound $\E[\D Q \mid \cF_i]$ we thus need an estimate on $\sum_{ u v \in E} Y_{u,v}^2$,
achieved by the next simple lemma.
\begin{lemma}\label{lem-ai^2}
Let $a_1,\ldots,a_m \in \R$ and suppose that $|a-a_i|\leq \delta$ for all $i$ and some $a\in\R$. Then
\[
\frac{\left(\sum_{i} a_i\right)^2}m \leq \sum_{i} a_i^2 \leq \frac{\left(\sum_{i} a_i\right)^2}{m} + 4m\delta^2\,.
\]
\end{lemma}
\begin{proof}
The lower bound is due to Cauchy-Schwarz. For the upper bound
fix $b=\frac1m \sum_i a_i$ and note that the assumption on the $a_i$'s implies that $|b-a_i|\leq 2\delta$ for all $i$.
Observe that the convex function $\sum_{i} a_i^2$
achieves its maximum over the convex set $\left\{\sum_{i} a_i= bm\right\}\cap\big(\bigcap_{i}\left\{|b-a_i|\leq 2\delta\right\}\big)$ at an extremal point where
$a_i=b\pm 2\delta$ for all $i$. Hence, $a_i=b-2\delta$ for $\lfloor m/2\rfloor$ indices, $a_i=b+2\delta$ for $\lfloor m/2\rfloor$ indices and
if $m$ is odd there is a single $a_i=b$. In particular
\begin{equation*}
\sum_{i=1}^ma_i^2\leq \lfloor m/2\rfloor (b-2\delta)^2+\lfloor m/2\rfloor (b+2\delta)^2 +b^2 \one_{\{m\,\equiv\, 1\,(\mathrm{mod}\, 2)\}}\leq mb^2+4m\delta^2\,.
  \qedhere
\end{equation*}
\end{proof}
Since $\sum_{u v\in E} Y_{u,v} = 3Q$ and  $| Y_{u,v} - np^2 | < \alpha n^{1/2}p \Phi$ by Eq.~\eqref{Yuvbound},
an application of this lemma together with the fact that $|E(i)| = n^2 p/2 - n/2$ gives that
\begin{align*}
\frac1Q \sum_{uv\in E} Y_{u,v}^2 &\geq \frac{9Q}{|E|} \geq \frac{18 Q}{n^2p}\,,
\end{align*}
together with the following upper bound:
\begin{align*}
\frac1Q \sum_{uv\in E} Y_{u,v}^2 &\leq \frac{9Q}{|E|} + 4\frac{|E|}{Q} \a^2 n p^{2}\Phi^2 \leq \frac{18 Q}{n^2p\big(1-\frac1{np}\big)}+
\frac{2n^2 p}{\big(\frac16+O(\L)\big)n^3p^3} \a^2 n p^{2}\Phi^2 \\
&\leq  \frac{18 Q}{n^2p}+ O(p)+ (12+O(\L))\a^2 \Phi^2
= \frac{18 Q}{n^2p}+ (12+O(\L))\a^2 \Phi^2
\,.
 \end{align*}
(In the last equality we absorbed the $O(p)$-term into the $O(\L)$ error-term factor of the last expression
 since $\L \Phi^2\to\infty$.) Adding this to our estimate for $\E[\D Q \mid \cF_i]$
 (where again the additive 2 may be absorbed in our error term) while observing that $\D (-\frac16 n^3p^3) = 3np^2 + O(p/n)$ yields
 \[ \bigg| \E[\D X \mid \cF_i] - \frac{18}{n^2 p} \big( - Q + \tfrac16 n^3p^3\big) \bigg| \leq  (12+O(\L))\a^2 \Phi^2 + O(p/n) = (12+O(\L))\a^2 \Phi^2\,,\]
again incorporating the $O(p/n)$ term into the $O(\L)$ error. Thus, by the definition of $X$
 \[ \E\left[\D |X| \;\big|\; \cF_i\right] \leq -\frac{18 }{n^2p}|X| + (12+O(\L))\a^2 \Phi^2 \,.\]
Now assume that $i_0$ is the first round where $|X|$ raises above $ \ha^2 n^{2} p \Phi^2$, i.e., it enters the interval
\[
I_Q = \left( \ha^2 n^{2} p \Phi^2~,~\a^2 n^{2} p \Phi^2 \right) \qquad \mbox{ where } \qquad \ha = \left( 1- \log^{-1} n \right)^{1/2} \a\,.
\]
Further let 
\[\tau = \min\{i>i_0: |X(i)| < \ha^2 n^{2} p \Phi^2 \}\,.\]
Since $\D\left(-\a^2 n^{2} p \Phi^2\right) = \big(6+O(\frac{1}{n^2 p})\big) (1-\frac{4}{\log n}) \a^2 \Phi^2 $
 and the error-term $O(\frac{1}{n^2p})$ can easily be increased to $O(1/\Lambda)$ as $p\geq 1/n$ (with much room to spare),
 the upper bound on $\D|X|$ gives
\[\E\left[ \D Z\mid \cF_i\,,\,\tau>i\right] \leq
\big[-3(\ha/\a)^2 +2 + (1-\tfrac{4}{\log n}) + O(\L)\big]\,6\a^2 \Phi^2  \leq -\frac{6-o(1)}{\log n}\a^2 \Phi^2\,.
\]
It follows that $(Z(j \wedge \tau))_{j>i_0}$ is indeed a supermartingale for large $n$.

Next consider the one-step variation of $Z$. Denoting the selected triangle in a given round by $uvw$, the change in $Q$ following this round
is at most $Y_{u,v}+Y_{u,w}+Y_{v,w}$ and in light of our co-degree estimate~\eqref{Yuvbound} this expression deviates from its expected value
of $3np^2$ by at most $3\a n^{1/2}p \Phi$.
In particular, $|\D Z| = O\big( \sqrt{n} p_0 \log n \big)$ and letting $p_0 = p(i_0) = 1 - 6i_0/n^2$ this ensures that
\[ Z(i_0) \leq (\ha^2-\a^2) n^{2} p_0 \Phi^2  + O\big( \sqrt{n} p_0 \log n \big) \leq -\tfrac12 \a^2 n^{2} p_0 \log n \,,\]
where the last inequality holds for large $n$. With at most $n^2 p_0$ steps remaining until the process terminates, Hoeffding's inequality establishes that for some fixed $c>0$,
\[ \P\left(\cup_{j\geq i_0} \{Z(j\wedge\tau) \geq 0\}\right) \leq \exp
\left( - c \frac{  (n^2  p_0 \log n)^2}{ n^2 p_0 (\sqrt{n} p_0 \log n)^2} \right)
= e^{- c n / p_0} \leq e^{- c n}\,. \]
Altogether, w.h.p.\ $|X(i)| < \a^2 n^2 p \Phi^2$ for all $i$, as required.
\end{proof}

Observe that for all $p\geq n^{-1/2}$
the function $\Phi$ defined in Theorem~\ref{thm:Q} satisfies $1 \leq \Phi/\log n \leq e$, thus
the estimates \eqref{Yuvbound}--\eqref{Qbound} precisely feature a relative error of order $n^{-1/2}p^{-1}\log n$ in line
with the definition of $\zeta$ in Theorem~\ref{thm:aux}. Specifically, the conclusion of Theorem~\ref{thm:aux} provides the estimate $|Y_{u,v}-np^2| \leq \alpha n^{1/2}p\Phi$
for $\alpha = 3^{3M-1}$, thus fulfilling the hypothesis of Theorem~\ref{thm:Q} en route to a bound of $|Q/(\frac16 n^3p^3)-1| \leq C \zeta^2$ for $C = (\alpha e)^2$. Altogether, we can apply both theorems in tandem as long as $p \geq n^{-1/2+1/M}$. In particular, at that
point~\eqref{Qbound} guarantees that the process is still active with $(\frac16+o(1))n^{3/2+3/M}$ triangles, yet
there are merely $(\frac12-o(1))n^{3/2+1/M}$ edges left. This establishes the upper bound in our main result
modulo Theorem~\ref{thm:aux}.

%%%%%%%%%%%%%%%%%%%%%%%%%%%%%%%%%%%%%%%%%%%%%%%%%%%%%%%%%%%%%%
%%%
%%% Extension variables
%%%
%%%%%%%%%%%%%%%%%%%%%%%%%%%%%%%%%%%%%%%%%%%%%%%%%%%%%%%%%%%%%%

\section{From degrees to extension variables}\label{sec:extension}

Our goal in this section is to show that strong control on degrees, co-degrees and the number of triangles
implies some level of control on arbitrary subgraph counts.  Let $Y_u$ denote the degree of the vertex $u$
and as before let $Y_{u,v}$ and $Q$ denote the co-degree of $u,v$ and total number of triangles respectively.
Our precise assumption will be that
for some absolute constant $C>0$
\begin{align}
\left\{\begin{array}{cll}
\left| Y_u/(np)-1\right| &< C\, \zeta & \quad\mbox{ for all $u\in V_G$}\\
\noalign{\medskip}
\left| Y_{u,v}/(np^2) - 1\right| &< C\, \zeta  &\quad\mbox{ for all $u,v\in V_G$}\\
\noalign{\medskip}
\left| Q/(\frac16 n^3 p^3)-1\right| &< C\, \zeta^2
\end{array}\right.\label{eq-codeg-ensemble-guarantee}
\end{align}
where $\zeta = \zeta(n,p) = n^{-1/2} p^{-1} \log n$.

The function $\zeta$ begins the process at $\zeta = n^{-1/2+o(1)}$ and then gradually
increases with $p$ until reaching a value of $n^{-\epsilon+o(1)}$ for some (arbitrarily small) $\epsilon>0$
at the final stage of our analysis. However, in order to emphasize that the value of this $\epsilon$ plays no role in
this section we will only make use of the fact that $p(t)$ is such that
\begin{equation}
  \label{eq-zeta-assumption}
  \log(1/\zeta) \big/ \log \log n \to \infty\,.
\end{equation}
Setting
\begin{equation}
  \label{eq-tau-star}
  \tau_\star = \min\{ t > 0 \;:\;\mbox{Eqs.~\eqref{eq-codeg-ensemble-guarantee} or~\eqref{eq-zeta-assumption} are violated}\}\,,
\end{equation}
we will show that as long as $t \leq \tau_\star$ we can w.h.p.\ asymptotically
estimate the number of copies of any balanced (a precise definition follows) labeled graph with $O(1)$ vertices rooted
at a prescribed subset of vertices. These estimates will
feature a multiplicative error-term of $1+O(\zeta^{\delta})$, where $\delta>0$ will only
depend on the uniform bound on the sizes of the graphs under consideration.

\begin{definition}\label{def:extension}[extensions and subextensions]
An {\bf extension graph} is defined to be a graph $ H = ( V_H,E_H ) $ paired
with a subset $I_H \subseteq V_H$ of distinguished vertices which form an independent set in $H$.
The {\bf scaling} of an extension graph is defined to be
\[\Sc_H =\Sc_H(n,p)= n^ {v_H - \iota_H} p^{e_H}\,,\]
where $ e_H = |E_H| $, $ v_H = |V_H|$ and $\iota_H = |I_H|$.
The {\bf density} of $H$ is defined to be
\[ m_H = e_H/( v_H - \iota_H )\,.\]

\noindent
A {\bf subextension} $K$ of $H$, denoted $K \subset H$, is an extension graph $ K = ( V_K, E_K) $ with $ V_K \subset
V_H$, $E_K \subset E_H$, and the same distinguished set $I_K = I_H$ (thus $V_K \supset I_H$).
We say that $K$ is a {\bf proper subextension}, denoted by $K \subsetneq H$, whenever $E_K \subsetneq E_H$.
We will denote the trivial subextension $K=(I_H,\emptyset)$ (the edgeless graph on $I_H$) by $\mathbf{1}$.
\noindent
For any subextension $K \subset H$, define the {\bf quotient}
$ H/K $ to be the extension graph $ H/K = ( V_H, E_H \setminus E_K) $ with the distinguished vertex set $ I_{H/K} = V_K $.
Observe that
\[ \Sc_H = \Sc_K \, \Sc_{H/K} \,.\]
\end{definition}

Let $H=(V_H,E_H)$ be an extension graph associated with the distinguished subset $I_H\subset V_H$
as in Definition~\ref{def:extension} and set $ e_H = |E_H| $, $ v_H = |V_H|$ and $\iota_H = |I_H|$.
Let $ \varphi : I_H \to V_G $ be an injection.  We are interesting in tracking the
number of copies of $H$ in $G=G(i)$ with $ \varphi (I_H) $ playing the role of the distinguished
vertices.  That is, we are interested in counting the extensions from $ \varphi( I_H) $
to copies of $H$.  Formally, an injective map $ \psi : V_H \to V_{G} $ {\bf extends $ \varphi $ to a copy of $H$}
if $\psi$ is a graph homomorphism that agrees with $\varphi$, that is, $ \psi \mid_{I_H} \equiv \varphi $ and $uv \in E_H$
implies that $\psi(u)\psi(v) \in E$.
Define
\[  \Psi_{H,\varphi} = \# \Big\{ \psi\in V_H \to V_G \;:\; \psi \text{ extends }
\varphi \text{ to a copy of } H \Big\} \]
Note that $\Psi_{H,\varphi}$ counts \emph{labeled} copies of $H$ rooted at the given vertex subset $\varphi(I_H)$.
%We define the {\bf scaling} of an extension graph to be strictly
%\[ \Sc_H = \Sc_H(t) = n^ {v_H - \iota_H} p^{e_H}. \]
%

We will track the variable $ \Psi_{H, \varphi}$ for extension graphs $H$ that are dense
relative to their subextensions. More precisely, we say that $H$ is {\bf balanced}
if its density satisfies $m_H \geq m_K$ for any subextension $K$ of $H$.
Furthermore, $H$ is {\bf strictly balanced} if
its density is strictly larger than the densities of all of its subextensions.
Our aim is to control the variable $\Psi_{H,\varphi}$ for any balanced extension graph $H$ up to the point where the scaling of
$H$ becomes constant, formulated as follows.  Let
\begin{equation}\label{eq-tH-def}
t_H = \min\{ t \;:\; \Sc_H(n,p(t)) \leq 1\} \,,
\end{equation}
where the times $t$ being considered here are, as usual, of the form $i/n^2$ for integer $i$.
Recalling that
$ m_H = e_H/( v_H - \iota_H ) $ observe that $p(t_H) = n^{-1/m_H} $, thus
\begin{equation}
  \label{eq-density-comparison}
  m_{H_1} \geq m _{H_2} ~\Leftrightarrow~ t_{H_1} \leq t_{H_2}
\end{equation} and in particular any balanced $H$ has $t_H \leq t_K$ for any subextension $K$ of $H$ (a strict inequality
holds when $H$ is strictly balanced).
Thus, the threshold up to which we can track $\Psi_{H,\varphi}$ will indeed be dictated by $t_H$ rather than by $t_K$ for
some $K\subsetneq H$.

 Finer precision in tracking the variables $\Psi_{H,\varphi}$ will be achievable up to a point slightly earlier than $t_H$
such that $\Sc_H$ is still reasonably
large. To this end it will be useful to generalize the quantity $t_H$ as follows: With~\eqref{eq-zeta-def} in mind, for
any $r\geq 0$ let
\begin{equation}
  \label{eq-t-minus-def}
  t^-_H(r) = \min\big\{ t : \Sc_H(n, p(t)) \leq 1/\zeta^{r} \big\}
\end{equation}
(again selecting among all time-points of the form $t=i/n^2$ for integer values of $i$).
We will establish these more precise bounds on $\Psi_{H,\varphi}$ up to $t_H^-(\delta)$ for some suitably small $\delta>0$,
i.e., up to a point where the
scaling $\Sc_H$ is still super-logarithmic in $n$. (Recall that $\zeta(t)$ is decreasing in $t$ and yet $\log (1/\zeta)$
tends to $\infty$ faster than $\log\log n$.)

When examining the times $t_H$ and $t_H^-$ one ought to keep in mind that, for any extension graph $H$ on $O(1)$ vertices,
the value of $\Sc_H$ decreases
within a single step of the process by a multiplicative factor of $1+O(n^{-2})$. In particular, in that case
$ (1-O(n^{-2}))\zeta^{-r} \leq \Sc_H \leq \zeta^{-r}$
at time $t_H^-(r)$.

The main result of this section is the following estimate on the variables $\Psi_{H_,\varphi}$, where the proportional
error will be in terms of $\zeta$, the
varying proportional error we have for the variables $Y_{u,v}$ and $Q$.

\begin{theorem}
  \label{thm-Psi-H-estimate}
For every $L>0$ there is some $0<\delta<1$ so that w.h.p.\ for all $t \leq \tau_\star$ and every
extension graph $H$ on $v_H \leq L$ vertices and injection $\varphi:I_H\to V_G$ we have:
\begin{enumerate}
  [\!(i)]
  \item\label{it-fine}
  If $H$ is balanced then $ \left| \Psi_{H, \varphi} - \Sc_H \right| \leq \Sc_H \zeta^{\delta}$ for all $t
\leq t_H^-(\sqrt{\delta})$.

  \item\label{it-coarse}
If $H$ is strictly balanced then
  $\left| \Psi_{H, \varphi} - \Sc_H \right| \leq (\Sc_H)^{1-\sqrt{\delta}} (\log n)^{\rho}$ with $\rho=3 e^{v_H-\iota_H}$
holds for all $t_H^-(\sqrt{\delta}) \leq t \leq t_H$.
\end{enumerate}
\end{theorem}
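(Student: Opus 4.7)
I would prove the theorem by strong induction on the number $v_H - \iota_H$ of non-distinguished vertices of $H$, combined with a self-correcting supermartingale argument in the spirit of the proof of Theorem~\ref{thm:Q} at each inductive level. The base cases $v_H - \iota_H \in \{0, 1\}$ with $e_H \leq 2$ are immediate: $\Psi_{H,\varphi}$ is then $1$, a degree $Y_u$, or a co-degree $Y_{u,v}$, all controlled by the input ensemble~\eqref{eq-codeg-ensemble-guarantee}. The only non-trivial base case is the ``star'' extension $H$ with a single non-distinguished vertex joined to $d \geq 3$ distinguished vertices; this case must be handled by the same tracking argument as the general inductive step.

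For the inductive step I would start from the one-step change
\[
\E[\D\Psi_{H,\varphi}\mid\cF_i] \;=\; -\frac{1}{Q}\sum_{\psi}\Bigl(\sum_{uv \in \psi(E_H)} Y_{u,v} \;-\; (\text{collision corrections})\Bigr)\,,
\]
where $\psi$ ranges over injective extensions of $\varphi$ realizing $H$ in $G(i)$, and the correction terms count pairs of $\psi$-edges that are simultaneously hit by the same removed triangle (each such configuration corresponds to an auxiliary extension obtained by gluing a vertex onto two vertices of $\psi(H)$, controllable via the inductive hypothesis). Substituting $Y_{u,v}/(np^2) = 1 + O(\zeta)$ and $Q = (1+O(\zeta^2))\tfrac{1}{6}n^3 p^3$ from~\eqref{eq-codeg-ensemble-guarantee}, the leading drift equals $-\tfrac{6e_H}{n^2 p}\Psi_{H,\varphi}$, which precisely matches $\D\Sc_H/\Sc_H = -\tfrac{6 e_H}{n^2 p} + O(n^{-4})$. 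Hence the centred variable $X := \Psi_{H,\varphi} - \Sc_H$ inherits the self-correcting drift $\E[\D X \mid \cF_i] \approx -(6 e_H/(n^2 p))\, X$ characteristic of this process.

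Concentration is then established via the critical-interval method used for $Q$: let $Z := |X| - \varepsilon_H$ with target error $\varepsilon_H := \Sc_H \zeta^\delta$ for part (i) and $\varepsilon_H := \Sc_H^{1 - \sqrt{\delta}} (\log n)^\rho$ for part (ii); show that $Z(j \wedge \tau)$ is a supermartingale once $|X|$ enters a slightly narrower critical window $I_H$; and apply a Hoeffding-type bound with the one-step variation $|\D \Psi_{H,\varphi}|$ estimated via the inductive hypothesis applied to the triangle-glued auxiliary extensions. The resulting failure probability is super-polynomially small, so a union bound over the $n^{O(L)}$ relevant pairs $(H,\varphi)$ with $v_H \leq L$ closes the inductive step. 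In part (i), $\Sc_H \geq \zeta^{-\sqrt{\delta}}$ is super-polylogarithmic in $n$, so the multiplicative error $\zeta^\delta$ is comfortably larger than the noise floor. In part (ii), strict balancedness forces every proper subextension of $H$ to have scaling $\gg \Sc_H$ throughout the window $t \in [t_H^-(\sqrt\delta), t_H]$, so the fine estimates from part (i) on these smaller subextensions still drive the coarser drift analysis for $\Psi_{H,\varphi}$ itself.

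The main obstacle is that the collision corrections, and the variance bound for $\D\Psi_{H,\varphi}$, both involve extension graphs obtained by attaching a triangle to $H$ along an edge; these \emph{forward} extensions may well be larger than $H$, so a naive induction on $v_H - \iota_H$ does not close. Resolving this cascade requires either a cleverer well-ordering of extensions (treating the triangle-glued auxiliary graphs first, using balancedness to bound them with looser but adequate errors) or a simultaneous tracking of an interrelated family of variables whose linear error-constraint system is solved in the spirit of the triangular-ladder analysis of Sections~\ref{sec:ladders}--\ref{sec:tracking-ladders}. Disentangling these dependencies while preserving the required error $\zeta^\delta$ is, in my view, the most delicate part of the argument.
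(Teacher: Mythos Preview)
Your overall framework --- critical-interval supermartingale for $X = \Psi_{H,\varphi} - \Sc_H$, with the self-correcting drift $\E[\Delta X\mid\cF_i] = -(6+o(1))\tfrac{e_H}{n^2p}X$ --- is exactly what the paper does, and your choice of error functions $\Sc_H\zeta^\delta$ and $\Sc_H^{1-\sqrt\delta}(\log n)^\rho$ is correct. But the resolution of the ``cascade'' is not as you describe it, and this is a genuine gap.

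First, a clarification: the collision corrections are a non-issue. Two edges of a fixed copy $\psi$ can be hit by the same triangle in at most $\binom{e_H}{2}=O(1)$ ways, contributing $O(\Psi_{H,\varphi}/Q)$ to the drift, which is absorbed in the $O(\zeta)$ error. No auxiliary extensions are needed there. The actual difficulty is the \emph{Lipschitz bound}: when a single edge $xy\in E(G)$ is deleted, how many copies of $H$ die? This is $\sum_{ab\in E_H}\Psi_{H^*_{ab},\varphi^*}$, where $H^*_{ab}$ is $H$ with $a,b$ added to the distinguished set and $\varphi^*$ extends $\varphi$ by $a\mapsto x$, $b\mapsto y$. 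This is \emph{not} a forward extension --- it is a re-rooting of $H$ itself, and indeed $v_{H^*}-\iota_{H^*} < v_H-\iota_H$, so your induction direction is fine. The problem is that $H^*_{ab}$ is typically \emph{not balanced}, so neither part of the theorem's inductive hypothesis applies to it.

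The paper's device is this. For each $ab\in E_H$, let $T=T_{ab}$ be a subextension of $H$ containing $ab$ with minimal scaling $\Sc_T$. One then shows that every quotient of $T^*=T/\{0,1,a,b\}$ has scaling $\le 1$ (by minimality of $\Sc_T$), while every subextension of $H/T$ has scaling $\ge 1$ (again by minimality). These two conditions are exactly the hypotheses of the two parts of Corollary~\ref{cor:general-ext}, which gives the polylogarithmic bounds $\Psi_{T^*,\varphi^*}\le(\log n)^{O(1)}$ and $\Psi_{H/T,\eta}\le \Sc_{H/T}(\log n)^{O(1)}$. Their product bounds $|\Delta\Psi_{H,\varphi}|$, and a similar decomposition handles $\E[(\Delta\Psi)^2\mid\cF_i]$; one then applies Freedman's inequality (Theorem~\ref{thm:concentrate}), not Hoeffding, since the cumulative variance is much smaller than the Lipschitz bound times the number of steps. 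The apparent circularity --- Corollary~\ref{cor:general-ext} is derived from the theorem --- is resolved not by induction on $v_H-\iota_H$ but by a stopping-time bootstrap: one introduces $\tilde\tau$ as the first time any instance of the theorem fails, and proves that up to $\tilde\tau$ the supermartingales behave, hence w.h.p.\ $\tilde\tau$ never occurs. Finally, the constant $\delta$ is not arbitrary: it is chosen so that $\sqrt\delta$ separates the reciprocals of the finitely many distinct densities $m_A$ for $A\in\cA_L$ (see~\eqref{eq-delta-def}), which is what guarantees $\Sc_T \ge \zeta^{-3\delta}$ and makes the Freedman exponent large enough. Neither of your two proposed resolutions identifies this $T_{ab}$/quotient mechanism, and the ladder machinery of \S\ref{sec:ladders}--\ref{sec:tracking-ladders} is used for a different purpose (sharper errors on a specific family) and would not substitute here.
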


Before proving Theorem~\ref{thm-Psi-H-estimate} we will derive the following corollary which will be applicable to
extension graphs that are not necessarily balanced.
\begin{corollary}
\label{cor:general-ext}
Fix $L>0$ and let $0<t\leq \tau_\star$. Then w.h.p.\ for every extension graph $H$ on $v_H \leq L$ vertices
with
%minimal degree $1$ over all $v\notin I_H$ and
$\rho=\rho_H = 3\exp(v_H-\iota_H)$ we have:
\begin{enumerate}
  [(1)]
  \item\label{it-scaling-geq-1} If $\Sc_{K}(t) \geq 1$ for all subextensions $K\subset H$ then $\Psi_{H,\varphi}(t)\leq
(1+o(1))\Sc_H (\log n)^{\rho}$.
  \item\label{it-scaling-leq-1} If $\Sc_{H/K}(t) \leq 1$ for all subextensions $K\subset H$ then
$ \Psi_{ H, \varphi}(t)  \leq (1+o(1)) (\log n)^{\rho}$.
\end{enumerate}
\end{corollary}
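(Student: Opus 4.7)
The plan is to induct on $v_H-\iota_H$, reducing an arbitrary extension graph to a strictly balanced one by decomposing along a maximum-density subextension and then applying Theorem~\ref{thm-Psi-H-estimate}. The base case $v_H=\iota_H$ is immediate: $H$ has no edges (since $I_H$ is independent), so $\Psi_{H,\varphi}=1=\Sc_H$ and both asserted bounds hold trivially.

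For the inductive step I would select $K^*\subseteq H$ of maximum density among all subextensions, and then iteratively replace $K^*$ by any of its proper subextensions of the same density, yielding a strictly balanced $K^*$ with density $\max_{K\subset H}m_K$. If $K^*=H$ then $H$ itself is strictly balanced; Theorem~\ref{thm-Psi-H-estimate} yields the desired bound for $t\leq t_H$, and the estimate is transported past $t_H$ (as needed in regime (2)) by the monotonicity of $\Psi_{H,\varphi}$ in $t$, since each triangle removal can only destroy copies of $H$.

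Otherwise $\mathbf{1}\subsetneq K^*\subsetneq H$, and I would use the canonical decomposition
\[
\Psi_{H,\varphi} \;=\; \sum_{\psi_{K^*}} \Psi_{H/K^*,\psi_{K^*}},
\]
summing over injections $\psi_{K^*}$ that extend $\varphi$ to a copy of $K^*$ in $G$. Theorem~\ref{thm-Psi-H-estimate} applied to the strictly balanced $K^*$, combined with the above monotonicity for $t>t_{K^*}$, gives $\Psi_{K^*,\varphi}\leq (1+o(1))\max(\Sc_{K^*},1)(\log n)^{\rho_{K^*}}$. The induction hypothesis (applicable since $v_{H/K^*}-\iota_{H/K^*}=v_H-v_{K^*}<v_H-\iota_H$) bounds each summand by $(1+o(1))\Sc_{H/K^*}(\log n)^{\rho_{H/K^*}}$ in regime (1) and by $(1+o(1))(\log n)^{\rho_{H/K^*}}$ in regime (2). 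Noting that under (1) the hypothesis forces $\Sc_{K^*}\geq 1$ and under (2) it forces $\Sc_{K^*}\leq 1$, the product together with the multiplicativity $\Sc_H=\Sc_{K^*}\Sc_{H/K^*}$ delivers the desired bound up to a factor $(\log n)^{\rho_{K^*}+\rho_{H/K^*}}$. The strict inequality
\[
\rho_{K^*}+\rho_{H/K^*} \;=\; 3\bigl(e^{v_{K^*}-\iota_H}+e^{v_H-v_{K^*}}\bigr) \;\leq\; 6\,e^{v_H-\iota_H-1} \;<\; 3\,e^{v_H-\iota_H} \;=\; \rho_H
\]
(valid since $v_{K^*}-\iota_H$ and $v_H-v_{K^*}$ both lie in $\{1,\dots,v_H-\iota_H-1\}$) then furnishes a $(\log n)^{\Omega(1)}$ slack that absorbs multiplicative constants and the $(1+o(1))$ factors.

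The main obstacle, and the reason for choosing $K^*$ by maximum density, is verifying that the hypothesis of the corollary propagates from $H$ to the quotient $H/K^*$, since otherwise the induction cannot be invoked. For (2) this is essentially automatic: any subextension $K''$ of $H/K^*$ satisfies $(H/K^*)/K''=H/(K^*\cup K'')$ as extension graphs (viewing $K^*\cup K''$ as the subextension $(V_{K''},E_{K^*}\cup E_{K''})$ of $H$), so its scaling is $\leq 1$ by hypothesis. For (1) the maximality of $m_{K^*}$ is essential: writing $K^+=(V_{K''},E_{K^*}\cup E_{K''})$, the inequality $m_{K^*}\geq m_{K^+}$ rearranges (with $a=v_{K^*}-\iota_H$, $b=v_{K''}-v_{K^*}$, $c=e_{K^*}$, $d=e_{K''}$) into $d/b\leq c/a$, so the density of $K''$ as an extension graph is at most $m_{K^*}$. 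Since $\Sc_K\geq 1$ if and only if $p\geq n^{-1/m_K}$, the assumption $\Sc_{K^*}\geq 1$ then forces $\Sc_{K''}\geq 1$. Once this algebraic step and the reduction to a strictly balanced $K^*$ are in place, everything else is routine bookkeeping.
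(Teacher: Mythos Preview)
Your argument is correct and is essentially the paper's proof, reorganized as an induction on $v_H-\iota_H$ rather than an explicit chain decomposition $I_H=X_0\subsetneq X_1\subsetneq\cdots\subsetneq X_k=V_H$: your $K^*$ is precisely the paper's first link $K_1$, and unrolling your recursion on $H/K^*$ reproduces the remaining links. Both proofs rest on the same two ingredients: (a) choosing a strictly balanced maximum-density piece so that Theorem~\ref{thm-Psi-H-estimate} applies to it, and (b) showing that the hypotheses on $H$ descend to $H/K^*$ (your density inequality $m_{K''}\leq m_{K^*}$ is exactly the paper's $m_{K_i}\leq m_{K_{i-1}}$, and your quotient identity $(H/K^*)/K''=H/(K^*\cup K'')$ corresponds to the paper's passage through $H/K_{i-1}$). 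One small point you left implicit but which is needed: in regime~(2) you assert ``the hypothesis forces $\Sc_{K^*}\leq 1$'' --- this follows because taking $K=\mathbf{1}$ gives $\Sc_H\leq 1$, i.e.\ $p\leq n^{-1/m_H}$, and then $m_{K^*}\geq m_H$ yields $p\leq n^{-1/m_{K^*}}$, i.e.\ $\Sc_{K^*}\leq 1$.
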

\begin{proof}
Assume w.h.p.\ that the estimates in Theorem~\ref{thm-Psi-H-estimate} hold simultaneously for all extension graphs on at
most $L$ vertices throughout time $t$. We may further assume that every vertex $v\in V_H\setminus I_H$ has $\deg(v)\geq 1$. Indeed, for the statement in Part~\eqref{it-scaling-geq-1} this is w.l.o.g.\ since adding any isolated vertex to $V_H\setminus I_H$ would contribute a factor of $(1-o(1))n$ to $\Psi_{H,\varphi}$, balanced by a factor of $n$ to $\Sc_H$. For Part~\eqref{it-scaling-leq-1} this assumption is implied by the hypothesis $\Sc_{H/K}(t) \leq 1$, as we can take some subextension $K$ with $V_K=V_H\setminus\{v\}$ to obtain that $(p(t))^{\deg(v)} \leq 1/n$.

Define a sequence of strictly increasing sets $ I_H = X_0 \subsetneq X_1 \subsetneq X_2 \subsetneq \ldots
\subsetneq X_k = V_H $ as follows.
Associate each candidate for $X_i$ for $i\geq1$, one that contains $X_{i-1}$, with the extension graph $K_i=(X_i,E_i)$ where
\[ E_i = \{ (u,v) \in E_H \;:\; u,v\in X_i\mbox{ and either $u\notin X_{i-1}$ or $v\notin X_{i-1}$}~\}\,,\]
i.e., the set of edges of the induced subgraph on $X_i$ that do not have both endpoints in $X_{i-1}$, and set the
distinguished vertex set of
$K_i$ to be $I_{K_i}=X_{i-1}$ (guaranteed to be an independent set by our definition of $E_i$). With this definition
in mind, let $X_i$ be the
subset of vertices that maximizes $m_{K_i}$. If more than one such subset exists, let $X_i$ be one of these which has
minimal cardinality.

To see that the sequence of $X_i$'s is strictly increasing, let $i\geq 1$ and consider the potential values for $X_i$
building upon some
$X_{i-1} \subsetneq V_H$. Taking $X_{i}=X_{i-1}$ would yield $m_{K_i}=0$, strictly smaller than the density that would
correspond to $X_i=H$
(otherwise $V_H\setminus X_{i-1}$ would consist of isolated vertices in $H$, contradicting our minimal
degree assumption).
Hence, indeed $X_{i-1}\subsetneq X_i$ and overall $K_i$ is strictly balanced by construction. (We note in passing that
$K_k = H / K_{k-1}$ and
that $K_i$ is only a subextension of $H$ for $i\leq1$ since $I_H \subsetneq I_{K_i}$ for $i>1$.)

Motivating these definitions is the fact that for a given sequence $X_0,\ldots,X_k$ as above one can recover
$\psi\in V_H\to V_G$ (counted by $\Psi_{H,\varphi}$) iteratively from a sequence of $\psi_i \in V_{K_i}\to V_G$ (in which $\psi_0 = \varphi$ and $(\psi_{i})|_{X_{i-1}} = \psi_{i-1}$ for all $i$), hence
\begin{equation}
  \label{eq-Psi-Psi(Ki)}
%  \Psi_{H,\varphi}(t) = \prod_{i=1}^k \Psi_{K_i,\varphi_i}(t) \,.
  \Psi_{H,\varphi}(t) = \sum_{\psi_1 \in \Psi_{K_1,\varphi}} \,\sum_{\psi_2\in\Psi_{K_2,\psi_1}} \!\!\!\ldots \!\!\!\sum_{\psi_{k-1} \in \Psi_{K_{k-1},\psi_{k-2}}} \!\!\!\!\!\!\Psi_{K_k,\psi_{k-1}}(t) \leq \prod_{i=1}^k  \max_{\eta} \Psi_{K_i,\eta}(t)\,.
\end{equation}
(Here and throughout this section, with a slight abuse of notation we use the notation $\psi_i \in \Psi_{K_i}$ to denote an injective map $\psi_i$ counted by the corresponding variable $\Psi_{K_i}$.)

To prove Part~\eqref{it-scaling-geq-1} of the corollary, observe that $K_1 \subset H$ and so by hypothesis we
have $\Sc_{K_1}(t)\geq 1$, or equivalently, $t \leq t_{K_1}$. For any $2 \leq i \leq k$, the fact that $X_{i-1}\subsetneq X_i$
implies that $m_{K_{i-1}}$ is as large as $m_{K'}$, where $K'$ is the extension graph with vertex set $X_{i}$, distinguished
vertices $I_{K'}=X_{i-2}$ and all edges of $H$ between vertices of $X_{i}$ excluding those whose endpoints both lie in
$X_{i-2}$.
Since $E_{K'} = E_{i-1} \cup E_i$ we get
\[ \frac{|E_{i-1}|}{|X_{i-1}|-|X_{i-2}|} \geq \frac{|E_{i-1}|+|E_i|}{|X_{i}|-|X_{i-2}|}
%=\frac{|E_{i-1}|+|E_i|}{|X_{i}|-|X_{i-1}|+|X_{i-1}|-|X_{i-2}|}
\,,\]
which readily implies that
\[ m_{K_{i-1}} = \frac{|E_{i-1}|}{|X_{i-1}|-|X_{i-2}|} \geq \frac{|E_i|}{|X_{i}|-|X_{i-1}|} = m_{K_i}\,.
\]
Equivalently (via~\eqref{eq-density-comparison}), $t_{K_i}\leq t_{K_{i+1}}$ and by induction we thus have $t \leq t_{K_i}$
for all $i$.
Recalling that $K_i$ is strictly balanced, we can appeal to Theorem~\ref{thm-Psi-H-estimate} and derive that for any injection $\eta:I_{K_i}\to V_G$
\[ \Psi_{K_i,\eta}(t) \leq \left(1 + \max\big\{\zeta^\delta\,,\,(\Sc_{K_i})^{-\sqrt{\delta}}(\log n)^{\rho_i}\big\} \right)
\Sc_{K_i}
\leq (1+o(1))\Sc_{K_i} (\log n)^{\rho_i}\,,
\]
where $\rho_i = 3\exp(|X_i|-|X_{i-1}|)$. It is easily seen that by definition $\Sc_H = \prod_{i=1}^k \Sc_{K_i}$ and so
revisiting~\eqref{eq-Psi-Psi(Ki)} yields that under the hypothesis of Part~\eqref{it-scaling-geq-1} we have
\[  \Psi_{H,\varphi}(t) \leq (1+o(1))\Sc_{H} (\log n)^{\sum_{i=1}^k \rho_i}\,. \]
The proof of Part~\eqref{it-scaling-geq-1} is completed by the fact that $\exp(|X_i|-|X_{i-1}|) > 2$ for all $i$
(since the $X_i$'s are strictly increasing)
and in particular
\begin{equation}
  \label{eq-sum-rho-i}
  \sum_{i} \rho_i = 3\sum_i e^{|X_i|-|X_{i-1}|} < 3 \prod_i e^{|X_i|-|X_{i-1}|} = 3 e^{v_H - \iota_H} = \rho\,.
\end{equation}

It remains to prove Part~\eqref{it-scaling-leq-1}. Observe that the hypothesis in this part implies that $\Sc_{H}(t)\leq 1$
(by taking $K=\mathbf{1}$), that is we are considering some time point $t\geq t_H$.

To utilize the assumption on the scaling of quotients of $H$, we go back to the definition of the sequence of $X_i$'s and
argue that
\begin{equation}\label{eq-m(K_i)}
m_{K_i} \geq m_{H/K_{i-1}}\quad\mbox{for all $i\geq 1$}\,.
\end{equation}
Indeed, this follows from the fact that selecting $X_i=H$ would exactly correspond to having $K_i = H/K_{i-1}$, and
yet $X_i$ is chosen so as to maximize the value of $m_{K_i}$.

While $K_{i-1}$ itself may not be a subextension of $H$ (one has $K_{i-1}\subseteq H$ iff $i\leq 1$), one can modify
its distinguished vertex set to remedy this fact. Namely, the extension graph $K'=(X_{i-1},E_{i-1})$ with the distinguished
vertex set $I_{K'}=I_H$ is a subextension of $H$, and since $X_0 \subseteq X_{i-1}$ we get that
\[ \Sc_{H/K_{i-1}} = n^{v_H-|X_{i-1}|}p^{e_H-|E_{i-1}|} \leq n^{v_H-|X_0|}p^{e_H-|E_{i-1}|} = \Sc_{H/K'} \leq 1\,,\]
where the last inequality follows from our hypothesis on all the quotients of $H$.

From the inequality $\Sc_{H/K_{i-1}} \leq 1$ established above we infer that $t \geq t_{H/K_{i-1}}$.
At the same time, the combination of~\eqref{eq-density-comparison} and~\eqref{eq-m(K_i)} implies that
$t_{H/K_{i-1}} \geq t_{K_i}$, and altogether $t \geq t_{K_i}$.
Now, since $\Psi_{K_i,\varphi}(t)$ is monotone non-increasing in $t$ and we only aim to bound this quantity from above,
it suffices to provide an upper bound on $\Psi_{K_i,\varphi}(t_{K_i})$. To this end, apply Part~\eqref{it-coarse} of
Theorem~\ref{thm-Psi-H-estimate} (bearing in mind that $\Sc_{K_i}=1+O(n^{-2})$ at time $t_{K_i}$ and that $K_i$ is
strictly balanced) to get that for any injection $\eta:I_{K_i}\to V_G$
\[ \Psi_{K_i, \eta}(t_{K_i})\leq \left(1 + O(n^{-2})\right)(1+o(1))\left(1+(\log n)^{\rho_i}\right)\,, \qquad \mbox{ where }
\rho_i=3 e^{|X_i|-|X_{i-1}|}\,.\]
At this point we again appeal to~\eqref{eq-Psi-Psi(Ki)} to infer that
\[ \Psi_{H,\varphi}(t) \leq (1+o(1))(\log n)^{\sum_{i=1}^k \rho_i} \leq (1+o(1))(\log n)^\rho\,,\]
where we used the fact that $\sum_i \rho_i\leq \rho$ as seen in~\eqref{eq-sum-rho-i}.
This concludes the proof.
\end{proof}

\begin{proof}[\textbf{\emph{Proof of Theorem~\ref{thm-Psi-H-estimate}}}]

Let $\cA_L$ denote the set of all balanced extension graphs on at
most $L$ vertices and at least one edge (in the edgeless case one has $\Psi_{H,\varphi}=\Sc_H$
trivially by definition). Since $\cA_L$ is finite
and $m_H>0$ for all $H\in\cA_L$ we can choose a sufficiently small constant $ 0<\delta < 1$ as follows:
\begin{equation}
  \label{eq-delta-def}
  \delta = \min\bigg(\left\{ \Big|\frac1{m_A}-\frac1{m_B}\Big|^2 \,:\,
 % \begin{split}A,B&\in\cA_L\,,\\ m_A&\neq m_B\end{split}
 \begin{array}{c}A,B\in\cA_L\\ m_A\neq m_B\end{array}
  \right\} \cup\left\{\frac1{(3e_A)^2} \,:\, A \in \cA_L\right\}\bigg)\,.
\end{equation}
Combining the fact that $|m_A^{-1} - m_B^{-1}| \geq \sqrt{\delta}$ whenever $m_A\neq m_B$ together with the identity
$p(t_A)=n^{-1/m_A}$ implies that for any $A,B\in\cA_L$ with $t_A < t_B$
\[ \Sc_B(t_A) = \left(n^{m_B^{-1}} p(t_A)\right)^{e_B} = n^{\left(m_B^{-1} - m_A^{-1}\right)e_B}
\geq n^{\sqrt{\delta} e_B} > \zeta^{-\sqrt{\delta}}\,,\]
where the last inequality was due to $e_B\geq 1$ (with room to spare). In particular,
\[   t_A < t_B ~\Rightarrow~ t_A < t_B^-(\sqrt{\delta})\quad\mbox{ for all $A,B \in \cA_L$ }\,.
\]
Contrary to the monotonicity of $t_A$ in $m_A$ as per~\eqref{eq-density-comparison}, it is generally not
true that $t_A^-(r)$ is monotone in $m_A$. However, the above definition of $\delta$ will enable us to still
relate these thresholds as we will later argue that if $m_A \geq m_B$ then $t_A^-(\sqrt{\delta})
\leq t_B^-((3-o(1))\delta)$.

\medskip\noindent\emph{Proof of Part~\eqref{it-fine}.}
Let $H$ and $ \varphi$ be fixed.
We will apply the previously used martingale arguments based on critical intervals.
We begin
with the expected change in $ \Psi_{H,\varphi}$.
Here the probability of deleting a given copy of $H$ corresponds as usual to the number of triangles
resting on each of its edges,
and summing these separately can only incur a double count of $O(1)$ since $v_H=O(1)$. Therefore,
\begin{align*}
\E \left[ \Delta \Psi_{H,\varphi} \right] & = - \sum_{\psi \in \Psi_{H,\varphi}} \sum_{uv \in E_H} \frac{ Y_{\psi(u)
\psi(v)}-O(1)}{Q}
 = -\frac{ \Psi_{H,\varphi} \, e_H}{Q} \left(1 + O(\zeta)\right) np^2 \\
 & = -\left(6 + O(\zeta)\right)\frac{ \Psi_{H,\varphi} \, e_H}{n^2 p}\,.
\end{align*}
Now set $ X = \Psi_{H,\varphi} - \Sc_H $. Our critical interval for $|X|$ will be
\begin{align*}
I_\Psi =\big[ (1-\tfrac{\delta}{2e_H})\Sc_H \zeta^{\delta} ~,~ \Sc_H \zeta^{\delta}\big]\,.
\end{align*}
To evaluate the one-step change in $X$, we first see that the one-step change in $\Sc_H$ is deterministically given by
\begin{align}
\Delta \Sc_H &= n^{v_H-\iota_H}\left(\left(p-6/n^2\right)^{e_H}-p^{e_H}\right) = -
n^{v_H-\iota_H}\frac{6e_H}{n^2}\left(p-O(n^{-2})\right)^{e_H-1}\nonumber\\
&= -\left(1+O(n^{-2}p^{-1})\right)\frac{6e_H}{n^2 p} \Sc_H = -\left(6+o(\zeta)\right)
\frac{e_H}{n^2 p}\Sc_H\,.\label{eq-Delta-SH}
\end{align}
When $|X|\in I_\Psi$ we therefore have
\begin{equation*}
\begin{split}
\E[ \Delta X ] & = -\left(6 + O(\zeta)\right) \frac{e_H}{n^2 p} (\Sc_H + X) + (6+o(\zeta)\frac{e_H}{n^2 p}\Sc_H \\
& = - (6+o(1))\frac{  e_H  }{ n^2 p}X + O(\zeta) \frac{ e_H }{ n^{2} p } \Sc_H = - (6+o(1)) \frac{ e_H  }{ n^2 p} X,
\end{split}
\end{equation*}
where the last inequality used the fact that $\Sc_H \zeta  = o(X)$ whenever $|X|\in I_\Psi$ as then we have
$|X| \asymp \Sc_H \zeta^\delta$ for $\delta<1$.
Next we define what would be a supermartingale when $|X| \in I_\Psi$, namely
\[ Z = |X| - \Sc_H \zeta^{\delta} \,.\]
As in~\eqref{eq-Delta-SH}, the one-step change in $\Sc_H \zeta^{\delta}$ deterministically satisfies
\begin{equation}
  \label{eq-Delta-SHzeta}
  \Delta \Sc_H\zeta^{\delta} =  -\left(1+O(n^{-2}p^{-1})\right)\frac{6(e_H-\delta)}{n^2 p}\Sc_H \zeta^\delta
= -(6+o(1))\frac{e_H-\delta}{n^2 p}\Sc_H \zeta^\delta\,,
\end{equation}
hence whenever $|X|\in I_\Psi$ we have
\begin{align*}
\E[ \Delta Z] & = - (6+o(1)) \frac{ e_H }{ n^2 p}|X|  + (6+o(1)) (e_H-\delta) \frac{ \Sc_H \zeta^{\delta}}{ n^2p}\\
&\leq -\frac{6+o(1)}{n^2 p}\Sc_H \zeta^\delta \left((1-\tfrac{\delta}{2e_H})e_H - (e_H-\delta)\right)%\\ &
\leq -\frac{3\delta + o(1)}{n^2 p}\Sc_H \zeta^\delta < 0\,,
\end{align*}
with the last inequality valid for sufficiently large $n$.
Thus, $Z(t\wedge \tau_\Psi)$ is indeed a supermartingale when $\tau_\Psi$ is the stopping time for having
$|X|$ exit the interval $I_\Psi$.

We turn to establish concentration for $Z(t\wedge \tau_\Psi)$. We will invoke the following inequality due to Freedman~\cite{Freedman}, which was originally stated for martingales yet its proof extends essentially unmodified to supermartingales.
\begin{theorem}[\cite{Freedman}, Thm~1.6] \label{thm:concentrate}
Let $(S_0,S_1,\ldots)$ be a supermartingale w.r.t.\ a filter $(\cF_i)$.
Suppose that $S_{i+1}-S_i\leq B$ for all $i$, and write $V_t=\sum_{i=0}^{t-1} \E\left[(S_{i+1} - S_{i})^2 \mid \cF_{i}\right]$. Then
for any $s,v>0$
\[\P\big(\{S_t\geq S_0+s,\,V_t\leq v\}\mbox{ for some $t$}\big)\leq \exp\bigg\{-\frac{s^2}{2(v+Bs)}\bigg\}\,.\]
\end{theorem}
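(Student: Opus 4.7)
The plan is to prove Freedman's inequality via the classical exponential-supermartingale method, whose extension from the martingale case to supermartingales is indeed purely cosmetic: the hypothesis $\E[S_{i+1} - S_i \mid \cF_i] \leq 0$ is used only to absorb a linear term in a moment-generating function bound. For each parameter $\lambda > 0$ I would build a nonnegative $(\cF_t)$-supermartingale $M_t^\lambda$ whose exponent rewards deviations $S_t - S_0 \geq s$ and penalizes the predictable quadratic variation $V_t$, then extract the tail bound via optional stopping and Markov's inequality, and finally optimize in $\lambda$.

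The core analytic input is a one-step bound: for any $\cF$-measurable random variable $D$ with $D \leq B$ and $\E[D \mid \cF] \leq 0$,
\[ \E\!\big[e^{\lambda D} \,\big|\, \cF\big] \;\leq\; \exp\!\big(g(\lambda)\,\E[D^2 \mid \cF]\big)\,, \qquad g(\lambda) := (e^{\lambda B} - 1 - \lambda B)/B^2. \]
This follows from the elementary convexity inequality $e^x - 1 - x \leq (e^b - 1 - b)\,x^2/b^2$, valid for all $x \leq b$ (a direct consequence of the monotonicity of $y \mapsto (e^y - 1 - y)/y^2$), applied with $x = \lambda D$ and $b = \lambda B$, after taking conditional expectation, absorbing the $\lambda \E[D \mid \cF]$ term via nonpositivity, and using $1 + z \leq e^z$. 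Setting $D_i = S_{i+1} - S_i$ yields that
\[ M_t^\lambda \;:=\; \exp\!\big(\lambda(S_t - S_0) - g(\lambda)\,V_t\big) \]
is a nonnegative supermartingale with $M_0^\lambda = 1$.

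For the tail bound, introduce the stopping times $\tau := \inf\{t : S_t - S_0 \geq s\}$ and $\sigma := \inf\{t : V_t > v\}$. Since $V_t$ is nondecreasing, on the event $A := \{\exists\, t : S_t - S_0 \geq s,\ V_t \leq v\}$ one has $V_\tau \leq v < V_\sigma$, hence $\tau < \sigma$ and therefore $M^\lambda_{\tau \wedge \sigma} = M^\lambda_\tau \geq \exp(\lambda s - g(\lambda) v)$. Optional stopping applied to the bounded stopping time $\tau \wedge \sigma \wedge T$ gives $\E M^\lambda_{\tau \wedge \sigma \wedge T} \leq 1$, so Markov's inequality together with $T \to \infty$ yields
\[ \P(A) \;\leq\; \exp\!\big(-\lambda s + g(\lambda)\,v\big). \]

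Finally, I optimize over $\lambda > 0$. Using the elementary inequality $g(\lambda) \leq \lambda^2/\big(2(1 - \lambda B/3)\big)$, valid for $0 < \lambda B < 3$ and verified by a termwise comparison of the power series of $e^{\lambda B}$ and $(1 - \lambda B/3)^{-1}$, and then choosing $\lambda = s/(v + Bs)$, a routine simplification shows that the exponent is bounded above by $-s^2/(2(v + Bs))$, which is exactly the claimed bound. No step in this outline poses any real obstacle; the only mild subtlety is verifying $\tau < \sigma$ on $A$ so that stopping at $\tau \wedge \sigma$ simultaneously captures both inequalities, together with the usual truncation at a deterministic horizon $T$ needed to legitimize optional stopping.
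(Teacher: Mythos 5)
Your proof is correct. Note, however, that the paper does not prove this result at all; it quotes it directly from Freedman~\cite{Freedman}, remarking only that the original argument (stated for martingales) carries over verbatim to supermartingales since the martingale hypothesis enters only through the sign of $\E[S_{i+1}-S_i\mid\cF_i]$. Your exponential-supermartingale argument is precisely how Freedman's original proof goes, and your remark that the supermartingale hypothesis is used only to ``absorb a linear term'' in the moment-generating-function step is exactly the observation the authors invoke to justify the extension. All the details check out: the convexity inequality $e^x-1-x\le(e^b-1-b)x^2/b^2$ for $x\le b$ does follow from the monotonicity of $y\mapsto(e^y-1-y)/y^2$ (which holds on all of $\R$); the observation that $V_\tau\le v<V_\sigma$ forces $\tau<\sigma$ on the event $A$ is the right way to make the stopped supermartingale capture both inequalities simultaneously; and the final calculation with $\lambda=s/(v+Bs)$ does yield an exponent $\le -s^2/\bigl(2(v+Bs)\bigr)$. (In fact you could simplify the last step slightly by using the cruder bound $g(\lambda)\le\lambda^2/\bigl(2(1-\lambda B)\bigr)$ for $\lambda B<1$, which with the same choice of $\lambda$ gives the stated bound on the nose; the sharper $1-\lambda B/3$ version is what one would use to extract the stronger $\exp\{-s^2/(2(v+Bs/3))\}$, which the paper does not need.)
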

To qualify an application of this theorem, we must first establish $L^\infty$ and $L^2$ bounds on $\Delta Z$, which will follow from individually examining the various edges of $H$ whose potential removal at time $t < t_H^-(\sqrt{\delta})$ may
decrement the value of $\Psi_{H,\varphi}$.

For each edge $ab \in E_H$ we let $T = T_{ab}$ denote the subextension of $H$ that has the smallest scaling
among subextensions that contain $ab$ (and a minimal number of vertices if there
is more than one subgraph with the minimum scaling).  Notice that $ t_H \leq t_T $ since $T$ is a subextension of $H$.
Furthermore, if $K$ is any subextension of $T$ then, being also a subextension of $H$ we again have $t_H \leq t_K$.
In particular, $t < t_K$ for all $K \subset T$ and we can invoke Corollary~\ref{cor:general-ext}
(Part~\eqref{it-scaling-geq-1}) to give
\[ \Psi_{T,\varphi} \leq (1+o(1))\Sc_T (\log n)^{\rho_T}\,,\]
where $\rho_T = 3 \exp(v_T - \iota_T)$.

It is worthwhile explaining why invoking Corollary~\ref{cor:general-ext} at this point is justified. Indeed, one can prove the statements of both parts of the present theorem (and in effect also its corollary) by gradually extending the time range where they all hold. Formally, we consider the stopping time $\tilde{\tau}$ at which one of the assertions of the theorem fails (for any extension graph). As long as $\tilde{\tau} > t$ we may assume the bounds of this theorem (and hence also those of its corollary) as part of our attempt to increase their validity to the next time step, and so on. The bound we will present for $Z(t\wedge \tau_\Psi)$ will remain valid in conjunction with the additional stopping time $\tilde{\tau}$, and similarly for the second part of the theorem (featuring an analogous proof). Finally, analyzing $Z(t\wedge\tau_\Psi\wedge \tilde{\tau})$ and its Part~\eqref{it-coarse} counterpart is sufficient for proving the theorem since, by definition, at least one of these supermartingales (for the various extension graphs) will need to violate its corresponding large deviation bound in order for $\tilde{\tau}$ to occur.

Consider the effect of removing a given edge $xy\in E_G$ (as part of some triangle $xyz$) on $\Psi_{H,\varphi}$.
Copies of $H$ are lost iff $xy$ corresponds to some edge $ab\in E_H$ in at least one such copy, hence we can bound
this amount by adjusting $I_H$ to include $a,b$, which would be mapped to $x,y$. Namely, let $H^*=H^*_{ab}$ be the
extension graph with $I_{H^*}=I_H\cup\{a,b\}$ whose edges are $E_H$ minus any edges between vertices of $I_{H^*}$.
Further let $\varphi^*$ be the corresponding injection extending $\varphi$ to include $x,y$ as the images of $a,b$
(if this is inconsistent with $\varphi$ then this copy remains intact) to yield
\[ |\Delta\Psi_{H,\varphi}| \leq \sum_{ab\in E_H} \Psi_{H^*_{ab},\varphi^*}\,.\]
Recalling the above definition of $T=T_{ab}$, let $T^*_{ab}$ similarly include $a,b$ in its distinguished
set of vertices.
We then get that
\begin{equation}
  \label{eq-Delta-Psi}
  |\Delta\Psi_{H,\varphi}| \leq \sum_{ab\in E_H} \sum_{\eta \in \Psi_{T^*_{ab},\varphi^*}}
\Psi_{H^*_{ab}/T^*_{ab},\eta} = \sum_{ab\in E_H} \sum_{\eta \in \Psi_{T^*_{ab},\varphi^*}}
\Psi_{H/T,\eta}\,,
\end{equation}
where the equality is simply due to the fact that $V_T = V_{T^*}$, thus $H^*_{ab}/T^*_{ab} = H^*_{ab}/T$, and
moreover $a,b\in V_T$ and so $H^*_{ab}/T=H/T$. Consider this last term $\Psi_{H/T,\eta}$. We claim that
\begin{equation}
  \label{eq-Psi-H/T}
  \Psi_{H/T,\eta} \leq (1+o(1))\Sc_{H/T} (\log n)^{\rho_{H/T}}\,.
\end{equation}
Indeed, by the choice of $T$ we have $\Sc_H \geq \Sc_T$ (as $H$ is itself could play the role of $T$), and
so $\Sc_{H/T}=\Sc_{H}/\Sc_{T} \geq 1$. Moreover, if $K$ is any subextension of $H$ with $ab\in E_K$ and
$V_T\subset V_K$ then again $\Sc_{K/T} \geq 1$. Yet by definition, if $ab\notin E_K$ then $K/T = \bar{K}/T$
where $\bar{K}$ includes the extra edge $ab$ (as both of its endpoints lie in $T$) and altogether every
$K\subset H$ with $V_T\subset V_K$ has $\Sc_{K/T}\geq1$. Noticing that every subextension of $H/T$ can
be written as $K/T$ for such $K$ now qualifies an application of Part~\eqref{it-scaling-geq-1} of
Corollary~\ref{cor:general-ext}, from which the desired inequality~\eqref{eq-Psi-H/T} follows.

To conclude the $L^\infty$-bound on $\Delta Z$ we need to estimate $\Psi_{T^*_{ab},\varphi^*}$. We will argue
that this quantity is at most $\log^{O(1)}n$. Let $U^*$ be a subextension of $T^*$, and let $U$ be the
modification of $U^*$ into a subextension of $T$ via setting its distinguished vertex set to $I_T$ and
adding any edges among these vertices in $T$ (in particular the edge $ab$). By construction, $\Sc_{U^*}/\Sc_{U} =
\Sc_{T^*}/\Sc_T$. In addition, since $U$ contains the edge $ab$ then by the choice of $T$ we must have $\Sc_U \geq \Sc_T$.
Combining the two we get that $\Sc_{U^*}\geq \Sc_{T^*}$ and in particular $\Sc_{T^*/U^*} \leq 1$, i.e., all the quotients
of $T^*$ have scaling at most $1$. Part~\eqref{it-scaling-leq-1} of Corollary~\ref{cor:general-ext} now implies that
\[ \Psi_{T^*_{ab},\varphi^*} \leq (1+o(1))(\log n)^{\rho_{T^*}} < (1+o(1))(\log n)^{\rho_{T}}\,.\]
Combining this with~\eqref{eq-Delta-Psi} and \eqref{eq-Psi-H/T} we get that
\[ |\Delta \Psi_{T,\varphi}| = O\left(\Sc_{H/T} (\log n)^{\rho_{H/T}+\rho_T}\right)\,,\]
while recalling the estimates for $\Delta \Sc_H$ and $\Delta \Sc_H\zeta^\delta$ in~\eqref{eq-Delta-SH},\eqref{eq-Delta-SHzeta}
we see that they are negligible in comparison (note that $\Sc_T \leq n^2p$ since a valid candidate for $T$ is the graph containing the single edge $ab$)  now extends this to the following $L^\infty$ bound on the one-step change in $Z$.
\[ |\Delta Z| = O\left(\Sc_{H/T} (\log n)^{\rho_{H/T}+\rho_T}\right)\,.\]

For an $L^2$ bound on the one-step change $\Delta Z$, again consider the effect of removing an edge corresponding
to some $ab\in E_H$. For a given $\psi \in \Psi_{T,\varphi}$, the probability that the next drawn triangle will
contain this edge is $Y_{\psi(a)\psi(b)}/Q
%= O(n p^2 / (n^3 p^3))
= (6+o(1))n^{-2}p^{-1}$ due to our estimates on co-degrees and the total number of triangles present. In particular,
the probability to remove an edge corresponding to $ab\in E_H$ in any copy specified by some $\psi\in \Psi_{T,\varphi}$
is at most
\[ \Psi_{T,\varphi} O(n^{-2}p^{-1}) = O\left( \Sc_T (\log n)^{\rho_T} n^{-2} p^{-1} \right)\,.\]
%\begin{equation}\label{eq-select-psi(e)}
%\Sc_T (6+o(1))n^{-2}p^{-1} = O( \Sc_T n^{-2} p^{-1} ).
%\end{equation}
As shown above, this can only modify $Z$ by an additive term of $O(\Sc_{H/T} (\log n)^{\rho_{H/T}+\rho_T})$. Thus, if
we enter the critical interval $I_\Psi$ at some time $t_0=t(i_0)$ then for any $i\geq i_0$,
\begin{align*}
\E [(\Delta Z)^2 \mid \cF_{i}] &\leq \sum_{ab\in E_H} O\left( \Sc_T (\log n)^{\rho_T} n^{-2} p^{-1}
\left(\Sc_{H/T} (\log n)^{\rho_{H/T}+\rho_T}\right)^2\right)\\
&= O\left( \Sc_H \Sc_{H/T} n^{-2} p^{-1} (\log n)^{2\rho_{H/T}+3\rho_T}\right)\,.
 \end{align*}
 The quantities $\Sc_H$ and $\Sc_{H/T}$ above are w.r.t.\ time $t(i)$, yet our scaling is monotone decreasing
in $i$ and so we can replace these by the corresponding scaling terms at $t_0$. Summing the result over at
most $O(n^2 p)$ remaining possible steps gives
\[
  \sum_{i\geq i_0} \E [(\Delta Z)^2 \mid \cF_{i}] \leq O\left( \Sc_H(t_0) \Sc_{H/T}(t_0)
(\log n)^{2\rho_{H/T}+3\rho_T}\right)\,,
\]
where $\Sc_H(t_0)$ is short for $\Sc_H(n,p(t_0))$. Since $e_H>0$ one can bound $\sum_{i\geq i_0} \Sc_H(t)\Sc_{H/T}(t) p^{-1}(t)$ via integration to obtain that it is $O(n^2 \Sc_H(t_0)\Sc_{H/T}(t_0))$, leading to the above inequality.

Compare this last bound for the summation $\sum_i \E[(\Delta Z)^2 \mid \cF_{i}]$ with the product of the $L^\infty$
bound $B=O\left(\Sc_{H/T} (\log n)^{\rho_{H/T}+\rho_T}\right)$ and a deviation of
$s = \frac{\delta}{3e_H}\Sc_H \zeta^\delta$. The upper bound on
 $\sum_i \E[(\Delta Z)^2\mid\cF_{i}]$ has order at least $B s \zeta^{-\delta} (\log n)^{\rho_{H/T}+2\rho_T}$, clearly dominating $B s$. Turning to Freedman's inequality
and once again applying the fact that $\Sc_H = \Sc_T \Sc_{H/T}$ we get that for some fixed $c>0$,
\begin{align*}
\P\Big(Z(t\wedge \tau_\Psi) \geq Z(t_0) + s\mbox{ for some $t\geq t_0$}\Big) &\leq
\exp\left(-c \Sc_T(t_0) \zeta^{2\delta} (\log n)^{-2\rho_{H/T}-3\rho_T}\right)\,.
 \end{align*}
 We claim that $\Sc_T(t_0) \geq (1-o(1))\zeta^{-3\delta}$, and in fact this holds for
any $t_0 \leq t \leq t_H^-(\sqrt{\delta})$.
%If $T = H$ then by hypothesis $t \leq t_H^-(\sqrt{\delta})$. Similarly,
If $m_T < m_H$ then $t < t_T^-(\sqrt{\delta})$ since $t \leq t_H < t_T$ and given our definition of
$\delta$. Moreover, $\sqrt{\delta} \leq \frac13$ (as implied by Eq.~\eqref{eq-delta-def} and the fact that
$e_A \geq 1$) thus in this case
\begin{equation}\label{eq-ST-lower-bound}
\Sc_T \geq (1-o(1))\zeta^{-\sqrt{\delta}} \geq  (1-o(1))\zeta^{-3\delta}\,.\end{equation}
On the other hand, when $m_T = m_H$ we can write
\[ \Sc_T = \big(n^{1/m_T}p\big)^{e_T} = \big(n^{1/m_H}p\big)^{e_T} = (\Sc_H)^{e_T/e_H} \geq \Sc_H^{1/e_H} \geq (1-o(1))
\zeta^{-\sqrt{\delta}/e_H}\,,\]
with the final inequality due to $t \leq t_H^-(\sqrt{\delta})$. By~\eqref{eq-delta-def} we have $\sqrt{\delta}
\leq 1/(3 e_H)$, hence~\eqref{eq-ST-lower-bound} is again valid.
Altogether we can conclude that
\begin{align*}
\P\Big(Z(t\wedge \tau_\Psi) \geq Z(t_0) + s\mbox{ for some $t\geq t_0$}\Big) &\leq \exp\left(-(c-o(1))\zeta^{-\delta}
(\log n)^{-2\rho_{H/T}-3\rho_T}\right) \\ &\leq \exp\left(-\log^2 n\right)\,,
 \end{align*}
where the last inequality is thanks to~\eqref{eq-zeta-assumption}. Eq.~\eqref{eq-ST-lower-bound} similarly implies that
\[ \Sc_{H/T}(\log n)^{O(1)} = \frac{\Sc_H}{\Sc_T} (\log n)^{O(1)} \leq \Sc_H \zeta^{3\delta - o(1)}\,,\]
that is, the $L^\infty$-bound on $Z$, which is valid regardless of whether $|X|\in I_\Psi$, satisfies
$B = o(\Sc_H \zeta^\delta)$.
Accounting for the first step in which $|X|$ enters the interval $I_\Psi$ we get
\[ Z(t_0) + s \leq -\tfrac{\delta}{2e_H}\Sc_H \zeta^\delta + B + \tfrac{\delta}{3e_H}\Sc_H \zeta^\delta < 0\]
for sufficiently large $n$. In particular, $|X| < \Sc_H\zeta^\delta$ for all $t\geq t_0$ except with probability
$\exp(-\log^2 n)$. This probability allows us to absorb a union bound over all $n^{O(1)}$ choices for $(H,\varphi)$ and
$t_0$, thus concluding the proof of Part~\eqref{it-fine} of the theorem.

\medskip\noindent\emph{Proof of Part~\eqref{it-coarse}.}
Let $H$ and $ \varphi$ be fixed, and let $ X = \Psi_{H,\varphi} - \Sc_H$.
We follow the framework of applying a martingale argument within an appropriate critical interval for $|X|$,
which would now be
\begin{align*}
I_\Psi =\big[ \big(1-\tfrac{\sqrt{\delta}}2\big) (\Sc_H)^{1-\sqrt{\delta}}(\log n)^{\rho_H}  ~,~ (\Sc_H)^{1-\sqrt{\delta}}(\log n)^{\rho_H} \big]\,,
\end{align*}
where $\rho_H=3\exp(v_H-\iota_H)$.
From Part~\eqref{it-fine} of the theorem we get that $|X| \leq \Sc_H \zeta^\delta$ at time $t = t_H^-(\sqrt{\delta})$.
Yet at this time $\Sc_H \leq \zeta^{-\sqrt{\delta}}$
by definition and so $\Sc_H^{\sqrt{\delta}} \zeta^{\delta} \leq 1$. We thus infer that
 $|X| \leq \Sc_H \zeta^\delta \leq \Sc_H^{1-\sqrt{\delta}}$, and in particular $|X|\notin I_\Psi$ at time $t^-_H(\sqrt{\delta})$ provided that $n$ is large enough.

As in the proof of the first part of the theorem,
\begin{align*}
\E \left[ \Delta \Psi_{H,\varphi} \right] & = -(6+O(\zeta))\frac{e_H}{n^2 p} \Psi_{H,\varphi} \,, \qquad
\Delta \Sc_H = -(6+o(\zeta))\frac{e_H}{n^2 p}\Sc_H\,,
\end{align*}
and so
\begin{align*}
 \E[ \Delta X ] = -(6+o(1))\frac{e_H}{n^2 p} X + O(\zeta)\frac{e_H}{n^2p}\Sc_H
= -(6+o(1))\frac{e_H}{n^2 p} X\,,
\end{align*}
where the last equality will be justified by showing that $\Sc_H \zeta = o(|X|)$ when $|X|\in I_\Psi$. Indeed, with $0<\delta<1$ a sufficient condition for $\Sc_H\zeta =
o((\Sc_H)^{1-\sqrt{\delta}}(\log n)^{\rho_H})$ is to have $\Sc_H = o\left(\zeta^{-1}(\log n)^{\rho_H}\right)$, which in turn holds since
$t \geq t_H^{-}(\sqrt{\delta})$ and thanks to the assumption in~\eqref{eq-zeta-assumption} on the
decay rate of $\zeta$.

Similarly to~\eqref{eq-Delta-SH} we have $\Delta (\Sc_H)^{1-\sqrt{\delta}} = -(6+o(\zeta))(1-\sqrt{\delta})\frac{e_H}{n^2 p} (\Sc_H)^{1-\sqrt{\delta}}$
deterministically, hence the random variable defined by
\[ Z = |X| - (\Sc_H)^{1-\sqrt{\delta}} (\log n)^{\rho_H} \]
satisfies the following whenever $|X|\in I_\Psi$ and for any sufficiently large $n$:
\[
\E[ \Delta Z] = -(6+o(1))\frac{ e_H }{ n^2 p}|X|  + (6+o(1))(1-\sqrt{\delta})\frac{e_H}{n^2 p}(\Sc_H)^{1-\sqrt{\delta}}(\log n)^{ \rho_H} < 0\,.
\]
Therefore, if $t_0=t(i_0)$ is the first time where $|X|>(1-\frac{\sqrt{\delta}}2)(\Sc_H)^{1-\sqrt{\delta}}(\log n)^{\rho_H}$ and
$\tau_\Psi = \min\{t\geq t_0 : |X(t)|\notin I_\Psi\}$ then $Z(t\wedge \tau_\Psi)$ is a supermartingale.

For $L^\infty$ and $L^2$ bounds on $\Delta Z$ we again wish to analyze the effect of removing some edge $xy\in E_G$
playing the role of some $ab\in E_H$ in a copy of $H$. Letting $T = T_{ab}$ as before be the subextension of $H$ that
has the smallest scaling
among subextensions that have $ab\in E_T$, we now claim that in the setting of this part of the theorem we in fact
have $ T = H $.
  Indeed, since $H$ is strictly balanced, any $K \subsetneq H$ must have $t_H < t_K$ and thus also
$t_H < t^-_K(\sqrt{\delta})$.
  The hypothesis $t_H^-(\sqrt{\delta}) \leq t\leq t_H$ then implies that $\Sc_H \leq \zeta^{-\sqrt{\delta}}$ and
yet $\Sc_K > \zeta^{-\sqrt{\delta}}$, precluding the choice $T_{ab}=K$.

Revisiting the argument used to prove the previous part of the theorem, we let $H^*_{ab}$ denote the extension graph
that adds $a,b$ to $I_H$ and let $\varphi^*$ denote a corresponding injection. We then have
\[ |\Delta\Psi_{H,\varphi}| \leq \sum_{ab\in E_H} \Psi_{H^*_{ab},\varphi^*} \]
and yet, the by the minimality of $\Sc_{H^*_{ab}}$ we must have $\Sc_{H^*_{ab}/K^*}\leq 1$ for any subextension $K^*\subset H^*_{ab}$ and
so Part~\eqref{it-scaling-leq-1} of Corollary~\ref{cor:general-ext} implies that
\[ \Psi_{H^*_{ab},\varphi^*} \leq (1+o(1))(\log n)^{\rho_{H^*}}\,.\]
Combining these two equations yields that the potential change in $\Psi_{H,\varphi}$ is at most
\[ |\Delta\Psi_{H,\varphi}| = O\Big(\max_{ab\in E_H} (\log n)^{\rho_{H^*_{ab}}}\Big)\,.\]
Similarly to~\eqref{eq-Delta-SH}, the quantities $\Sc_H$ and $(\Sc_H)^{1-\sqrt{\delta}}(\log n)^{\rho_H}$ deterministically
change by at most
\[ O\big(\Sc_H n^{-1} p^{-2} (\log n)^{\rho_H}\big) = O\left(\Sc_H \zeta^2 (\log n)^{2+\rho_H}\right) = o(1)\]
 due to the fact that $\Sc_H \leq \zeta^{-\sqrt{\delta}}$ for $\delta<1$ and the decay rate of $\zeta$. Altogether we get
\begin{equation}
  \label{eq-Z-ii-L-inf-bound}
  |\Delta Z| = O\Big(\max\big\{ (\log n)^{\rho_{H^*_{ab}}} : {ab\in E_H}\big\}  \Big) =
O\left( (\log n)^{(1/e) \rho_{H}}\right)\,, \end{equation}
  where we used the fact that $\iota_{H^*} > \iota_H $ since at least one of $a,b$ is in
$I_{H^*_{ab}}\setminus I_H$ (as otherwise $ab\notin E_H$ as $I_H$ is an independent set), and so
$\rho_H = 3e^{v_H-\iota_H} \geq e \rho_{H^*}$.
Note that this $L^\infty$-bound holds in general regardless of whether $|X|\in I_\Psi$.
In particular, since $|X|\leq(1-\frac{\sqrt{\delta}}2)(\Sc_H)^{1-\sqrt{\delta}}(\log n)^{\rho_H}$ one step prior to $t_0$ and can only increase by
$O((\log n)^{(1/e)\rho_{H}})$ at a given step, we have
\begin{align}
|X(t_0)| &= (1-\tfrac{\sqrt{\delta}}2)(\Sc_H(t_0))^{1-\sqrt{\delta}}(\log n)^{\rho_H} + O\left((\log n)^{(1/e)\rho_{H}} \right)\nonumber\\
&= (1-\tfrac{\sqrt{\delta}}2+o(1))(\Sc_H(t_0))^{1-\sqrt{\delta}}(\log n)^{\rho_H}\label{eq-X(t0)}\,,
%\Psi_{H,\varphi} &\leq \Sc_H + (1-\tfrac{\sqrt{\delta}}2)(\Sc_H)^{1-\sqrt{\delta}}(\log n)^{\rho_H} + O\left((\log n)^{(1/e)\rho_{H}}   \right)\\
%&=
%O\left(\Sc_H + (\Sc_H)^{1-\sqrt{\delta}}(\log n)^{\rho_H}\right)\,,
 \end{align}
(where the term $(\log n)^{(1/e)\rho_H}$ was absorbed in the higher order $(\log n)^{\rho_H}$-term) and similarly, for all $t_0\leq t <\tau_\Psi$
\[
|X(t)| \leq (1+o(1))(\Sc_H(t))^{1-\sqrt{\delta}}(\log n)^{\rho_H}\,.
\]
As such, since the probability to remove an edge that corresponds to some $ab\in E_H$ in a given copy $\psi\in\Psi_{H,\varphi}(t)$ is at most
$Y_{\psi(a)\psi(b)}/Q=(6+o(1))n^{-2}p^{-1}$, the overall probability for this event to occur for some
$\psi\in\Psi_{H,\varphi}(t)$ is at most
\[ \Psi_{H,\varphi}(t) O(n^{-2}p^{-1}) \leq O\left(\left(\Sc_H(t) +
(\Sc_H(t))^{1-\sqrt{\delta}}(\log n)^{\rho_H}\right)n^{-2}p^{-1}\right)\,.
\]
This implies that for any $t=t(i)$ for $i \geq i_0$ we have that $\E [(\Delta Z)^2 \mid \cF_{i}]$ is at most
\[\sum_{ab\in E_H} O\left(\left(\Sc_H(t) +
(\Sc_H(t))^{1-\sqrt{\delta}}(\log n)^{\rho_H}\right)n^{-2}p(t)^{-1}(\log n)^{2e^{-1}\rho_{H}} \right)\,,
\]
and integrating this over the $O(n^2 p(t_0))$ remaining possible steps (again noting that $e_H (1-\sqrt{\delta})>0$ and so
$\sum_{i\geq i_0} (\Sc_H)^{1-\sqrt{\delta}} p^{-1} = O(n^2 (\Sc_H(t_0))^{1-\sqrt{\delta}})$ as in the proof of Part~\eqref{it-fine} of this theorem)
now gives
\begin{align}
  \sum_{i\geq i_0} \E [(\Delta Z)^2 \mid \cF_{i}] &\leq \sum_{ab\in E_H} O\left(\left(\Sc_H(t_0) +
(\Sc_H(t_0))^{1-\sqrt{\delta}}(\log n)^{\rho_H}\right)(\log n)^{2e^{-1} \rho_H} \right) \nonumber\\
&= O\left(\left(\Sc_H(t_0) + (\Sc_H(t_0))^{1-\sqrt{\delta}}(\log n)^{\rho_H}\right)(\log n)^{2e^{-1}\rho_{H}} \right)  \label{eq-L2-sum-bound-coarse} \,.
\end{align}
Compare this bound with the product of the $L^\infty$ bound given in~\eqref{eq-Z-ii-L-inf-bound} and a deviation value of
$s = \frac{\sqrt{\delta}}4 (\Sc_H(t_0))^{1-\sqrt{\delta}}(\log n)^{\rho_H}$. This product has order $(\Sc_H(t_0))^{1-\sqrt{\delta}}(\log n)^{(1+e^{-1})\rho_H}$ whereas our
bound in~\eqref{eq-L2-sum-bound-coarse} has order at least $(\Sc_H(t_0))^{1-\sqrt{\delta}}(\log n)^{(1+2e^{-1})\rho_H}$, strictly larger.
We can thus extract from Freedman's inequality that for some fixed $c>0$,
\begin{align*}
\P\Big(\exists t>t_0: Z(t\wedge \tau_\Psi) \geq Z(t_0) + s\Big) &\leq
\exp\left(- c \frac{(\Sc_H(t_0))^{2-2\sqrt{\delta}} (\log n)^{(2-2e^{-1})\rho_H}}
{\Sc_H(t_0) + (\Sc_H(t_0))^{1-\sqrt{\delta}}(\log n)^{\rho_H}}\right)
 \end{align*}
which, according to the larger term between $\{\Sc_H\,,\,(\Sc_H)^{1-\sqrt{\delta}}(\log n)^{\rho_H}\}$, is at most
\[ \exp\left(-(c/2) \min\left\{ (\Sc_H)^{1-2\sqrt{\delta}}(\log n)^{(2-2e^{-1})\rho_{H}}\,,\,
(\Sc_H)^{1-\sqrt{\delta}}(\log n)^{(1-2e^{-1})\rho_{H}}\right\} \right)\,.
\]
Going back to the definition $\rho_H= 3\exp(v_H-\iota_H)$, since $v_H-\iota_H \geq 1$ (otherwise $H$ is edgeless) we have
$\rho_H \geq 3e$ and thus the exponent of the $\log n$ term above is at least $3(e-2)>2$ and we can deduce that for any
sufficiently large $n$ we have
\[ \P\Big(\exists t>t_0: Z(t\wedge \tau_\Psi) \geq Z(t_0) + s\Big) \leq \exp(- \log^2 n)\,.\]
Finally, recall that $Z(t_0)=-(\frac{\sqrt{\delta}}2-o(1))(\Sc_H(t_0))^{1-\sqrt{\delta}}(\log n)^{\rho_H}$ as guaranteed by~\eqref{eq-X(t0)}. We can thus conclude that $Z(t) < 0$, that is $|X(t)|<(\Sc_H(t))^{1-\sqrt{\delta}}(\log n)^{\rho_H}$,
for all $t\geq t_0$ and large enough $n$ except with probability $\exp(-\log^2 n)$. A union bound over the $n^{O(1)}$ choices for $t_0$ and
the pairs $(H,\varphi)$
completes the proof of the theorem.
\end{proof}

%%%%%%%%%%%%%%%%%%%%%%%%%%%%%%%%%%%%%%%%%%%%%%%%%%%%%%%%%%%%%%
%%%
%%% Triangular ladders
%%%
%%%%%%%%%%%%%%%%%%%%%%%%%%%%%%%%%%%%%%%%%%%%%%%%%%%%%%%%%%%%%%

\begin{figure}
\centering
\fbox{
\includegraphics[width=\textwidth]{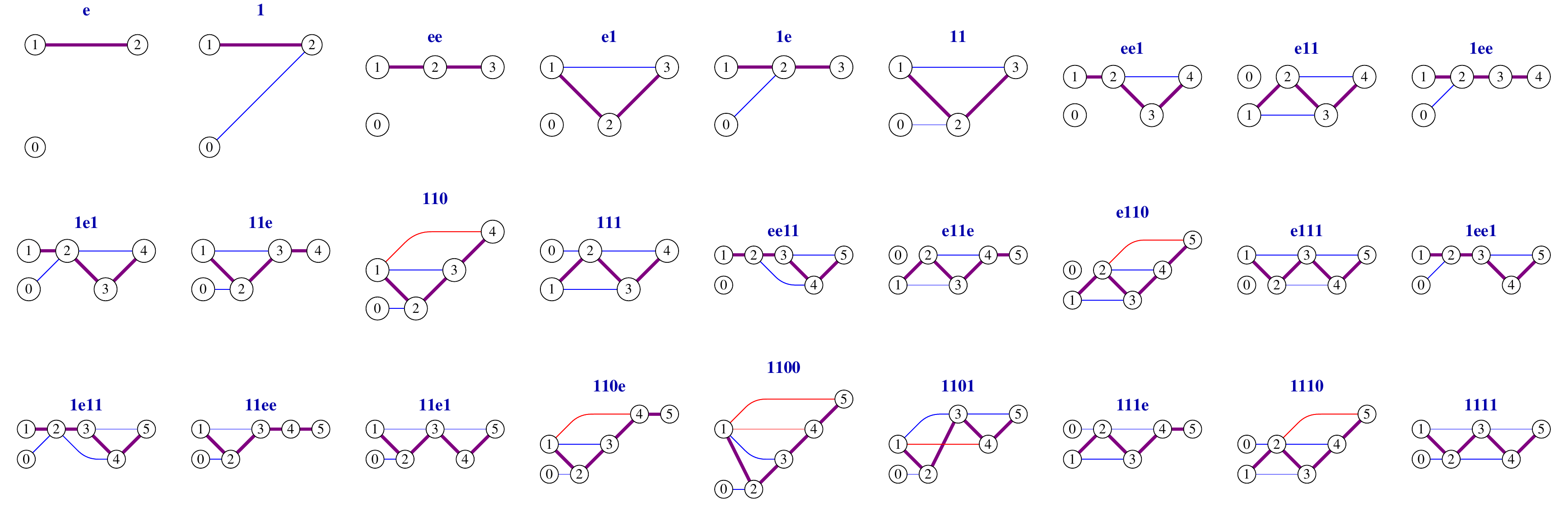}
}
\caption{The set of triangular ladders $\sL_\pi$ (labeled by $\pi$) for all $|\pi|\leq 4$.}
\label{fig:ladders}
\end{figure}

\section{Triangular ladders and their extensions}\label{sec:ladders}
%\subsection{Constructing trackable triangular ladders}\label{sec:ladders-def}
\begin{definition}\label{def:ladders}
  [triangular ladders]
Let $\Pi \subset \{\emph{\edg},0,1\}^*$ denote the set of all words
which do not have $0$ in indices 1 and 2, have at most 2 occurrences of $ \emph{\edg}$ and do not
 contain the substrings $ \emph{\edg} 0 $, $ \emph{\edg} 1 0 $ or $ \emph{\edg} 1 \emph{\edg}$.
For any $\pi \in \Pi$ of length $k \ge 1$,
the {\bf triangular ladder} $ \sL_\pi$ is the following labeled undirected graph on the vertex set $\{0,1,2,\ldots,k+1\}$:\\
\begin{enumerate}[(i)]
\item \label{it-k=1} $k=1$: $\sL_1$ has the two edges $\{02, 12\}$; $ \sL_\edg$ has the single edge 12.
\item \label{it-k>1} $k\geq2$:
let $H = \sL_{\pi^-}$ where $\pi^-$ is the
$(|\pi|-1)$-prefix of $\pi$. Let $\sL_\pi$ be the graph obtained from $H$
as follows: First add the vertex $k+1$ and the edge $(k,k+1)$. In addition, if $\pi(k)=1$
add the edge $(k-1,k+1)$, whereas if $\pi(k)=0$ %(and so $\pi(k-1)\neq \edg$),
add the edge $(v,k+1)$, where $v$ is the unique neighbor of $k$ in $H$ other than $k-1$.
\end{enumerate}
\end{definition}
\noindent
Observe that vertex 0 is isolated iff $\pi(1)=\edg$. The set of triangular ladders of length at most $5$ is illustrated in Fig~\ref{fig:ladders}. Implicit in the above definition is the fact that by construction, the last vertex of $\sL_\pi$, i.e., the vertex $k+1$
when $|\pi|=k$, has precisely two neighbors if $ \pi(k) \in \{0,1\}$ (one of which is the vertex $k$) and exactly
one neighbor (the vertex $k$) when $ \pi(k)= \edg$.
In particular, since the substring $\edg0$ cannot appear in $\pi\in\Pi$, the case $\pi(k)=0$ in Item~\eqref{it-k>1} above
has $\pi(k-1)\neq\edg$ and so the vertex $k$ indeed has two neighbors in $H$.
Further note that if $ \pi( \ell) = \edg $ for $\ell>1$ then the vertex $ \ell $ is a cut-vertex of
$ \sL_\pi $. Namely, in this case there is no edge $(a,b) \in E( \sL_{\pi}) $ such that
$ a < \ell < b $.

\begin{definition}
The {\bf scaling} of a triangular ladder $ \sL_\pi$ w.r.t.\ some given $n$ and $p$ is defined to be
$ \Sc_{\pi} = \Sc_{\pi}(n,p) = n^{v_\pi-2} p^{e_\pi} $, where $ e_\pi = |E(\sL_\pi)|$ and $v_\pi = |V(\sL_\pi)|$.
\end{definition}
\noindent
The scaling $\Sc_\pi$ will correspond asymptotically to the expected number of labeled copies of $\sL_\pi$
rooted at two given vertices (matching the labels $0$ and $1$) along our process.
Recalling the notion of extension graphs (see Definition~\ref{def:extension}), $\Sc_\pi$
is simply the scaling of the extension graph obtained from $\sL_\pi$ via the distinguishing vertex set $I=\{0,1\}$.
 Observe that if $|\pi|=k$ and $\pi$ contains $0\leq s\leq 2$ copies of $ \edg$ then
 $ \Sc_\pi = n^{k} p^{2k-s} $.

Our focus will be on tracking a special subset of the family of all triangular ladders, whose formulation will require the following definition.

\begin{definition}[$f$-fan]\label{def:fan}
For $ f \geq 3$, an {\bf $f$-fan} at some vertex $a>0$ in a triangular ladder $ \sL_\pi $ is
a sequence of vertices that has one of the following two forms:
\begin{enumerate}[(1)]
\item $\pi(a-2)\neq \emph{\edg}$ and vertices $ a+1, a+2, \ldots, a+f+1 $ are all adjacent to the vertex $a$.
%then these vertices constitute an {\bf $f$-fan at $a$}.
\item $\pi(a-2)= \emph{\edg}$ and vertices $ a-1,a+1, \ldots, a+f $ are all adjacent to the vertex $a$.
%then these vertices constitute an {\bf $f$-fan at $a$}.
\end{enumerate}
Equivalently, $ \sL_\pi$ has an $f$-fan at $a>0$ with $\pi(a-2)\neq \emph{\edg}$ if $ \pi(a+1) =1 $
and $\pi(a+i)=0$ for $2\leq i \leq f$.
Similarly, $\sL_\pi$ has an $f$-fan at $a>0$ with $\pi(a-2)=\emph{\edg}$ if $ \pi(a+1) =1$
and $\pi(a+i)=0$ for $2\leq i \leq f-1$.
\end{definition}
Note that there cannot be an $f$-fan at 0, nor can there be one at a vertex $a$ whenever $\pi(a)=\edg$.
Figure~\ref{fig:fans} illustrates the structure of $f$-fans as defined above.
\begin{figure}
\centering
\fbox{
\includegraphics[width=5.5in]{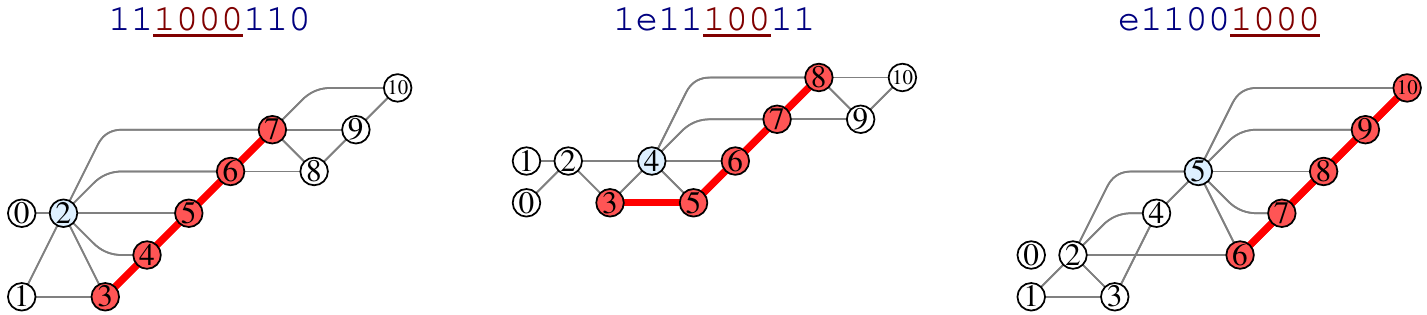}
}
\caption{Fans in triangular ladders: The vertices highlighted in red form 4-fans at vertices 2, 4 and 5 (note the
exceptional structure of the fan at $a=4$ due to having $\pi(a-2)=\edg$).}
\label{fig:fans}
\end{figure}

\begin{definition}\label{def:M-bnd}[$M$-bounded triangular ladders]
For $M\geq3$, define $ \cB_M $ to be the set of all $ \pi \in \Pi $ that do not contain an $M$-fan and in addition satisfy one of the following conditions:
\begin{enumerate}[(a)]
\item \label{no-e} $\pi $ contains no copy of $\emph{\edg}$ and has length at most $3M-1$, or

\item \label{one-e} $\pi $ contains 1 copy of $\emph{\edg}$, has length at most $2M$ and does not have $\pi(2M)=\emph{\edg}$, or

\item \label{two-e} $\pi $ contains 2 copies of $\emph{\edg}$, has length at most $ M+1$ and does not have $ \pi(M+1) = \emph{\edg}$.
\end{enumerate}
If $\pi \in \cB_M$ we say that $\sL_\pi$ is an {\bf $M$-bounded triangular ladder}.
\end{definition}
It is easily seen that $\cB_M\subset \cB_{M'}$ for any $M \leq M'$.
The following additional observation demonstrates some of the reasons for the precise choice of constants in the definition of $\cB_M$.
\begin{observation}
The family $\cB_M$ is closed under the prefix operation.
Indeed, let $\sigma$ be a proper prefix of some $\pi\in \cB_M$.
Precluding an $M$-fan from $\pi$ clearly also precludes it from $\sigma$. While $\sigma$ may contain fewer copies of $\edg$,
these would meet relaxed length criteria in Definition~\ref{def:M-bnd}, hence $\sigma\in \cB_M$.
\end{observation}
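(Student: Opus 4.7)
My plan is a direct unpacking of Definitions~\ref{def:ladders}, \ref{def:fan}, and \ref{def:M-bnd} for the prefix $\sigma$ of $\pi \in \cB_M$. The guiding principle is that every constraint defining $\cB_M$ is either \emph{local in the letters of the word} or \emph{monotone under length truncation}, so the observation really boils down to bookkeeping. I would proceed in three short stages.

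First, I would verify $\sigma \in \Pi$. The constraints defining $\Pi$ are: no $0$ at indices $1$ or $2$; no occurrence of the substrings $\edg 0$, $\edg 1 0$, $\edg 1 \edg$; and at most two copies of $\edg$. The first is a statement about specific initial positions, inherited by any prefix; the second is a forbidden-substring condition, likewise inherited; the last is a count which can only weakly decrease under truncation. Hence $\sigma \in \Pi$. Second, I would verify that $\sigma$ contains no $M$-fan: by Definition~\ref{def:fan}, an $M$-fan at a vertex $a$ is witnessed by the values of $\pi$ at positions $a-2$ together with $a+1, a+2, \ldots$ up to roughly $a+M$, so any such local pattern in $\sigma$ would also be a local pattern in $\pi$ at the same position, contradicting $\pi \in \cB_M$.

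The third and only case-driven stage is checking that $\sigma$ satisfies one of the clauses (a)--(c). Let $k \in \{0,1,2\}$ denote the number of $\edg$'s in $\pi$ and $k' \leq k$ the number in $\sigma$. The relevant length bounds are $3M-1, 2M, M+1$ for $k = 0, 1, 2$ respectively, and for $M \ge 3$ one has the monotonicity $3M-1 \geq 2M \geq M+1$. Therefore the bound corresponding to $k'$ dominates the one corresponding to $k$, giving
\[
|\sigma| \,\leq\, |\pi| \,\leq\, \text{bound}(k) \,\leq\, \text{bound}(k'),
\]
so the length part of the clause for $k'$ holds. The endpoint clauses (``$\sigma(2M) \neq \edg$'' when $k'=1$ and $|\sigma|=2M$, or ``$\sigma(M+1) \neq \edg$'' when $k'=2$ and $|\sigma|=M+1$) follow because prefix indices agree with parent indices: if $|\sigma|$ reaches one of the critical endpoints $2M$ or $M+1$, then so does $|\pi|$, and the corresponding condition on $\pi$ transfers to $\sigma$.

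The only aspect requiring any thought at all is handling the mixed sub-cases $k > k'$ (where a prefix has strictly fewer $\edg$'s than $\pi$), but these are absorbed by the numerical monotonicity of the bounds above. No probabilistic input or martingale machinery is required, so I do not expect any substantive obstacle; the proof is essentially the three verifications outlined above.
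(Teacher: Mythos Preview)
Your proposal is correct and follows the same approach as the paper, which itself offers only the two-sentence sketch embedded in the statement of the observation; you have simply unpacked that sketch in detail.

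One small sharpening: your justification for the endpoint clauses (``the corresponding condition on $\pi$ transfers to $\sigma$'') tacitly assumes $k=k'$, which you do not verify. In fact the endpoint clauses are \emph{vacuous} for a proper prefix, and your own inequality chain already shows this once you use the strict inequality $|\sigma|<|\pi|$: combined with $|\pi|\le\text{bound}(k)\le\text{bound}(k')$ it gives $|\sigma|<\text{bound}(k')$, so $|\sigma|$ can never equal the critical endpoint $2M$ (when $k'=1$) or $M+1$ (when $k'=2$). This is cleaner than the transfer argument and avoids the case split on whether $k=k'$.
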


Having defined the family of graphs we wish to track throughout the process, we proceed to a classification of the edges in $\sL_\pi$ for $\pi\in\cB_M$. This will in turn be used to study the effect of removing such edges as part of triangles eliminated by our process.

\begin{definition}[boundary edges]\label{def:boundary}
Let $ \sL_\pi$ be an $M$-bounded triangular ladder for some $M\geq 3$ and let $yz \in E(\sL_\pi)$
be such that $ 0<y<z$ and $ \pi(y) \neq \emph{\edg}$. Let
\[ \pi' = \begin{cases}
\pi_{z-1} \circ 0 & \text{ if } y < z-1 \\
\pi_{z-1} \circ 1 & \text{ if } y =z-1\,,
\end{cases} \]
where $ \pi_{\ell} $ denotes
the $\ell$-prefix of $\pi$ and $\circ$ denotes string concatenation.
We say $ yz$ is a {\bf boundary edge} of $ \sL_\pi$ w.r.t.\ $\cB_M$ iff $ \pi' \notin \cB_M $.  In other
words, an edge is a boundary edge if the ladder obtained by removing all vertices larger than $z$ and
then attaching a new vertex adjacent to $y,z$ no longer belongs to the family $\cB_M$.
We classify a boundary edge $yz$ into one of two categories:
\begin{itemize}[$\bullet$]
  \item {\bf outer boundary edges}: $z=|\pi|+1$ (last vertex in $\sL_{\pi}$) and in addition
  $|\pi|$ is maximal w.r.t.\ its number of copies of $\emph{\edg}$ (i.e., $|\pi$ is $3M-1$, $2M$ or $M+1$ if it
  has 0, 1 or 2 such copies resp.).
% either $\pi$ has no copies of $\emph{\edg}$ and $|\pi|=3M-1$, or $\pi$ has 1 copy of $\emph{\edg}$ and $ |\pi| = 2M $, or $ \pi $ has 2 copies of $ \emph{\edg}$ and  $ |\pi| = M+2$.
\item {\bf side boundary edges}: all other boundary edges. Necessarily $y<z-1$, there is at most 1 copy of \emph{\edg}\ in $\pi$,
the ladder $\sL_\pi$ has an $(M-1)$-fan at $y$ whilst $\sL_{\pi'}$ has an $M$-fan at $y$.
\end{itemize}
\end{definition}

\begin{figure}
\centering
\fbox{
\includegraphics[width=\textwidth]{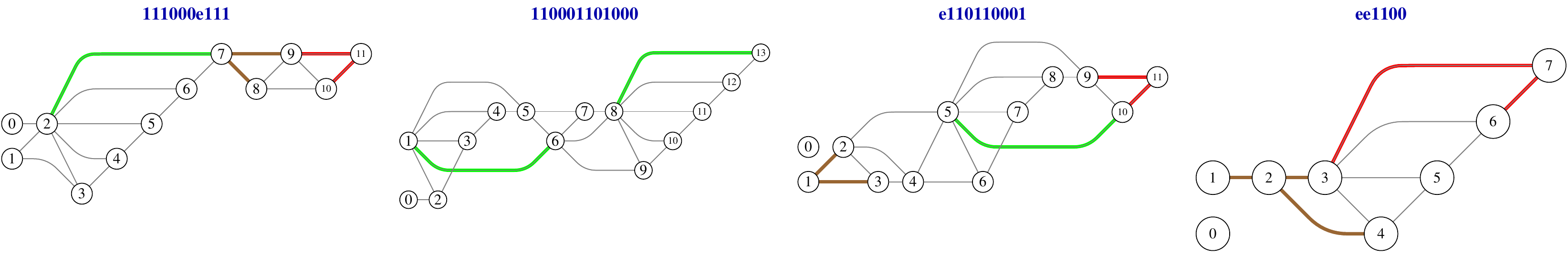}
}
\caption{Examples of $M$-bounded triangular ladders for $M=5$. Outer boundary edges w.r.t.\ $\cB_5$ are highlighted
in red,
side boundary edges are green and initial edges are brown.}
\label{fig:bnd-ladders}
\end{figure}

To see that indeed side boundary edges are accompanied by $\pi$ of the form specified in the above
definition ($y<z-1$, at most 1 copy of $\edg$ and an $M$-fan in $\pi'$) argue as follows. First,
we claim that if $\pi$ has 2 copies of $\edg$ then $ \sL_\pi $ cannot have side boundary
edges. To show this, recall that in this case $|\pi|\leq M+1$ and there are 2 copies of $\edg$ in $\pi_{M}$.
After excluding the case $|\pi|=M+1$, $z=M+2$ (corresponding to an outer boundary edge) we are left with either
$z = M+1$ or $z < M+1$.
In the former case, $|\pi'|=M+1$ and $\pi'$ retains both $\edg$ copies of $\pi$ in its $M$-prefix,
thus $\pi'\in\cB_M$ by Item~\eqref{two-e} of Definition~\ref{def:M-bnd}. In the latter, $|\pi'| \leq M$,
and either it contains 2 copies of $\edg$ --- here $\pi'\in\cB_M$ as per the value of $|\pi'|$ --- or
it contains 1 copy of $\edg$ and again $\pi'\in\cB_M$ since $\sL_{\pi'}$ does not contain an $M$-fan (any $f$-fan
in $\pi'$ would have $\pi'$ be at least $f+2$ symbols long).
We have thus established that $\pi$ can have at most 1 copy of $\edg$. Next, recall that by definition $\pi$ has no $M$-fan. The exclusion of the
cases $|\pi|=2M$, $z=|\pi|+1$ with 1 copy of $\edg$ and $|\pi|=3M-1$, $z=|\pi|+1$ with no copies of $\edg$
(corresponding to outer boundary) implies that $|\pi'|\leq |\pi|$, and yet
$\pi'\notin \cB_M$ (since $yz$ is a boundary edge), hence it must contain an $M$-fan. Comparing
$\pi$ to $\pi'$ now implies that $\pi'=\pi_{z-1}\circ0$ so as to accommodate a new $M$-fan (missing
from $\pi_{z-1}$), therefore indeed $y<z-1$. Finally, it further follows from this analysis that
\begin{equation}
  \label{eq-side-boundary-yz-relation}
  z = \left\{\begin{array}{ll}
  y+M   & \mbox{if $\pi(y-2) \neq \edg$}\,,\\
  y+M-1   & \mbox{if $\pi(y-2) = \edg$}\,.
\end{array}\right.
\end{equation}

%Finally, we note that $\sL_\pi$ can feature at most one side boundary edge.
%Indeed, consider some $\pi$ with 1 copy of $\edg$ (recall that having 2 copies of $\edg$ precludes any
%side boundary edges from occurring) and suppose that $yz$ and $y'z'$ are two distinct boundary edges.
%Noting that $z$ determines the identity of $y$ by the fact $y<z-1$ established above, it follows that
%without loss of generality $z'< z$ (otherwise we would also have $y=y'$). As shown above, we must have
%an $M$-fan supported by the suffix of $\pi_{z-1}\circ 0$ and similarly for $\pi_{z'-1}\circ 0$. It is
%now easy to verify that
%$\pi$ must have two substrings of the form $10\ldots0$ as follows:
%\begin{inparaenum}[(i)]
%\item a substring ending at index at least $M+1$ (e.g., an $(M-1)$-fan at 2 or at 3 in $\pi$), and
%\item a substring of length at least $M-1$ (any $(M-1)$-fan at $a>3$ in $\pi$).
%\end{inparaenum}Crucially, these substrings cannot overlap, hence $|\pi|\geq 2M$. Note that
%a strict inequality $|\pi|> 2M$ is impossible as otherwise $\pi\notin \cB_M$. Thus, $|\pi|=2M$ and $z=2M+1$, implying that
%$yz$ is an outer boundary edge. Altogether there cannot be two distinct side boundary edges in $\sL_\pi$, as claimed.

We complement the definition of boundary edges by categorizing the each of the remaining edges into one of two types:
\begin{definition}[non-boundary edge classes]\label{def:non-boundary-edges} Let $\sL_\pi$ be an $M$-bounded triangular ladder
and let $yz$ be a non-boundary edge of $\sL_\pi$. We say that $yz$ is an {\bf initial edge} if $y<z$ and $\pi(y)=\emph{\edg}$,
and otherwise we say that $yz$ is an {\bf interior edge}.
\end{definition}

Figure~\ref{fig:bnd-ladders} depicts the family of 3-bounded triangular ladders,
highlighting, outer boundary edges, side boundary edges and initial edges in these ladders w.r.t.\ $\cB_3$.

\begin{remark*}
While $f$-fans and $M$-bounded triangular ladders were both defined for values of $f,M\geq 3$, one could
extend these notions
to $f=M= 2$. However, going back to Definition~\ref{def:fan}, a $2$-fan at vertex $3$ occurs whenever
$\pi$ begins with $\edg111$,
as opposed to $f$-fans for $f>2$ which are accompanied by non-overlapping patterns of the form $10\ldots0$.
%As a consequence,
%the family of $2$-bounded triangular graphs would consist of $\cB_2 = \{\edg, 1, \edg\edg, \edg1, 1\edg, 11, \edg\edg1,
%\edg11, 1\edg1, 11\edg, 111, 11\edg1, 1111, 11111\}$, while $\edg111\notin\cB_2$ due to its $2$-fan at $3$.
%This anomaly leads for instance
%to having two side boundary edges in $\sL_{\edg11}$ (cf.\ when $M\geq 3$ any ladder $\sL_\pi \in\cB_M$ can feature
%at most one side boundary edge), namely $24$ and $34$
%(cf.\ when $M\geq 3$ any side boundary edge $yz$ with $y<z$ must have $y<z-1$).
In light of this anomaly at $M=2$, and given that our application would have $M$ be arbitrarily large,
our attention will be restricted to $f,M\geq3$.
\end{remark*}

\begin{definition}[backward extension]
Let $ \sL_\pi $ be a triangular ladder that contains some outer boundary edge $yz$.
The {\bf backward extension from $yz$ to $ \sL_\pi$},
denoted by $ B_{\pi,yz} $, is defined as the graph obtained by deleting from $\sL_ \pi $
all edges within the distinguished vertex set $ I = \{0,1,y,z\} $.
\end{definition}
\noindent
Note that we have the following
scalings for these extension graphs
\begin{equation}
  \label{eq-backward-ext-density}
  \Sc_{B_{\pi, yz}} = \begin{cases}
 n^{3M-3} p^{6M-3}   & \text{ if } |\pi| = 3M-1\,, \\
  n^{ 2M-2} p^{ 4M-2}  & \text{ if } |\pi| = 2M\,, \\
n^{M-1} p^{2M-1} & \text{ if } |\pi| = M+1\,.
\end{cases}
\end{equation}

As in Definition~\ref{def:extension}, the density of a backward extension graph $H$ is equal to $m_H=e_H/(v_H-4)$,
as the distinguished vertex set of $H$ is $I=\{0,1,y,z\}$ for some outer boundary edge $yz$. To qualify
an application of Theorem~\ref{thm-Psi-H-estimate} on backward extension graphs we need to verify that
they are balanced (that is, the density of a backward extension graph $H$ is as large as the density of any
subextension $K$ of $H$).

\begin{lemma}\label{lem:balance}
Let $\sL_\pi$ be an $M$-bounded triangular ladder for some $M\geq 3$, and let $yz$
be an outer boundary edge of $\sL_\pi$. Then the backward extension graph $B_{\pi, yz}$ is
balanced.
\end{lemma}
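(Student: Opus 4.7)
The plan is to establish $m_K\leq m_H$ for every subextension $K$ of $H=B_{\pi,yz}$. An inspection of the three cases of~\eqref{eq-backward-ext-density} shows that the density is uniformly $m_H=(2M-1)/(M-1)=2+1/(M-1)$, and that $E_H$ differs from $E(\sL_\pi)$ by removal of only the single edge $yz$. The latter uses the constraints defining $\cB_M$ together with the position of outer boundary edges near the end of the ladder: vertex $0$ is adjacent only to $2$ (and only when $\pi(1)\neq\edg$), vertex $1$ is adjacent only to $2$ and possibly $3$, and for $y$ or $z$ to be adjacent to $2$ or $3$ would require fan structure that is either truncated by $\edg$-separators in cases~(b)--(c) or ruled out by the no-$M$-fan condition in case~(a).

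Parametrize subextensions by $S=V_K\setminus I$ and take $E_K$ to contain every edge of $E_H$ with both endpoints in $I\cup S$ (this only increases $m_K$). Grouping edges by their larger endpoint, the claim reduces to showing
\[
 e(S)\;=\;\sum_{v\in S}\! f(v,S) \,+\, f(y,S)+f(z,S)\;\leq\;\left(2+\tfrac{1}{M-1}\right)|S|
\]
for every nonempty $S\subseteq V\setminus I$, where $f(v,S)\leq d^-(v)\leq 2$ counts the backward neighbors of $v\in S$ lying in $I\cup S$, while $f(y,S),f(z,S)\leq 2$ count the backward neighbors of $y,z$ that lie in $S$ (edges from $y,z$ into $I$ were removed when passing to $E_H$). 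Setting $D=\sum_{v\in S}(d^-(v)-f(v,S))$ for the number of backward edges from $S$ that exit $I\cup S$, the bound rearranges to $f(y,S)+f(z,S)-D-\sum_{v\in S}(2-d^-(v))\leq |S|/(M-1)$, which is immediate from the trivial bounds once $|S|\geq 4(M-1)$.

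The heart of the argument is therefore the small-$|S|$ regime. The two backward neighbors of $y$ in $\sL_\pi$ are $y-1$ together with a ``fan center'' $y^\star$ reached by tracing the maximal run of consecutive $0$'s in $\pi$ ending at position $y-1$; the no-$M$-fan condition forces $y-y^\star\leq M$. If $f(y,S)=2$ then $S$ must contain both $y-1$ and $y^\star$, and when $y^\star<y-2$ any intermediate chain vertex lying in $S$ has \emph{its} own fan-center predecessor further back and hence outside $I\cup S$ (unless $S$ expands to cover more of the chain), thereby inflating $D$. The same structural trade-off applies at $z$ in parallel, and a case split separates $y=z-1$ (where $y,z$ share the terminal chain, yielding the tightest constraint) from $y=z^\star$. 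A short induction on $|S|$ or on the distance $y-y^\star$ reduces the task to a bounded family of base configurations verifiable by inspection (cf.\ Fig.~\ref{fig:bnd-ladders}). The main obstacle is executing this deficit-compensation uniformly across cases~(a)--(c) of Definition~\ref{def:M-bnd}, since each $\edg$ in $\pi$ acts as a chain-separator whose effect on $f(\cdot,S)$ and on $D$ must be tracked explicitly.
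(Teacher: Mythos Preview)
Your setup is sound: the density $m_H=2+1/(M-1)$ is correct, the claim that $E_H=E(\sL_\pi)\setminus\{yz\}$ is correct (though your justification is breezy --- the paper spends several lines on this per case), and the rearranged inequality
\[
f(y,S)+f(z,S)-D-\sum_{v\in S}(2-d^-(v))\;\le\;|S|/(M-1)
\]
is a legitimate reformulation that disposes of $|S|\ge 4(M-1)$ trivially.

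However, the proposal is not a proof: the entire content lies in the small-$|S|$ regime, and there you offer only heuristics (``a short induction\ldots reduces the task to a bounded family of base configurations verifiable by inspection''). You explicitly acknowledge that ``the main obstacle is executing this deficit-compensation uniformly across cases (a)--(c)'' --- which is precisely the lemma. The paper's proof shows this case analysis is genuinely long: it runs several pages, splitting on the number of $\edg$ symbols, on whether $\pi(1)=\edg$ or $\pi(1)=1$, and then into numerous sub-cases tracking an ``extra edge'' $\gamma b$ whose position relative to missing intervals is controlled by the no-$M$-fan condition and by the cut-vertex property at $\edg$ positions. Your fan-center argument (``any intermediate chain vertex\ldots has its own fan-center predecessor further back and hence outside $I\cup S$'') is not obviously correct as stated: in a fan at $y^\star$, intermediate vertices have backward neighbors $\{y^\star,\text{predecessor}\}$, and if $y^\star\in S$ (which you require for $f(y,S)=2$) then one of them \emph{is} in $S$, so you only gain deficit from the predecessor chain --- and tracking whether this suffices is exactly the intricate bookkeeping you have not done.

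By contrast, the paper's approach takes a \emph{maximal} subextension $C$ with $m_C>m_B$ and studies the \emph{first interval} $[a,b)$ missing from $V_C$; maximality means that re-inserting this interval must drop the density below $m_B$, yielding a clean local inequality $e_0/(b-a)<m_B$ to violate. This localization is what makes the case analysis tractable (though still lengthy). Your global deficit-counting is a reasonable alternative framework, but until you actually carry out the small-$|S|$ analysis it remains an outline, not a proof.
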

\begin{proof}
With Definition~\ref{def:boundary} in mind, without loss of generality we have $z=|\pi|+1$
where $|\pi|$ is either $3M-1$, $2M$ or $M+1$ depending on whether it has $0$, $1$ or $2$ occurrences of $\edg$, respectively.
We will first prove the statement of the lemma for the case where $\pi(1)=\edg$.

Suppose for the sake of contradiction that $B=B_{\pi, yz} = (V_B,E_B)$ is not balanced, and let $C=(V_C,E_C)$
be a maximal proper subextension of $B$ such that $ m_C > m_B $.
Let $a,b$ be vertices such that $[a,b)$ is the first interval of vertices missing from $V_C$,
that is, $j\in V_C$ for $1\leq j < a$ and for $j=b$ whereas
$ j \notin V_C$ for $a\leq j < b$. Observe that $2\leq a < b \leq z = |V_B|$ since $\{1,y,z\}\subset V_C \subsetneq V_B$.
By the maximality assumption, the
extension graph we get by adding $a, \dots, b-1$ (and all incident edges) to $C$
has density at most $m_B$.  Let $e_0$ be the number of edges within
$ a, \dots, b-1 $ or connecting these vertices to $ V_C $ in $B$ (or equivalently
in $\sL_\pi$, since $\{0,1,y,z\}\subset V_C$ and so the two edge-sets are equal).
 Since $m_C > m_B$ we now get
\begin{equation}
\label{eq:req}
\frac{e_0}{ b-a} < m_B\,.
\end{equation}
As each vertex that joins a triangular ladder has either one or two neighbors among the previous vertices, one of which is
always the preceding
vertex, we also have
\begin{equation}
\label{eq:starter}
 e_0 \geq 2 (b-a) +1 - \#\left\{ a \leq j \leq b-1 \;:\; \pi(j-1)=\edg\right\}\,.
 \end{equation}
In this observation we included the edge $ (b-1,b) $ (the $+1$ term in the right hand side)
yet disregarded another potential edge joining $b$ to some $\gamma < b-1$ (which is present
iff $\pi(b-1) \neq \edg$). We call $\gamma b$ the
`extra' edge, and a careful consideration of when $\gamma b$ exists will play a role below.
Consider now 2 cases depending on the structure of $ \pi $.

\begin{enumerate}[(1)]
\item\label{it-one-e-copy} $\pi$ contains one copy of $ \edg$:

Here we have $|V_B|=|V(\sL_\pi)|=2M+2$ and $e_\pi =2(|V_B|-2)-1=4M-1$. We claim that $1y,1z \notin E(\sL_\pi)$
and thus the extension graph $B$ satisfies $E_B=E(\sL_{\pi})\setminus \{yz\}$.
 Indeed, as the pattern $\edg10$ is forbidden from any $\pi\in \Pi$, the neighborhood of the vertex $1$
is always a subset of $\{2,3\}$.
Since the form of $\pi$ dictates that $z=2M+1$ and that $\sL_\pi$ has no $M$-fan, in particular
$y \geq M+1 \geq 4$ and so neither $y$ nor $z$ are neighbors of $1$, as claimed. Altogether, $|E_B| = e_\pi -1 = 4M-2$ and
\begin{equation}\label{eq:extra1}
m_B = \frac{|E_B|}{|V_B| - 4} = %\frac{4M-2}{ 2M-2} =
2 + \frac{1}{M-1} \,.
\end{equation}
Further note that the extra edge $\gamma b$ with $\gamma<b-1$ is present since $b-1 \geq a \geq 2$ and
so $\pi(b-1)\neq \edg$.
Several cases are now possible.

\begin{inparaenum}[\bf{Case} (\ref{it-one-e-copy}.1):]
\item $ a > 2 $.
Then the symbol $ \edg$ does not
appear in $ \pi(a-1), \dots, \pi(b-2) $.
It follows from~\eqref{eq:req},\eqref{eq:starter},\eqref{eq:extra1}
that $b-a > M-1$, and as $3 \leq a < b \leq 2M+1$ we get
\begin{equation}
  \label{eq-b-a-bounds}
   M \leq b-a \leq 2M-2\,.
\end{equation}
Examine the extra edge $\gamma b$. By Definition~\ref{def:M-bnd}, the ladder $ \sL_\pi $ does
not contain an $M$-fan, implying that
%there is an index $i$ such that
%$ a+2 \le i \le a+M < b $ and $ \pi(i) = 1 $.
%This implies that the extra edge joins $b$ with some vertex in the set $a+1, \dots, b-2$.
$\gamma \geq b-M$
(this inequality is tight if there is an $(M-1)$-fan at $\gamma$).
Combining this with the fact $b-a\geq M$ yields $a \leq \gamma < b-1$, hence
 we can adjust~\eqref{eq:starter} by the extra edge $\gamma b$ to have
 $ e_0 \ge 2(b-a)+2 $. By~\eqref{eq-b-a-bounds} we now get
 \begin{equation}
   \label{eq-b-a-comparison}
   \frac{e_0}{b-a} \geq 2 + \frac{2}{b-a} \geq 2 + \frac{1}{M-1} = m_B\,,
 \end{equation}
contradicting~\eqref{eq:req}.

\item $a=2$ and $ b >3 $. Note that here $b < 2M+1$ as otherwise $V_C=\{0,1,2M+1\}$ contradicting the
fact that $\{0,1,y,z\}\subset V_C$.
Since $ \pi(1) = \edg $, we have a contribution of $-1$
from the last term in~\eqref{eq:starter}.  On the other hand, since $ b >3$,
the extra edge $\gamma b$ must feature $2 \leq \gamma \leq b-2$ (the vertex $1$ has only $\{2,3\}$ as neighbors in particular
excluding $b\geq 4$). Taking this into account we can modify~\eqref{eq:starter} into $ e_0 \geq 2(b-a) +1 $.
At this point we can immediately preclude the case $b+1\in V_C$.
Indeed, if  $ b+1 \in V_C$ then this vertex has 2 neighbors among previous vertices (as $\pi(b)\neq\edg$),
one of which must be among $2, \ldots, b-1$, thus $e_0 \geq 2(b-a)+2$. At the same time, having $b+1\leq |V_B|=2M+1$
implies that $b-a \leq 2M-2$ and so inequality~\eqref{eq-b-a-comparison} is again valid, contradicting~\eqref{eq:req}.
Assume therefore that $ b+1 \not\in V_C $ and let $ b'$ be the smallest index greater
than $b$ such that $ b' \in V_C$.  Let $e_1$ be the number of edges
among $ 2, \ldots,b-1, b+1, \ldots,  b'-1$ plus those joining this set with $ V_C$.  In place of~\eqref{eq:req}
we now write
\[  \frac{e_1}{b'- 3} < m_B = 2 + \frac{1}{M-1}\,, \]
and accounting for the edge 12, two edges going backwards from each of the vertices $3 \leq j \leq b'-1$ and one
edge going back from $b'$ we arrive at $ e_1 \ge 2b'-4 $. This implies that $e_1/(b'-3) \geq 2 + 2/(b'-3) \geq m_B$,
a contradiction.

\item $a=2$ and $b=3$. In this case, the first interval missing from $V_C$ is simply the singleton $\{2\}$. Recall
that $|V_B|=2M+2\geq 8$ and that $\pi$ begins with the prefix $\edg 11$ (thus the vertex 2 is adjacent to $\{1,3,4\}$
to the very least in $\sL_\pi$). Clearly, if $ 4 \in V_C$ then $e_0/(b-a) \geq 3 > m_B$ in contradiction with~\eqref{eq:req}.
Assume therefore that $4 \notin V_C$ and let $b'$ be the smallest index greater
than $3$ such that $ b' \in V_C$. Again let $e_1$ be the number of edges among the vertex
$2,4, \ldots , b'-1$ plus the number of edges joining these vertices with  $V_C$.
Instead of~\eqref{eq:req} we have
\begin{equation}
\label{eq:reqspec}
 \frac{e_1}{b'-3} < m_B = 2 + \frac{1}{M-1}.
  \end{equation}
Counting the edges $12,23$, two edges going backward from each $ 4 \leq j \leq b'-1 $ as well as the edge
$ (b'-1,b') $ we get $ e_1 \geq 2b'-5$.
Therefore, it follows from~\eqref{eq:reqspec} that $ b' > M+2$. This case will be concluded
by examining the extra edge $\gamma'b'$ such that $\gamma' < b'-1$ (present since $\pi(b'-1)\neq \edg$).
Crucially, $\sL_\pi $ does not contain an $M$-fan at the vertex 3 and so
 $\gamma' \neq 3$ (if $f$ is the maximal $f$-fan at $3$ then by definition the neighbors of vertex $3$ are precisely
$\{2,4,\ldots,3+f\}$, a set precluding $b'$). Since $b' \geq M+3\geq 6$ we further have $\gamma'\neq 1$ and
overall the extra edge $\gamma'b'$ further increments $e_1$ yielding $e_1\geq 2b'-4$. Since $b'\leq 2M+1$ this gives
$e_1/(b'-3) \geq 2+2/(b'-3) \geq 2+1/(M-1)$, contradicting~\eqref{eq:reqspec}.
\end{inparaenum}

\item\label{it-two-e-copies} $\pi$ contains two copies of $\edg$:

Here $|V_B|=|V(\sL_\pi)|=M+3$ and $e_\pi=2(|V_B|-2)-2 = 2M$. Again we claim that $1y,1z \notin E(\sL_\pi)$
and hence $E_B=E(\sL_{\pi})\setminus \{yz\}$.
To see this, observe that when $\pi$ begins with $\edg\edg$ the vertex $1$ has only $2$ as its neighbor, which
has a subset of $\{1,3,4\}$ as its neighbors due to the forbidden pattern $\edg10$. Therefore, in this case having
$z=M+2\geq 5$ implies that $y>2$ and $1y \notin E(\sL_\pi)$. Alternatively, if $\pi$ has 2 copies of $\edg$ and
yet $\pi(2)\neq \edg$, then the index $j>1$ such that $\pi(j)=\edg$ in fact satisfies $j>3$ (and hence is not
a neighbor of 1) due to the forbidden patterns $\edg0$ and $\edg1\edg$, and moreover $j<M+1$ due to
Definition~\ref{def:M-bnd}. The fact that the vertex $j$ is a cut-vertex of $\sL_\pi$ now leads to
the conclusion that $|E_B|=e_\pi - 1 = 2M-1 $, thus
\[ m_B = \frac{|E_B|}{|V_B|-4}% = \frac{2M+1}M
= 2 + \frac{1}{M-1} \,.\]

\begin{inparaenum}[\bf{Case} (\ref{it-two-e-copies}.1):]
\item There are no copies of $ \edg$
in $ \{\pi(a-1), \ldots, \pi( b-2)\} $. Observe that $a\geq 3$ since $\pi(1)=\edg$.
By~\eqref{eq:starter} we have
\begin{equation}\label{eq:extra2}
 e_0/( b-a) \ge 2 + 1/(b-a) \geq 2 + 1/(M-1)
\end{equation}
(where we used the fact
that $b-a \leq M-1$ as $3 \leq a < b \leq M+2$), contradicting~\eqref{eq:req}.

\item $a \geq 3$ and the symbol $\edg$ appears (once) in $\{\pi(a-1), \ldots, \pi(b-2)\} $. Here there are no
later occurrences of $\edg$ and so the extra edge
$\gamma b$ (with $\gamma < b-1$) is present. Moreover, $\gamma \geq \ell$
where $a-1\leq \ell \leq b-2$ is the aforementioned index of $\edg$ in $\pi$, since, crucially, vertex $\ell$
is a cut-vertex of $\sL_\pi$.
It now follows that when $\ell \geq a$ the extra edge $\gamma b$ contributes to the count of $e_0$ and gives
$e_0 \geq 2(b-a)+1$, which as shown above (see \eqref{eq:extra2}) contradicts~\eqref{eq:req}.
 Conversely, if $\pi(a-1)=\edg$ then the
forward neighbors of $a-1$ are confined to the subset $\{a,a+1\}$. Consequently, as long as $b > a+1$ we again
get that $\gamma \geq a$ and the extra edge $\gamma b$ again leads to the sought inequality $e_0\geq 2(b-a)+1$
that yields a contradiction to \eqref{eq:req}.
It remains to deal with the case $\pi(a-1)=\edg$ and $b=a+1$, where we claim that $b < z$. To see this, observe
that if $b$ were the last vertex of $C$ (vertex $z=M+2$) then its neighbors would be $\{a-1,a\}$. However, since
$y\neq a$ (by the fact that $a\notin V_C$ whereas $y$ must belong to $C$ as it is a distinguished vertex) we would
then get $y=a-1$ and $\pi(y)=\edg$, contradicting the fact that $yz$ is a boundary edge
(see Definition~\ref{def:boundary}).
It now follows that $|\pi|\geq b = a+1 $ and since $\pi(a-1)=\edg$ we must have $\pi(a)=\pi(a+1)=1$ by definition of $\Pi$.
In particular, the edge $(a,a+2)$ belongs to $E(\sL_\pi)$ (and also to $B$, recalling that $a$ is not a
distinguished vertex).
To conclude this case argue as follows. Either $a+2 \in V_C$, in which case $e_0 = 3$ due to the neighbors
$\{a-1,a+1,a+2\}$ of the vertex $a$ and we have $e_0/(b-a) = 3 > 2+1/(M-1)$ as needed. Otherwise, letting $b'$
be the smallest index larger than $b=a+1$ such that $b'\in V_C$ (which must exist since $z=M+3\in V_C$), we
can consider the set $\{a, a+2,\ldots,b'-1\}$ and denote by $e_1$ the number of edges incident to it in $C$.
We must have $e_1/(b'-a-1) < m_B$ and yet $e_1 \geq 2(b'-a-1)+1$ since it includes two edges going backward
from each of $a+2,\ldots,b'-1$, the two edges $(a-1,a)$,$(a,a+1)$ and finally the edge $(b'-1,b')$. This
yields $e_1/(b'-a-1)\geq 2+1/(b'-a-1)$, and as $3\leq a < b' \leq M+2$ we get that
 $e_1/(b'-a-1) \geq 2+1/(M-1) = m_B$,
contradiction.

\item $a= 2$ and $b>3$. If the second occurrence of $ \edg $ does
not appear in $\{ \pi(2), \ldots, \pi(b-1) \}$ then the extra edge $\gamma b$ contributes to
$e_0$ (we are assured that $1 b\notin E(\sL_\pi)$ since $b > 3$) and again we obtain $e_0 \geq 2(b-a)+1$
to contradict~\eqref{eq:req}.
(Note that $b-a\leq M-1$ in this case --- hence the contradiction --- due to the assumption that $\pi$ features a
second copy of $\edg$ in one of the indices $\{b,\ldots,|\pi|-1\}$, in fact implying that $b-a \leq M-2$).
Assume therefore that the second occurrence
of $ \edg$ does appear in the sequence. Observe that the assertion $b<z=M+2$ still holds since otherwise
$V_C=\{1,z\}$ leaving out the distinguished vertex $y$. Now, if $ b+1 \in V_C$ then this vertex has
a neighbor in $ 2, \ldots , b-1 $, and the corresponding edge cancels the contribution of the extra copy
of $\edg$ to the right hand side of~\eqref{eq:starter}. This again gives $e_0 \geq 2(b-a)+1$ and leads to
the same contradiction (recall that $b \leq M+1$ and so $b-a \leq M-1$).
It remains to deal with the case $b+1 \notin V_C$. Again let $b'$ be the minimal index such that
 $b<b'\in V_C$ and let $e_1$ count the number of edges incident to $2,\ldots,b-1,b+1,\ldots,b'-1$, we must
have $e_1/(b'-3) < m_B$. However, $e_1$ includes two edges going backward from all but two of these $b'-3$
vertices (those corresponding to copies of $\edg$ that contribute just one backward edge), the extra edge
$\gamma b$ and the edges $(b,b-1)$ and $(b'-1,b')$. Overall, $e_1 \geq 2(b'-3)+1 \geq 2+1/(M-1) = m_B$ as before.

\item $a=2$ and $b=3$. Recall that the last vertex in $V_C$ is $z=M+2 \geq 5$, and examine the vertex $4$.
Regardless of whether $\pi$ begins with $\edg11$ or $\edg\edg11$, the vertex $2$ has $\{1,3,4\}$ as its neighbors
in $ E(\sL_\pi)$ and
so if $4\in V_C$ we immediately get that $e_0/(b-a) = 3 > m_B$, contradiction. Assume therefore that $4 \notin V_C$,
let $ b'>4$ be the minimal index beyond $4$ with $b' \in V_C$ (bearing in mind that $z\in V_C$ by definition) and let
$e_1$ be the number of edges incident to $2,4,\ldots,b'-1$ in $V_C$.

If the symbol $ \edg$ does not appear in $ \{\pi(4), \ldots ,\pi(b'-2) \}$ (it is forbidden from occurring at $\pi(3)$
as per the definition of $\Pi$) then $ e_1 \geq 2b'-5 $, since the vertices $4,\ldots,b'-1$ together send $2(b'-4)$
edges backwards, accompanied by the edges $12$, $23$ and $(b'-1,b')$. Thus, in this case
$e_1/(b'-3)\geq 2+1/(b'-3) \geq m_B$, as desired. Finally, if the symbol $\edg$ does appear in
$\{\pi(4),\ldots,\pi(b'-2)\}$ then the extra edge $\gamma' b'$ (satisfying $\gamma' < b'-1$) is present.
 Moreover, if we let $4\leq j \leq b'-2$ be the index such that $\pi(j)=\edg$ then $\gamma' \geq j$ since the
vertex $j$ is a cut-vertex in $\sL_\pi$. As such, the edge $\gamma' b'$ contributes to $e_1$, thus canceling the
 negative contribution of the extra $\edg$ symbol. That is, again $e_1 \geq 2b'-5$, eliminating this case.
\end{inparaenum}
\end{enumerate}
All cases above led to a contradiction, completing the proof of the lemma for ladders beginning with $\edg$.
It remains to prove the balance property for ladders beginning with $1$, which will follow from similar arguments yet here
the vertex 0 will play a delicate role.

\begin{enumerate}[(a)]
\item\label{it-0-no-e} $\pi$ contains no copies of $ \edg$:
Here $v_\pi=3M+1$ and $e_\pi = 6M-2$, and therefore $|V_B\setminus I_B|=3M-3$ (here we used the fact that
$y\neq 0,1$ --- and thus $|I_B|=4$ --- since $z=3M$ and forbidding an $M$-fan implies that $y\geq z-M = 2M$)
and $|E_B|=6M-3$. Therefore
\[ m_B = \frac{6M-3}{3M-3} = 2 + \frac1{M-1}\,. \]
As before, if we consider the first interval $\{a,\ldots,b-1\}$ missing from $V_C$ we must have $e_0/(b-a) < m_B$.
The lack of copies of $\edg$ implies that $e_0 \geq 2(b-a) + 1$ (with the additive 1 accounting for the edge $(b-1,b)$ as usual)
and so $b-a > M - 1$. This in turn implies that $b-a \geq M$ and so the extra edge $\gamma b$ lands in the interval $[a,b)$
(since $\gamma \geq b-M$ due to the forbidden $M$-fans). As such, we cannot possibly have $b=z$ since both of its neighbors are missing
from $V_C$ (whereas $y$ must belong to $V_C$ as a distinguished vertex). Therefore, $b \leq z-1 = 3M-1$ and, since $a\geq 2$
(again recall that $1$ is distinguished) we have $b-a \leq 3M-3$. Furthermore, this entitles us to consider the vertex $b+1$.
One more edge incident to this interval is needed to secure the contradiction: if $b+1\in V_C$ it immediately provides such an edge,
since its two neighbors are $\gamma' < b$ and $\gamma'\geq \gamma \geq a$. In this case we get
\[ \frac{e_0}{b-a} \geq 2 + \frac{3}{b-a} \geq 2 + \frac{3}{3M-3} = m_B\,,\]
as required. On the other hand, if $b+1\notin V_C$ we can again look at the next missing interval $\{b+1,\ldots,b'-1\}$ where $b'\in V_C$
satisfies $b' \leq z = 3M$ (hence $b'-a \leq 3M-2$). The edges $(b-1,b),\gamma b, (b'-1,b')$ now give the required contradiction since
\[ \frac{e_1}{b'-a-1} \geq 2 + \frac{3}{b'-a-1} \geq 2 + \frac{3}{3M-3} = m_B\,.\]

\item\label{it-0-one-e} $\pi$ contains 1 copy of $ \edg$:
We have $v_\pi=2M+2$ and $e_\pi = 4M-1$, thus $|V_B\setminus I_B|=2M-2$ (again $y\neq 0,1$
since $z=2M+1$ and $y\geq z-M = M+1$)
and $|E_B|=4M-2$, and as before we obtain
\[ m_B = \frac{4M-2}{2M-2} = 2 + \frac1{M-1}\,. \]

If $\edg\notin\{\pi(a-1),\ldots,\pi(b-2)\}$ then exactly as before we have $e_0 \geq 2(b-a)+1$, implying that
$b-a\geq M$ and so, if present, the extra edge $\gamma b$ has an endpoint in $[a,b)$. At the same time, $b \leq z-1 = 2M$ and so $b-a \leq 2M-2$ and
we can conclude that $e_0/(b-a) \geq 2 + 2/(2M-2) = m_B$, contradiction. However, $\gamma b$ may not exist --- this is the case iff $\pi(b-1)=\edg$.
Again then necessarily $b < z$ and we can turn to the vertex $b+1$. If in $V_C$, the enforced pattern $\edg11$ implies that $(b-1,b+1)$ belongs to $E_B$
and, counting this in place of $\gamma b$ we get $e_0 /(b-a)\geq 2 + 2/(2M-2)$ as before. If $b+1\notin V_C$ then joining $[a,b)$ to the next missing interval $[b+1,b')$ gives $e_1/(b'-a-1) \geq 2+ 2/(b'-a-1)$ on account of the edges $(b-1,b)$ and $(b'-1,b')$, and the desired inequality $e_1/(b'-a-1)\geq m_B$ follows from the fact that $b'-a \leq z-2 = 2M-1$.

If $\edg\in\{\pi(a-1),\ldots,\pi(b-2)\}$ we can only say that $e_0 \geq 2(b-a)$ since the contribution of the edge $(b-1,b)$ is negated by
the shortage of one edge due to the $\edg$ symbol. Nevertheless, if the extra edge $\gamma b$ is such that $\gamma \in [a,b)$ then
$e_0 \geq 2(b-a)+1$ and the argument used above implies that $b-a \geq M$ and $b \leq z-1 = 2M$. We now
consider $b+1$: it has a neighbor $\gamma'$ with $\gamma \leq \gamma' < b$, in particular satisfying $\gamma' \in [a,b)$. Thus,
 if $b+1\in V_C$ we are done by having $e_0/(b-a) \geq 2 + 2/(b-a) \geq m_B$. If
$b+1\notin V_C$ then together with the next missing interval we have $e_1/(b'-a-1) \geq 2+2/(b'-a-1)$ (accounting for the edges $\gamma b$,
$(b-1,b)$ and $(b'-1,b')$ and subtracting one edge for the $\edg$ symbol), and the fact that $b'-a \leq z-2 = 2M-1$ now gives the sought
inequality $e_1/(b'-a-1) \geq 2+1/(M-1) = m_B$.

Finally, if there is an $\edg$ symbol in $\{\pi(a-1),\ldots,\pi(b-2)\}$ and yet the extra edge $\gamma b$ leaps over the interval $[a,b)$
then necessarily $\pi(a-1)=\edg$ (since having $\pi(j)=\edg$ makes $j$ a cut-vertex) and $\gamma = a-1$. This can only occur when
$b = a+1$, and since neither of its neighbors can play the role of $y$ (having $a\notin V_C$ and $\pi(\gamma)=\edg$) we can
infer that $b < z$. The symbol $\edg$ must be followed by $11$ (in lieu of any additional $\edg$ symbols in $\pi$) and so the edge
$(a,b+1)$ belongs to $\sL_\pi$ (and to $B$, since $a \neq y$). Altogether, $e_0 = 3$ on account of the neighbors $\{a-1,a+1,a+2\}$ of $a$
and so $e_0/(b-a) = 3 > m_B$.

\item\label{it-0-two-e} $\pi$ contains 2 copies of $ \edg$:
Here $v_\pi=M+3$ and $e_\pi = 2M$, leading to $|V_B\setminus I_B|=M-1$ (now $z = M+2$ and $y\geq z-M = 2$)
and $|E_B|=2M-1$. We get that
\[ m_B = \frac{2M-1}{M-1} = 2 + \frac1{M-1}\,. \]
Let $0\leq s\leq 2$ denote the number of $\edg$ symbols in $\{\pi(a-1),\ldots,\pi(b-2)\}$.

If $s=0$ then $e_0\geq 2(b-a)+1$ and so $b-a > M-1$. In this case either $\gamma \in [a,b)$ or the edge $\gamma b$ does not exist,
either way precluding $b$ from being equal to $z$,
thus $b-a \leq (z-1)-2 = M-1$, contradiction.

If $s=1$ then a-priori we only have $e_0 \geq 2(b-a)$. If the extra edge $\gamma b$ falls in $[a,b-2]$ then
$e_0/(b-a) \geq 2+1/(b-a)$ and again $b-a > M-1$, while at the same time $b<z$ and so $b-a \leq M-1$.
If the extra edge leaps over $[a,b)$ then
necessarily $\pi(a-1)=\edg$ and $b=a+1$. As before, here we must have 3 edges incident to $a$ and so $e_0/(b-a)=3 > m_B$.
If there is no extra edge $\gamma b$ then $\pi(b-1)=\edg$ and $b \leq z-1=M+1$, thus we can again appeal to $b+1$ and either get an extra edge
$(b-1,b+1)$ from the pattern $\edg11$ whenever $b+1\in V_C$ (leading to $e_0/(b-a) \geq 2+1/(b-a) \geq 2+1/(M-1) = m_B$), or join $[a,b)$ with the next missing interval $[b+1,b')$ and for an extra edge $(b'-1,b')$ (leading to $e_1/(b'-a-1) \geq 2+1/(b'-a-1) \geq m_B$).

It remains to treat $s=2$, in which case clearly $\gamma\in[a,b)$ and so $b<z$. If $b+1\in V_C$ then it has a neighbor $\gamma'$ with
$\gamma\leq \gamma' < b$ and we arrive at $e_0 \geq 2(b-a) + 1$ (accounting for $(b-1,b),\gamma b, (\gamma',b+1)$ while subtracting 2
for the $\edg$ symbols). We have $b-a \leq (z-1)-2 = M-1$ and so $e_0 / (b-a) \geq 2 + 1/(M-1) = m_B$, as needed.
The final case has $b+1\notin V_C$, and combining $\{a,\ldots,b-1\}$ with the next missing interval $\{b+1,\ldots,b'-1\}$ now gives
$e_1 \geq 2(b'-a-1) + 1$, where we counted the edges $(b-1,b),\gamma b, (b'-1,b')$ and subtracted two edges for $\edg$ symbols.
We have $b'-a \leq z-2 = M$ and $e_1/(b'-a-1) \geq 2 + 1/(M-1) = m_B$, concluding the proof.   \qedhere
\end{enumerate}
\end{proof}

Let $M\geq 3$ be some arbitrarily large constant. We aim to establish control over the
number of labeled copies of every $M$-bounded triangular ladder along the
triangle removal process as long as
\begin{equation}  \label{eq-p-delta}
  p \geq p_M := n^{-\frac{1}{2} + \frac{1}{M} }\,.
\end{equation}
Recalling~\eqref{eq-backward-ext-density} observe that $\Sc_H(p) \geq n$ for any backward extension $H$ as long as we have $p \geq p_M$, with room to spare (more precisely, $\Sc_H(p_M) \geq n^{3/2 - 1/M}$ when the corresponding ladder $\sL_\pi$ has $|\pi|=M+1$ and is even larger when $|\pi|=2M$ or $|\pi|=3M-1$).
 Since $\zeta$ as defined in~\eqref{eq-zeta-def} satisfies $\zeta^{-1} = o(\sqrt{n})$, we have $\Sc_H > \zeta^{-\sqrt{\delta}}$
for any $t(p)$ with $p\leq p_M$ and any $0 <\delta<1$. In particular, in this regime $t\leq t_H^-(\sqrt{\delta})$ for any backward extension $H$, thus we may invoke Part~\eqref{it-fine} of Theorem~\ref{thm-Psi-H-estimate} to control the variables $\Psi_{H,\varphi}$ for all the extension graphs in $\cB_M$ (each of which has at most $L=3M+1$ vertices). Further note that at $t(p_M)$ we have $\zeta(t) = n^{-1/M+o(1)}$, thus the decay rate of $\zeta$
satisfies Eq.~\eqref{eq-zeta-assumption} and we can conclude:
\begin{corollary}
  \label{cor-backward-ext-pM}
For every $M\geq 3$ there is some $0<\delta(M) < 1$ such that w.h.p.\ the following holds: For every backward extension graph $B=B_{\pi,yz}$ with $\pi\in\sL_\pi$ and any corresponding injection $\varphi$ we have $|\Psi_{B,\varphi} - \Sc_B| \leq \Sc_B \zeta^\delta$ as long as $t \leq \tau_\star$ and $p(t) \geq p_M$.
\end{corollary}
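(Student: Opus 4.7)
The plan is to derive the corollary as an immediate consequence of combining Lemma~\ref{lem:balance} with Part~\eqref{it-fine} of Theorem~\ref{thm-Psi-H-estimate}, applied with the uniform size bound $L = 3M+1$. Indeed, every $M$-bounded triangular ladder has at most $3M$ vertices (attained when $|\pi|=3M-1$), so any backward extension graph $B_{\pi,yz}$ has $v_B \le 3M+1$, well within the size allowed by the theorem. Lemma~\ref{lem:balance} asserts that each such $B_{\pi,yz}$ is balanced, which is precisely the hypothesis required to invoke Part~\eqref{it-fine}. The theorem then supplies a constant $\delta = \delta(M) \in (0,1)$ such that, w.h.p.\ and uniformly over all such $B$, one has $|\Psi_{B,\varphi} - \Sc_B| \le \Sc_B \zeta^\delta$ whenever $t \le \tau_\star$ and $t \le t_B^-(\sqrt{\delta})$.

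The only remaining task is a numerical verification that the regime $p(t) \ge p_M$ is contained in $\{ t : t \le t_B^-(\sqrt{\delta}) \}$ uniformly over all backward extensions under consideration. Inspecting~\eqref{eq-backward-ext-density}, the \emph{smallest} scaling among these $B$'s corresponds to $|\pi|=M+1$, which gives $\Sc_B = n^{M-1} p^{2M-1}$. Evaluating at $p = p_M = n^{-1/2 + 1/M}$ yields
\[
\Sc_B(p_M) \;\ge\; n^{M-1}\, n^{(2M-1)(-1/2 + 1/M)} \;=\; n^{3/2 - 1/M},
\]
whereas at the same point $\zeta(p_M) = n^{-1/M}\log n$, so $\zeta^{-\sqrt{\delta}} = n^{\sqrt{\delta}/M + o(1)}$. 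Since $0<\delta<1$ and $M\ge3$, the gap $n^{3/2 - 1/M} \gg n^{\sqrt{\delta}/M}$ is comfortable, so $\Sc_B > \zeta^{-\sqrt{\delta}}$ and hence $t \le t_B^-(\sqrt{\delta})$ throughout $p(t) \ge p_M$.

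Finally, one should check that the hypothesis~\eqref{eq-zeta-assumption} governing the definition of $\tau_\star$ is not restrictive here: at $p = p_M$ we have $\log(1/\zeta) = (1/M + o(1))\log n$, so $\log(1/\zeta)/\log\log n \to \infty$ with plenty of room to spare throughout the range $p\ge p_M$.

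There is essentially no obstacle beyond this packaging: the corollary is the specialization of Theorem~\ref{thm-Psi-H-estimate}\eqref{it-fine} to the family of backward extensions arising from $\cB_M$, legalized by the balance property of Lemma~\ref{lem:balance} and a routine calculation showing that $p \ge p_M$ forces every relevant $\Sc_B$ to comfortably exceed the threshold $\zeta^{-\sqrt{\delta}}$.
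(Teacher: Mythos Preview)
Your proposal is correct and follows essentially the same route as the paper: invoke Lemma~\ref{lem:balance} for balance, apply Part~\eqref{it-fine} of Theorem~\ref{thm-Psi-H-estimate} with $L=3M+1$, and verify via~\eqref{eq-backward-ext-density} that $\Sc_B(p_M)\ge n^{3/2-1/M}$ comfortably exceeds $\zeta^{-\sqrt{\delta}}$ so that $t\le t_B^-(\sqrt{\delta})$ throughout $p\ge p_M$. One minor slip: an $M$-bounded ladder with $|\pi|=3M-1$ has $3M+1$ vertices (not $3M$), but your choice $L=3M+1$ is nonetheless correct.
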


Backward extensions are only defined for outer boundary edges.  For all other edges (side boundary, initial or interior)
we will need to keep track of the following.
\begin{definition}[forward extension]\label{def:forward-ext}
Let $ \sL_\pi $ be an $M$-bounded triangular ladder and let $yz \in E( \sL_\pi)$ with $y<z$ be an edge that
is not an outer boundary edge.  The {\bf forward extension from $yz$ to $ \sL_\pi$}, denoted by $F_{\pi,yz}$, is the graph
obtained by deleting from $\sL_ \pi$ all edges between vertices in the following set $I$ of distinguished vertices:
\begin{compactitem}[\indent$\bullet$]
\item If $ yz$ is an initial edge then $ I = \{0,1, \ldots, y\} \cup \{ z\} $.
\item If $ yz$ is a side boundary edge then $ I = \{0,1, \ldots, y\} \cup \{ z\} $ in case $\pi(y-2)\neq \emph{\edg}$
and otherwise $ I = \{0,1, \ldots, y-2\} \cup \{ y,z\} $.
\item If $ yz $ is an interior edge then $ I = \{0,1, \ldots, z\} $.
\end{compactitem}
\end{definition}

Similarly to the case of backward extension graphs, estimates to within a multiplicative factor of $1+O(\zeta^\delta)$ over forward extension graphs can be readily derived from Theorem~\ref{thm-Psi-H-estimate}.
\begin{corollary}\label{cor:Psi-forward}
Let $M\geq 3$ and let $0<\delta<1$ be the constant from Corollary~\ref{cor-backward-ext-pM}. Then w.h.p.\
for every forward extension graph $F=F_{\pi,yz}$ with $\pi\in\cB_M$ and any corresponding injection $\varphi$ we have
$\Psi_{F,\varphi} = (1+O(\zeta^\delta))\Sc_F$ as long as $t \leq \tau_\star$ and $p(t) \geq p_M$. Furthermore, when $yz$ is not a side boundary edge the stronger bound $\Psi_{F,\varphi} = (1+O(\zeta))\Sc_F$ holds throughout this regime.
\end{corollary}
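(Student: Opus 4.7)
The plan is to derive the first bound $\Psi_{F,\varphi}=(1+O(\zeta^\delta))\Sc_F$ by verifying that every forward extension graph $F=F_{\pi,yz}$ is balanced and invoking Part~\eqref{it-fine} of Theorem~\ref{thm-Psi-H-estimate}, and then to upgrade the error to $(1+O(\zeta))$ in the initial and interior subcases via an iterative count that uses only the degree and co-degree estimates of~\eqref{eq-codeg-ensemble-guarantee} and Theorem~\ref{thm:aux}.

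For the balance claim I would adapt the case analysis of Lemma~\ref{lem:balance}: assume toward contradiction that a proper subextension $C\subsetneq F$ satisfies $m_C>m_F$, let $[a,b)$ be the first interval of vertices missing from $V_C$, and count the edges $e_0$ incident to $[a,b)$ in $F$. The constraints that $\pi\in\cB_M$ forbids $M$-fans and carries at most two $\emph{\edg}$-symbols, together with the shape of the distinguished set $I$ in each of the three subcases of Definition~\ref{def:forward-ext}, enable one to derive $e_0/(b-a)\geq m_F$ in each subcase, contradicting $m_C>m_F$. A direct computation then gives $\Sc_F(p_M)\geq n^{\Omega(1)}$, which comfortably exceeds $\zeta^{-\sqrt{\delta}}=n^{(1/2+o(1))\sqrt{\delta}}$ for the small $\delta$ supplied by Corollary~\ref{cor-backward-ext-pM}; hence $t\leq t_F^-(\sqrt{\delta})$ whenever $p(t)\geq p_M$, qualifying the invocation of Part~\eqref{it-fine} of Theorem~\ref{thm-Psi-H-estimate}.

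For the sharper bound when $yz$ is an initial or interior edge, the key observation is that in both subcases $I=\{0,1,\ldots,z\}$ (in the initial case $\pi(y)=\emph{\edg}$ makes $y$ a cut-vertex and forces $z=y+1$), so $V(F)\setminus I=\{z+1,\ldots,|\pi|+1\}$ and each such vertex is appended to $\sL_\pi$ by the triangle/edge-gluing of Definition~\ref{def:ladders}. I proceed by induction on the order of vertex addition: given a partial injection extending $\varphi$ to $I\cup\{z+1,\ldots,k\}$, the number of valid images of vertex $k+1$ equals either a degree $Y_{\psi(k)}$ (when $\pi(k)=\emph{\edg}$) or a co-degree $Y_{\psi(u),\psi(k)}$ (when $\pi(k)\in\{0,1\}$), since $k+1$ has at most two previously placed neighbors in $F$ and they all lie in $I\cup\{z+1,\ldots,k\}$. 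By~\eqref{eq-codeg-ensemble-guarantee} and Theorem~\ref{thm:aux} each such factor is uniformly $(1+O(\zeta))$ times its scaling value, and pulling this factor out of the iterated sum over $|\pi|-z+1=O(M)=O(1)$ steps yields $\Psi_{F,\varphi}=(1+O(\zeta))\Sc_F$.

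The main obstacle, and the very reason this iterative route fails in the side-boundary case (forcing reliance on the weaker bound from the second paragraph), is that there $I$ contains both $y$ and $z$ yet omits the fan vertices between them, so when the last iteratively-placed fan vertex (namely $y+M-1$, or $y+M-2$ in the $\pi(y-2)=\emph{\edg}$ subcase) is appended its $F$-neighbors include the preceding fan vertex and both distinguished vertices $y,z$, producing a triple co-neighborhood count that is not controlled by~\eqref{eq-codeg-ensemble-guarantee}. Consequently the balance verification must be carried out with corresponding care in the two side-boundary subcases, keeping track of how the missing interval $[a,b)$ intersects the fan region $\{y+1,\ldots,z-1\}$ (and, when $\pi(y-2)=\emph{\edg}$, also the vertex $y-1$).
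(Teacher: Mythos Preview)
Your iterative argument for interior edges is correct and matches the paper. There are two issues, one minor and one substantive.

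The minor one: for initial edges you assert that $\pi(y)=\edg$ forces $z=y+1$, but this is false. When $\pi(y)=\edg$ and $\pi(y+1)=1$ the edge $(y,y+2)$ belongs to $\sL_\pi$, so one can have $z=y+2$, and then $I=\{0,\ldots,y\}\cup\{z\}$ omits $y+1$. The iterative route still works --- place $\psi(y+1)$ first using the co-degree $Y_{\psi(y),\psi(z)}$ (the only already-placed neighbors of $y+1$ in $F$ are $y$ and $z=y+2$), then continue with $z+1,\ldots,|\pi|+1$ --- and this is precisely what the paper does.

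The substantive gap is in the side-boundary case. Your plan there is to show that $F$ is balanced and invoke Part~\eqref{it-fine} of Theorem~\ref{thm-Psi-H-estimate}, but $F$ is \emph{not} balanced whenever $z\leq|\pi|$. Indeed (taking $\pi(y-2)\neq\edg$), the subextension $K\subset F$ on vertex set $\{0,1,\ldots,z\}$ carries the fan and has $v_K-\iota_K=M-1$ and $e_K=2(M-1)+1$, hence $m_K=2+\tfrac1{M-1}$; every vertex in $\{z+1,\ldots,|\pi|+1\}$ sends at most two backward edges, so $m_F<m_K$ strictly once that set is nonempty. Thus no adaptation of the Lemma~\ref{lem:balance} case analysis can succeed here. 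The paper's route is to exploit exactly this subextension $K$: it verifies that $K$ itself is strictly balanced with $\Sc_K=n^{M-1}p^{2M-1}\geq n^{7/6}$ throughout $p\geq p_M$, applies Theorem~\ref{thm-Psi-H-estimate} to $K$ to obtain $\Psi_{K,\varphi}=(1+O(\zeta^\delta))\Sc_K$, and then completes $K$ to $F$ by the same vertex-by-vertex iteration over $z+1,\ldots,|\pi|+1$ that you use in the interior case, incurring only further $(1+O(\zeta))$ factors. Your final paragraph correctly diagnoses that the triple co-neighborhood blocks pure iteration in the side-boundary case, but the cure is not balance of $F$ --- it is this two-stage decomposition through $K$.
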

\begin{proof}
Suppose first that $yz$ is an \emph{interior} edge of $\sL_\pi$. We wish to count all possible mappings $\psi\in\Psi_{F,\varphi}$ by iteratively exposing the images of $z+1,\ldots,|V(\sL_\pi)|$ in $V_G$. Following Definition~\ref{def:ladders} of triangular ladders we see that every vertex $j$ is attached to either 1 or 2 vertices among its predecessors $\{0,1,\ldots,j-1\}$ (depending on whether or not we then encounter the symbol $\edg$ in $\pi$). Therefore, given $\psi(1),\ldots,\psi(j-1)$ for some $j>z$, the number of possibilities for $\psi(j)$ is dictated by either $Y_u$ for some $u \in \{\psi(1),\ldots,\psi(j-1)\}$ or by $Y_{u,v}$ for some $u,v$ in this set.
Using~\eqref{eq-codeg-ensemble-guarantee} to control these variables up to a relative error of $1+O(\zeta)$ and iterating this expansion we get
\begin{equation}
  \label{eq-Psi-F}
  \Psi_{F,\varphi} = \left(1 + O\big(|V(\sL_\pi)|\, \zeta\big)\right)\Sc_F = \left(1+O(\zeta)\right)\Sc_F\,.
\end{equation}
This argument remains valid of course when $yz$ is an \emph{initial} edge with $y=z-1$, since again we begin with the distinguished set $I=\{0,1,\ldots,z\}$ whose images are predetermined by $\varphi$. On the other hand, if $yz$ is an initial edge with $y < z-1$ then necessarily $y = z-2$ (to see this observe that $\pi(y)=\edg$ by Definition~\ref{def:non-boundary-edges} and thus, since $\pi$ does not contain the substring $\edg 1 0$, the potential forward neighbors of $y$ are confined to the set $\{y+1,y+2\}$) and similarly the neighbors of $z-1$ in $\sL_\pi$ are precisely $y,z$.
Thus, the number of possibilities for $\psi(z-1)$ is given by $Y_{\psi(y)\psi(z)}$ and from this point on we can iteratively
expose the images of $z+1,\ldots,|V(\sL_\pi)|$ as above to establish~\eqref{eq-Psi-F}.

It remains to deal with the case where $yz$ is a \emph{side boundary} edge, which as we next describe demonstrates the necessity for a
careful analysis of $f$-fans. We focus on  the case $\pi(y-2)\neq\edg$, noting that the case $\pi(y-2)=\edg$ will follow by
a nearly identical argument. Recall that the distinguished vertex set is now $I=\{0,1,\ldots,y\}\cup\{z\}$.
By Definition~\ref{def:boundary}, here we have $y<z-1$, there is at most 1 occurrence of $\edg$ in $\pi$ and the ladder $\sL_{\pi_{z-1}}$ contains an $(M-1)$-fan at $y$. Therefore, if we attempt to construct a copy $\psi \in \Psi_{F,\varphi}$ iteratively as before, upon reaching the vertex $z-1$ we will find that its neighbors in $\sL_\pi$ are precisely $\{y,z-2,z\}$ and so, once the images of these vertices are preset, $\psi(z-1)$ is given by some variable $N_{uvw}$ which we may very well have no control over (our approximation within a relative error of $1+O(\zeta)$ for these variables ceases to be valid once $p<n^{-1/3+o(1)}$ as the triple co-neighborhoods become merely poly-logarithmic in size).

To remedy this issue we will rely on Theorem~\ref{thm-Psi-H-estimate}. Let $K$ be the subextension of $F$ that has $V_K=\{0,1,\ldots,z\}$. Since $I_K=I_F=\{0,1,\ldots,y\}\cup\{z\}$, each vertex $j\in \{y+2,\ldots,z-1\}$ has $\{y,j-1,j+1\}$ as its neighbors in $K$.
We can temporarily think of these edges as being directed by letting each such vertex $j$ send one edge to $j+1$ and another to $y$,
while the vertex $y+1$ does the same and sends an additional edge to some $\ell<y$. With this view it becomes clear that $e_K = 2(v_K-\iota_K)+1$ and furthermore that $K$ is a (strictly) balanced extension graph.
To conclude the proof, note that $v_K-\iota_K = z-y-1$ whereas $z=y+M$ since $z$ is the final vertex of an $(M-1)$-fan at $y$ (see Eq.~\eqref{eq-side-boundary-yz-relation} bearing in mind that we are in the case $\pi(y-2)\neq \edg$).
Plugging this into the expression for $v_K-\iota_K$ we obtain that $\Sc_K = n^{M-1} p^{2M-1}$.
Hence, as long as $p\geq p_M$ we have $\Sc_K \geq n^{(3M-2)/(2M)}$. This translates into a lower bound of at least $n^{7/6}$ by the assumption $M\geq 3$ and, since $\zeta^{-1}=o(\sqrt{n})$, this implies in turn that $t < t^{-}_K(\sqrt{\delta})$ for any $0<\delta<1$. We can therefore appeal to Part~\eqref{it-fine} of Theorem~\ref{thm-Psi-H-estimate} and get that
\[ \left|\Psi_{K,\varphi} - \Sc_K\right| \leq \Sc_K \zeta^\delta\]
where $\delta=\delta(M)$ is precisely the same constant as in Corollary~\ref{cor-backward-ext-pM}.
From this point we can complete $K$ into a copy of $F$ by iteratively enumerating over the options for $z+1,\ldots,|V(\sL_\pi)|$ as before, each time at a cost of a multiplicative factor of $1+O(\zeta)$. Altogether we deduce that
\[ \left|\Psi_{F,\varphi} - \Sc_F \right| \leq (\zeta^\delta + O(\zeta))\Sc_F = O(\zeta^\delta)\Sc_F\,,\]
as required. Essentially the same argument extends this estimate also to the case of a side boundary edge $yz$ with $\pi(y-2)=\edg$.
Now $I_F = \{0,1,\ldots,y-2\}\cup\{y,z\}$ yet $z=y+M-1$ (see~\eqref{eq-side-boundary-yz-relation}) and a closer look reveals that the vertices
$y-1,y+1,\ldots,z$ forming the $(M-1)$-fan play an identical role to that assumed by vertices $y+1,\ldots,z$ in the previous case (e.g., each can
be viewed as sending one edge to its successor in this list and another to $y$, the first vertex $y-1$ sends one additional to $\ell=y-2$
and so on).
Therefore, the subextension $K$ is again strictly balanced and has $e_K = 2(v_K-\iota_K)+1$.
Having $v_K-\iota_K = z-y$ and $z=y+M-1$ we again get $\Sc_K = n^{M-1} p^{2M-1} \geq n^{7/6}$ for all $p\geq p_M$, and the rest of the argument
holds unmodified.
\end{proof}

\section{Tracking triangular ladders}\label{sec:tracking-ladders}
\subsection{Expected change}\label{sec:ladders-expected}

For a labeled undirected graph $H$ and $u,v\in V_G$ write $\Psi_{H,uv}=\Psi_{H,uv}(t)$
to denote the random variable $\Psi_{H,\varphi}$ where $\varphi$ maps
$0\mapsto u$ and $1\mapsto v$. If $H=\sL_\pi$ we will use the abbreviated form $\Psi_{\pi,uv}$, and as usual we use $v_\pi$ and $e_\pi$ to denote $|V(\sL_\pi)|$ and $|E(\sL_\pi)|$, respectively.
Our aim in this subsection is to estimate the expected change in variables $ \Psi_{\pi,uv} $
within one step of the process for any $M$-bounded triangular ladder.

\begin{theorem}\label{thm:ladders-expected}
Let $M\geq 3$ and let $0<\delta<1$ be the constant from Corollaries~\ref{cor-backward-ext-pM} and~\ref{cor:Psi-forward}.
For any $\pi\in \cB_M$ let $\pi^-$ denote its $(|\pi|-1)$-prefix and define
\[X_{\pi,uv} = \Psi_{ \pi,uv} - \frac{\Sc_{\pi}}{\Sc_{\pi^-}}\Psi_{ \pi^-,uv} \]
where $ \Psi_{\emptyset,uv} := 1 $. Let $\zeta  = n^{-1/2} p^{-1} \log n$ and for some absolute constant $\kappa>0$ let
\[\tau_{\textsc{q}}^* = \tau_{\textsc{q}}^*(\kappa) = \min\left\{ t : \left| Q/(\tfrac16 n^3 p^3)-1\right| \geq \kappa\, \zeta^2 \right\}\,.\]
Let $i$ be some time such that $t(i) \leq \tau_{\textsc{q}}^*$ and $p(i) \geq p_M$.
Suppose that $\Psi_{\pi,uv}=(1+o(1))\Sc_\pi$ holds at time $i$ for all $\pi\in\cB_M$ and all $u,v\in V_G$. Then
\begin{equation}
  \label{eq-Delta-Psi(pi,uv)}
  \E[\Delta X_{\pi,uv}\mid\cF_i] = -(6-o(1))\frac{e_\pi}{n^2 p} X_{\pi,uv} + \Xi_0(\pi) + \Xi_1(\pi) + \frac{O\left(\zeta^{1+\delta}\right)}{n^2 p}\Sc_{\pi}
\end{equation}
for every $\pi\in\cB_M$ and every $u,v\in V_G$, where the correction term $\Xi_1(\pi)$ is given by
\[ \Xi_1(\pi)= -\frac{6+o(1)}{n^3 p^3} X_{\pi\circ 1,uv}\; \one_{\big\{\mbox{$(|\pi|,|\pi|+1)$ is an initial/interior edge}\big\}}\,,\]
whereas if there exists $y<|\pi|$ such that $(y,|\pi|+1)\in E(\sL_\pi)$ then
\begin{align*}
\Xi_0(\pi) = &-\frac{6+o(1)}{n^2 p} X_{\pi,uv} \;\one_{\big\{\mbox{$(y,|\pi|+1)$ is an initial edge}\big\}} \\
& -\frac{6+o(1)}{n^3 p^3} X_{\pi\circ 0,uv} \; \one_{\big\{ \mbox{$(y,|\pi|+1)$ is an interior edge}\big\}}\\
&-\frac{6+o(1)}{n^3 p^3} \frac{\Sc_{\pi}}{\Sc_{\pi_{y-1}\circ\emph{\edg}}} X_{\pi_{y-1}\circ\emph{\edg} 1,uv} \;  \one_{\big\{\mbox{$(y,|\pi|+1)$ is a side boundary edge, $\pi(y-2)\neq\emph{\edg}$}\big\}}\\
&-\frac{6+o(1)}{n^3 p^3} \frac{\Sc_{\pi}}{\Sc_{\pi_{y-2}\circ\emph{\edg}}} X_{\pi_{y-2}\circ\emph{\edg} 1,uv} \;  \one_{\big\{\mbox{$(y,|\pi|+1)$ is a side boundary edge, $\pi(y-2)=\emph{\edg}$}\big\}}
\end{align*}
and otherwise $\Xi_0(\pi)=0$.
\end{theorem}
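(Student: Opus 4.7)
The plan is to compute $\E[\Delta\Psi_{\pi,uv}\mid\cF_i]$ directly via inclusion--exclusion over the removed triangle, then subtract the deterministic change of $\alpha_\pi\Psi_{\pi^-,uv}$ with $\alpha_\pi:=\Sc_\pi/\Sc_{\pi^-}$, and regroup so as to expose the self-correcting drift and the explicit corrections $\Xi_0,\Xi_1$.

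First I would write, by inclusion--exclusion on which edge of the chosen triangle kills a given copy of $\sL_\pi$,
\[\E[\Delta\Psi_{\pi,uv}\mid\cF_i]=-\frac{1}{Q}\sum_{ab\in E_\pi}\sum_{\psi\in\Psi_{\pi,uv}}Y_{\psi(a)\psi(b)}+O(v_\pi^2\Psi_{\pi,uv}/Q),\]
where the error absorbs triangles of $G$ that hit two edges of $\sL_\pi$ simultaneously (their third vertex is forced into the image of $\psi$); since $Q=\tfrac16 n^3p^3(1+O(\zeta^2))$, this residual is $O(\zeta^2)\Sc_\pi/(n^2p)$, comfortably inside the claimed tolerance. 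For each edge $ab\in E_\pi$ the inner sum counts pairs $(\psi,w)$ with $w$ a common neighbor of $\psi(a),\psi(b)$; subtracting the $O(\Sc_\pi)$ contribution from $w\in\psi(V_\pi)$, it equals $\Psi_{H^*_{ab},uv}$, where $H^*_{ab}$ is $\sL_\pi$ augmented by a new vertex $\star$ adjacent to $a,b$ and rooted at $\{0,1\}$.

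I would then classify the edges $ab\in E_\pi$ and estimate $\Psi_{H^*_{ab},uv}$ accordingly. For every edge not incident to $|\pi|+1$, iterative exposition of vertices using the bounds in~\eqref{eq-codeg-ensemble-guarantee} gives $\Psi_{H^*_{ab},uv}=(1+O(\zeta))\Sc_{H^*_{ab}}=(1+O(\zeta))np^2\Sc_\pi$, except that for side-boundary edges Corollary~\ref{cor:Psi-forward} only yields $(1+O(\zeta^\delta))np^2\Sc_\pi$; in the side-boundary case $H^*_{ab}$ can be identified, via the $(M-1)$-fan at $y$ and~\eqref{eq-side-boundary-yz-relation}, with a copy of $\sL_{\pi_{y-1}\circ\edg\,1}$ (respectively $\sL_{\pi_{y-2}\circ\edg\,1}$ when $\pi(y-2)=\edg$) rooted at $\{0,1\}$, so that $\Psi_{H^*_{ab},uv}=(\Sc_\pi/\Sc_{\pi_{y-1}\circ\edg})\,\Psi_{\pi_{y-1}\circ\edg\,1,uv}+O(\Sc_\pi)$ produces the side-boundary terms of $\Xi_0$ with the prefactors as stated. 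The one or two edges incident to $|\pi|+1$ are handled directly: $(|\pi|,|\pi|+1)$, when initial or interior, matches $H^*_{ab}=\sL_{\pi\circ 1}$ and yields $\Psi_{\pi\circ 1,uv}+O(\Sc_\pi)$; the secondary edge $(y,|\pi|+1)$ present when $\pi(|\pi|)\in\{0,1\}$ matches $\sL_{\pi\circ 0}$ in the interior case and matches the side-boundary prescription above when applicable, while in the initial case the cut-vertex at $y$ forces the sum $\sum_\psi Y$ to decompose as $\Psi_{\pi,uv}+\Psi_{H^*_{ab},uv}$ with the first summand contributing the first term of $\Xi_0$ after the substitution below. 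In each case I would substitute $\Psi_{\pi',uv}=\alpha_{\pi'}\Psi_{\pi,uv}+X_{\pi',uv}$ from the definition of $X_{\pi',uv}$: the $\alpha_{\pi'}\Psi_{\pi,uv}$ parts consolidate with the typical-edge contributions into the bulk drift $-\tfrac{6e_\pi}{n^2p}\Psi_{\pi,uv}$, while the $X_{\pi',uv}$ parts, divided by $Q$, produce exactly the terms of $\Xi_0$ and $\Xi_1$ with the $(6+o(1))$ constants coming from $1/Q=(6+O(\zeta^2))/(n^3p^3)$.

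Finally, $\E[\Delta(\alpha_\pi\Psi_{\pi^-,uv})]=\alpha_\pi\E[\Delta\Psi_{\pi^-,uv}]+\Psi_{\pi^-,uv}\Delta\alpha_\pi$ with $\Delta\alpha_\pi$ deterministic and given by the analogue of~\eqref{eq-Delta-SH}; applying the previous two paragraphs to $\pi^-$ yields a bulk drift $-(6+o(1))\tfrac{e_{\pi^-}}{n^2p}\alpha_\pi\Psi_{\pi^-,uv}$ whose $\Xi$-style corrections involve only edges strictly inside $\sL_\pi$, hence are already accounted for in the forward-extension step for $\pi$ and contribute nothing new to $\E[\Delta X_{\pi,uv}]$ beyond the $O(\zeta^{1+\delta})$ residual. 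Combined with $\Psi_{\pi^-,uv}=(1+o(1))\Sc_{\pi^-}$, the two bulk drifts cancel cleanly to $-\tfrac{6e_\pi}{n^2p}X_{\pi,uv}$, giving the claimed identity. The main obstacle is the case-by-case combinatorial identification on the secondary edge, in particular the side-boundary identification of the augmented graph $H^*_{ab}$ with $\sL_{\pi_{y-1}\circ\edg\,1}$ (or its $\edg$-twin) together with the correct scaling prefactor: this is a labeled-graph isomorphism check driven by the fan geometry of Definition~\ref{def:fan} and the cut-vertex structure induced by $\edg$, with no shortcut around enumerating the cases.
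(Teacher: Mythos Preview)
Your proposal has a genuine gap in the treatment of the edges $ab$ with $b\le|\pi|$ (the ones common to $\sL_\pi$ and $\sL_{\pi^-}$). You estimate $\sum_{\psi\in\Psi_{\pi,uv}}Y_{\psi(a)\psi(b)}=\Psi_{H^*_{ab},uv}=(1+O(\zeta))np^2\Sc_\pi$ directly by iterated co-degree exposition. That estimate is correct, but its $O(\zeta)$ error is an error in an \emph{ad hoc} random variable $\Psi_{H^*_{ab},uv}$ that is not a tracked ladder. When you do the same for $\pi^-$ and subtract, the two $O(\zeta)$ errors live in \emph{different} random variables ($H^*_{ab}(\pi)$ versus $H^*_{ab}(\pi^-)$, which differ by a vertex) and there is no mechanism for them to cancel. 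The residual is $O(\zeta)\Sc_\pi/(n^2p)$, which is the same order as the drift term $-(6-o(1))\tfrac{e_\pi}{n^2p}X_{\pi,uv}$ itself (recall $|X_{\pi,uv}|=O(\zeta\Sc_\pi)$). This swamps the claimed $O(\zeta^{1+\delta})\Sc_\pi/(n^2p)$ error and would void the supermartingale argument downstream.

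The paper's device for these common edges is different and is exactly what produces the extra factor of $\zeta$. For each such edge $yz$ it writes
\[
\E[\Delta_{yz}\Psi_{\pi,uv}\mid\cF_i]\;=\;-\sum_{\varphi\in\Psi_{\pi_{z-1},uv}}\frac{Y_{\varphi(y)\varphi(z)}}{Q}\,\Psi_{F_{\pi,yz},\varphi}
\]
and then applies the product lemma (Lemma~\ref{lem:jpatrick}) with $x_\varphi=Y_{\varphi(y)\varphi(z)}/Q$ and $y_\varphi=\Psi_{F_{\pi,yz},\varphi}$, each uniform to $O(\zeta)$ (or $O(\zeta^\delta)$ for side boundary) by Corollary~\ref{cor:Psi-forward}. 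This yields a main term $-\dfrac{\Psi_{\pi_{z-1}\circ b_{yz},uv}}{\Psi_{\pi_{z-1},uv}\,Q}\,\Psi_{\pi,uv}$ plus an $O(\zeta^{2})$ (resp.\ $O(\zeta^{1+\delta})$) error. The point is that the multiplicative factor in front depends only on the prefix $\pi_{z-1}$ and is therefore \emph{identical} for $\pi$ and $\pi^-$; after subtracting, it multiplies $\Psi_{\pi,uv}-\alpha_\pi\Psi_{\pi^-,uv}=X_{\pi,uv}$ exactly, and only the $O(\zeta^{1+\delta})$ residual survives. Your direct estimate of $\Psi_{H^*_{ab},uv}$ loses precisely this common-factor structure. (A secondary issue: your identification of $H^*_{ab}$ with $\sL_{\pi_{y-1}\circ\edg 1}$ in the side-boundary case is not right as stated---the two graphs have different vertex counts; the paper instead routes through the forward extension $F_{\pi,yz}$ rooted at $\{0,\ldots,y,z\}$ and again uses Lemma~\ref{lem:jpatrick} to pull out the $\Psi_{\pi_{y-1}\circ\edg 1,uv}$ factor.)
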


Our proof will crucially rely on the properties of triangular ladders in order to express the number of copies of $\sL_\pi$ that rest on a given edge in terms of the total number of other triangular ladders.
To demonstrate this we use the following simple lemma of~\cite{BB} whose proof is provided for completeness.
\begin{lemma}
\label{lem:jpatrick}
Suppose $ (x_i)_{i\in I} $ and $ (y_i)_{i \in I}$ are real numbers so that
$|x _i - x| \le \delta_1$ and $|y_i-y| < \delta_2$ for some $x,y\in\R$ and every $i \in I$.  Then
\[ \bigg| \sum_{i \in I} x_i y_i -  \frac{1}{|I|} \Big( \sum_{i \in I} x_i \Big)
\Big( \sum_{i \in I} y_i \Big) \bigg|  \le  2 |I| \delta_1 \delta_2 \,.\]
\end{lemma}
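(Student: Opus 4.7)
The plan is a short, direct computation based on writing the left-hand side as a centered covariance. Let $\bar{x} = \frac{1}{|I|}\sum_{i\in I} x_i$ and $\bar{y} = \frac{1}{|I|}\sum_{i\in I} y_i$. First I would expand the product on the right-hand side of
\[
\sum_{i\in I} x_i y_i - \frac{1}{|I|}\Big(\sum_{i\in I} x_i\Big)\Big(\sum_{i\in I} y_i\Big) \;=\; \sum_{i\in I}(x_i - \bar{x})(y_i - \bar{y}),
\]
to reduce the problem to bounding the right-hand sum. The identity is purely formal and sets up the rest of the proof.

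Next, I would exploit the vanishing first moment $\sum_{i\in I}(x_i - \bar{x}) = 0$ to replace $\bar{y}$ by the (unknown) target $y$: since $(y - \bar{y})$ is constant in $i$,
\[
\sum_{i\in I}(x_i - \bar{x})(y_i - \bar{y}) \;=\; \sum_{i\in I}(x_i - \bar{x})(y_i - y).
\]
This asymmetric replacement is the key step, and it is what shaves the constant from $4$ down to $2$: if one instead bounds $|y_i - \bar{y}|$ via the triangle inequality one loses a factor of two. The remainder is standard: by the triangle inequality $|x_i - \bar{x}| \le |x_i - x| + |x - \bar{x}| \le 2\delta_1$ (the second term since $\bar{x}$ is an average of quantities each within $\delta_1$ of $x$), while the hypothesis gives $|y_i - y| < \delta_2$ directly.

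Taking absolute values inside the sum and using these two pointwise bounds yields
\[
\bigg|\sum_{i\in I}(x_i - \bar{x})(y_i - y)\bigg| \;\le\; \sum_{i\in I} 2\delta_1 \cdot \delta_2 \;=\; 2|I|\delta_1\delta_2,
\]
which is exactly the claimed bound. I do not anticipate any real obstacle; the only thing one needs to notice is the asymmetric recentering step, which uses the $\delta_1$-budget on both $x_i$ and $\bar{x}$ while leaving the $\delta_2$-budget untouched on the $y$ side.
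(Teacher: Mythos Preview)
Your proof is correct. The paper's argument is a minor algebraic variant: instead of centering at the sample means and then using the zero-mean trick to shift one factor back to $y$, the paper centers both factors at the fixed targets $x,y$ from the start, bounds $\bigl|\sum_i (x_i-x)(y_i-y)\bigr|\le |I|\delta_1\delta_2$ directly, and then observes that this sum differs from the desired quantity by the single correction term $|I|(\bar{x}-x)(\bar{y}-y)$, which is also at most $|I|\delta_1\delta_2$ in absolute value. So the paper gets $2|I|\delta_1\delta_2$ as a sum of two symmetric $|I|\delta_1\delta_2$ contributions, whereas you get it as $|I|\cdot(2\delta_1)\cdot\delta_2$ via the asymmetric recentering. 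Both routes are equally short; your version has the small aesthetic advantage of making explicit why the constant is $2$ rather than $4$.
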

\begin{proof}
%We consider the function $$z= \displaystyle \sum_{i\in I} x_i y_i$$ and suppose we know that
%$$x_i \in x \pm \delta, y_i \in y \pm \epsilon$$ for some $x > \delta>0$ ,and  $y> \epsilon > 0$.
By the triangle inequality, $\left| \sum_{i \in I} (x_i - x)(y_i - y) \right| \leq |I|\delta_1 \delta_2$,
and at the same time it is easy to see that $\sum_{i\in I} (x_i-x)(y_i-y)$ is precisely equal to
\begin{align*}
\sum_{i\in I} x_i y_i - \frac1{|I|}\Big(\sum_i x_i\Big)\Big(\sum_i y_i\Big) +
|I|\bigg(\frac{\sum_i x_i}{|I|}-x\bigg)\bigg(\frac{\sum_i y_i}{|I|}-y\bigg)\,.
\end{align*}
The hypothesis on the $x_i$'s and $y_i$'s implies that the last term above is at
most $|I|\delta_1 \delta_2$ in absolute value, from which the desired result follows.
\end{proof}

Prior to commencing with the proof of Theorem~\ref{thm:ladders-expected} we wish to
illustrate the role that the backward/forward extensions play in the analysis of the variables $\Psi_{\pi,uv}$.

\begin{example}
  [expected one-step change in $\Psi_{\edg\edg}$]
In general, in order to estimate the one-step change in $\Psi_{\pi,uv}$ we examine the edges of the given ladder and for each one consider the probability of hitting this edge (as part of some labeled copy of the given ladder rooted at $u,v\in V_G$) by the current triangle as well as the number of copies of the ladder that it participates in.
For $\sL_{\edg\edg}$ we write separate summations for the two edges $12$ and $23$ to get
\begin{align*}
  \E [\Delta \Psi_{\edg \edg,uv}\mid\cF_i] &= -\sum_{a \in N_u} \frac{Y_{u,a} (Y_a-1)}{Q} -
  \sum_{
  \substack{ a\in N_u \\ b \in N_a \setminus \{u\} }
  } \frac{Y_{a,b} - \one_{\{ua \in E\}}}Q  \\
  &= -\sum_{a \in N_u} \frac{Y_{u,a} (Y_u-1)}{Q} - \frac{ \Psi_{\edg \edg 1,uv}}Q  + O\left(\frac{1}{n^3 p^3}\right)\Sc_{\edg\edg}\,,
\end{align*}
where the indicators $\one_{\{ua\in E\}}$ treat the double counting of a triangle that contains both these edges, and the last equality rewrote their sum as $O(\Sc_{\edg\edg}/(n^3p^3))$ via the hypothesis on the degrees and $Q$. We now apply Lemma~\ref{lem:jpatrick} with $ I = N_u$, the sequence $ x_{a} = Y_{u,a}$ with $ x = np^2$ and $\delta_1 =  np^2  O(\zeta)$
and the sequence $ y_{a} = Y_a$ with $y = np$ and $ \delta_2 = np O(\zeta)$ (the variables $\Psi_{\pi,uv}$
count labeled graphs, thus for instance $\sum_{a\in N_u} Y_{u,a} = \Psi_{\edg 1,uv}$). Since $|I|\delta_1\delta_2/Q = O(\zeta^2 p)$ while $\Sc_{\edg\edg} = n^2p^2$ this gives
\begin{align*}
  \E [\Delta \Psi_{\edg \edg,uv}\mid\cF_i] &= -\frac{ \Psi_{\edg 1,uv}\Psi_{\edg \edg,uv}}{Y_u Q} - \frac{ \Psi_{\edg \edg 1,uv}}Q
+ \frac{O \left(\zeta^2\right)}{n^2p}\Sc_{\edg \edg}  \,.
\end{align*}
(The $O(\frac{1}{n^3 p^3})$-term was absorbed by the $O(\frac{\zeta^2}{n^2 p})$-term as the latter has order $\frac{\log^2 n}{n^3 p^3}$ by Def.~\eqref{eq-zeta-def}.)
%Similarly, for $\sL_{\edg 1}$ we have
%\begin{align*}
%  \E [\Delta \Psi_{\edg 1,uv}\mid\cF_i] &= -\sum_{a \in N_u} \frac{Y_{u,a}(2Y_{u,a}-1)}{Q} - \frac{\Psi_{\edg 11,uv}}Q \,,
%  \end{align*}
%where the summation treat the edges $12$ and $13$ simultaneously (hence the term $2Y_{u,a}$ which counts the number of triangles incident to $u,a$ in copies where $2 \mapsto a$ and in those where $3\mapsto a$. The $(-1)$ correction term handles the triangle $uaw$ that gets selected in the present round, as this triangle would be counted twice already by $a,w\in N_u$).
%As above, Lemma~\ref{lem:jpatrick} allows us to estimate this sum and get that
%\begin{align}\label{eq-E-D-e1}
%  \E [\Delta \Psi_{\edg 1,uv}\mid\cF_i] &= -\frac{2 \Psi_{\edg 1,uv}^2}{Y_u Q} - \frac{\Psi_{\edg 11,uv}}Q
%  + \frac{O \left(\zeta^2\right)}{n^2 p} \Sc_{\edg 1}
%  \,.
%\end{align}
\end{example}

We stress that many of our applications of Lemma~\ref{lem:jpatrick} have this
common form.
We are computing the expected change in
some variable $ \Psi_{\pi,uv} $ due to the loss of a
particular edge in the triangular ladder.  We compute this
as a sum over
such edges $yz \in E(\sL_\pi)$. For each one we break up the ladder $\sL_\pi$ into a smaller subgraph $K$ that contains $u,v$ and the edge $yz$, then proceed to \begin{inparaenum}
  [(i)]
  \item count the number of labeled copies of $K$ in $G$ rooted at $u,v$ (thereby yielding an estimate on the probability of removing such an edge in the current round), denoted by some $\Psi_{K,uv}$, and
  \item  count the number of extensions from $K$ to $\sL_\pi$ in $G$ (giving the total number of copies of $\sL_\pi$ eliminated by removing the edge under consideration), denoted by some $\Psi_{\sL_\pi / K}$.
\end{inparaenum}
Observe that, by definition, $\sum_{\psi\in\Psi_{K,uv}} \Psi_{\sL_\pi / K,\psi} = \Psi_{\pi,uv}$.
Moreover, given the injection $\psi:K\to V_G$ the probability to remove $\psi(y)\psi(z)$ is equal to $Y_{\psi(y)\psi(z)}/Q$, thus we arrive at the expression
\[\sum_{\psi\in \Psi_{K,uv}} \frac{Y_{\psi(y)\psi(z)}}Q \Psi_{\sL_\pi/K,\psi}\,.\]
Applying Lemma~\ref{lem:jpatrick} on this summation we now get a main term which is a product of $\Psi_{\pi,\psi}$ (the result of $\sum_\psi \Psi_{\sL_\pi/K,\psi}$) and $\Psi_{K',uv}$, where $K'$ is obtained from $K$ by attaching to it a new vertex incident to the edge $yz$ (the result of $\sum_\psi Y_{\psi(y)\psi(z)}$). We now see the reason behind our definition of the family of triangular ladders, since if $K$ is a triangular ladder then $K'$ will commonly also belong to our family of tracked variables. As an error term in this analysis we will sustain the approximation error in $Y_{\psi(y)\psi(z)}/Q$, i.e., an $O(\zeta/(n^2 p))$ term, multiplied by $\Psi_{K,uv}$ and then by the approximation error for $\Psi_{\sL_\pi/K}$. Usually we will choose $K$ so that $\sL_\pi/K$ will be the forward extension $F_{\pi,yz}$ introduced in the previous section, whose corresponding variable $\Psi_{F,\varphi}$ was estimated there to within an error of $O(\zeta^\delta)\Sc_F$ uniformly over the injection $\varphi$. Overall, we get the desired
$ O(\zeta^{1+\delta}/(n^2 p))\Psi_{\pi,uv} $ error-term.

We are now ready to prove the main result of this subsection.
\begin{proof}[\emph{\textbf{Proof of Theorem~\ref{thm:ladders-expected}}}]
By hypothesis we control the variables $Y_u=\Psi_{\edg,uv}$, $Y_{u,v}=\Psi_{1,uv}$ and $Q$ to within a multiplicative factor of $1 + O(\zeta)$.
Via Corollaries~\ref{cor-backward-ext-pM} and~\ref{cor:Psi-forward} this guarantees estimates to within a factor of $1+O(\zeta^\delta)$ in backward and forward extension variables $B_{\pi,yz}$ and $F_{\pi,yz}$.
(Forward extensions of initial/interior edges feature a better approximation of $1+O(\zeta)$.)

As mentioned above, we will analyze the different possible edges $yz\in E(\sL_\pi)$ whose images in $E(G)$ (under any mapping $\psi\in \Psi_{\pi,uv}$) may be removed in the upcoming step, and the possible impact this event would have on $X_{\pi,uv}$.

Given $\cF_{i}$, for each $\psi\in \Psi_{\pi,uv}$ we define the events
\begin{equation}
  \label{eq-cE-psi-def}
  \cE_{\psi} = \bigcup\big\{ \cE_\psi^{yz} : yz \in E(\sL_\pi)\big\} \quad\mbox{where}\quad
\cE_{\psi}^{yz} = \big\{ \psi(y)\psi(z)\mbox{ is removed at time $i$}\big\}\,,
\end{equation}
and for each edge $yz\in E(\sL_\pi)$ further set
\[ \Delta_{yz} \Psi_{\pi,uv} = -\sum_{\psi\in \Psi_{\pi,uv}} \one_{\cE_{\psi}^{yz}}\,.\]
According to these definitions, $\Delta \Psi_{\pi,uv} = - \sum_{\psi\in \Psi_{\pi,uv}} \one_{\cE_\psi}$. At the same time, we crucially have
\begin{equation}
  \label{eq-cE-approx}
  \P(\cE_{\psi}\mid\cF_i) = \sum_{yz\in E(\sL_\pi)} \P(\cE_\psi^{yz}\mid\cF_i) + O(1/Q)
\end{equation}
since $\P(\cE_\psi^{yz}\,,\,\cE_\psi^{y'z'}\mid\cF_i) \leq 1 / Q$ for any distinct edges $yz,y'z'\in E(\sL_\pi)$ (the event that both are removed identifies a single triangle in $G$), and there are $O(e_\pi^2) = O(1)$ such pairs. It follows that
\begin{align}
  \E [\Delta \Psi_{\pi,uv}\mid\cF_i] &=  \sum_{yz\in E(\sL_\pi)} \E[\Delta_{yz}\Psi_{\pi,uv}] + O(\Psi_{\pi,uv}/Q) =  \sum_{yz\in E(\sL_\pi)} \E[\Delta_{yz}\Psi_{\pi,uv}] + \frac{o(\zeta^2)}{n^2 p}\Psi_{\pi,uv}\,,
    \label{eq-Dyz-Psi}
\end{align}
using the fact that $Q \asymp n^3 p^3$ whereas $\zeta^{-2} n^2 p \asymp n^3 p^3 \log^2 n$.
Our ultimate goal in this theorem is to estimate the one-step change in $X_{\pi,uv} = \Psi_{\pi,uv} - (\Sc_{\pi}/\Sc_{\pi^-})\Psi_{\pi^-,uv}$.
As usual we write the change from time $i$ to time $i+1$ as
\[ \Delta X_{\pi,uv} = \Delta \Psi_{\pi,uv} - \frac{\Sc_{\pi}(i+1)}{\Sc_{\pi^-}(i+1)}\Delta \Psi_{\pi^-,uv}
- \Psi_{\pi^-,uv} \Delta\left(\frac{\Sc_{\pi}}{\Sc_{\pi^-}}\right)\,.\]
Since we have $\Sc_{\pi}/\Sc_{\pi^-} = n p^\theta$ where $\theta = 1$ if the last symbol in $\pi$ is $\edg$ and otherwise $\theta =2$, the change in the last term is deterministically given by
\[ \Psi_{\pi^-,uv} \Delta\left(\frac{\Sc_{\pi}}{\Sc_{\pi^-}}\right) = -\theta n p^{\theta-1} \left(\frac{6}{n^2}\right)\left(1 - O\Big(\frac{1}{n^2 p}\Big)\right) \Psi_{\pi^-,uv} = -\frac{6\theta-o(\zeta^3)}{n^2 p} \frac{\Sc_{\pi}}{\Sc_{\pi^-}}\Psi_{\pi^-,uv}\,,
\]
relying in the last equality on the fact that $\zeta^{-3} = n^{\frac32-o(1)}p^3 = o(n^2 p)$. Adding the similar deterministic estimate $(\Sc_{\pi}/\Sc_{\pi^-})(i) = (1-o(\zeta^3))(\Sc_\pi/\Sc_{\pi^-})(i-1)$  we can rewrite the equation for $\Delta X_{\pi,uv}$ as
\begin{equation}
  \label{eq-DX}
  \Delta X_{\pi,uv} = \Delta \Psi_{\pi,uv} - (1+o(\zeta^3))\frac{\Sc_{\pi}}{\Sc_{\pi^-}}\Delta \Psi_{\pi^-,uv}
 +\frac{6\theta-o(\zeta^3)}{n^2 p} \frac{\Sc_{\pi}}{\Sc_{\pi^-}}\Psi_{\pi^-,uv}\,.
\end{equation}
Finally, bearing in mind that $\theta$ counts the number of edges that are incident to the vertex $|\pi|+1$, we can break up the expected change in $\Delta \Psi_{\pi,uv}$ and $\Delta \Psi_{\pi^-,uv}$ as per equation with~\eqref{eq-Dyz-Psi} and get
\begin{align*}
  \E[\Delta X_{\pi,uv}\mid\cF_i] &=
  \sum_{\substack{yz\in E(\sL_\pi) \\ y<z \leq |\pi|}} \left(\E\left[\Delta_{yz}\Psi_{\pi,uv}\mid\cF_i\right] - (1-o(\zeta^3))\frac{\Sc_{\pi}}{\Sc_{\pi^-}}\E\left[\Delta_{yz}\Psi_{\pi^-,uv}\mid\cF_i\right]\right) \nonumber\\
 &+ \sum_{\substack{yz\in E(\sL_\pi) \\ y<z = |\pi|+1}} \left(\E\left[\Delta_{yz}\Psi_{\pi,uv}\mid\cF_i\right] + \frac{6-o(\zeta^3)}{n^2 p}\frac{\Sc_{\pi}}{\Sc_{\pi^-}}\Psi_{\pi^-,uv} \right)
   + \frac{o(\zeta^2)}{n^2 p}\Psi_{\pi,uv}\,.
\end{align*}
(Notice that the first sum goes over all edges of $\sL_{\pi^-}$, leaving $\theta$ edges of $E(\sL_\pi)\setminus E(\sL_{\pi^-})$ for the second sum. This allows us to divide the last term of~\eqref{eq-DX} to each of these latter summands.)
By assumption we have $(\Sc_{\pi}/\Sc_{\pi^-})\Psi_{\pi^-,uv}=(1+o(1))\Psi_{\pi,uv}$ and so the $o(\zeta^3)$ error terms above are easily absorbed in the last $o(\zeta^2)$ term to give the following:
\begin{align}
  \E[\Delta X_{\pi,uv}\mid\cF_i] &=
  \sum_{\substack{yz\in E(\sL_\pi) \\ y<z \leq |\pi|}} \left(\E\left[\Delta_{yz}\Psi_{\pi,uv}\mid\cF_i\right] - \frac{\Sc_{\pi}}{\Sc_{\pi^-}}\E\left[\Delta_{yz}\Psi_{\pi^-,uv}\mid\cF_i\right]\right) \nonumber\\
 &+ \sum_{\substack{yz\in E(\sL_\pi) \\ y<z = |\pi|+1}} \left(\E\left[\Delta_{yz}\Psi_{\pi,uv}\mid\cF_i\right] + \frac{6}{n^2 p}\frac{\Sc_{\pi}}{\Sc_{\pi^-}}\Psi_{\pi^-,uv} \right)
   + \frac{o(\zeta^2)}{n^2 p}\Psi_{\pi,uv}\,.
  \label{eq-E-DX}
\end{align}
This identity becomes particularly useful in light of the fact that if $yz\in E(\sL_{\pi^-})$ then $yz$ has the same edge classification in both $\sL_\pi$ and $\sL_{\pi^-}$. (Indeed, the criterion for classifying $yz$ as a boundary edge is a function of $\pi_{z-1}$ and the values of $y,z$, whereas initial edges are identified by $\pi(y)$.) We now proceed to estimate the appropriate quantities in~\eqref{eq-E-DX} for each $yz\in E(\sL_\pi)$ depending on the type of the edge under consideration.

\begin{itemize}[\noindent$\bullet$]%[(1)]
\item \emph{Initial edges}:
If $yz$ is an initial edge with $y<z$ then $\pi(y)=\edg$ and $z\in\{y+1,y+2\}$.
Consider the case $z=y+1$ and examine $\pi_y$, the length-$y$ prefix of $\pi$.
We will count the number of (labeled, rooted at $u,v$) copies of $\sL_{\pi_y}$ to estimate the number of edges that can play the role of $yz$ in $G$. Given some mapping $ \varphi \in \Psi_{\pi_y,uv} $, in the event that we remove the edge $\varphi(y) \varphi(z) $ the number of affected copies $\psi\in\Psi_{\pi,uv}$ (i.e., those agreeing with $\varphi$ on $y,z$) is simply the number of forward extensions $F=F_{\pi, yz}$ from
$ \varphi$ to the full copy of $\sL_\pi$. Thus,
\[ \E[\Delta_{yz}\Psi_{\pi,uv}\mid\cF_i] =  -\sum_{\varphi\in\Psi_{\pi_y,uv}}\frac{Y_{\varphi(y)\varphi(z)} \Psi_{F,\varphi}}{Q}\,.\]
For a fixed $ \varphi \in \Psi_{\pi_y,uv} $, the probability of removing the image of $yz$ is $ Y_{\varphi(y) \varphi(z)}/Q $, whereas the sum over
$ \varphi \in \Psi_{ \pi_y,uv}$ of $ Y_{\varphi(y) \varphi(z)} $ is $ \Psi_{ \pi_y \circ 1,uv} $. We claim that $\pi_y\circ1\in\cB_M$.
For $y=1$ indeed $\edg1 \in \cB_M$. For $y>1$, clearly $\pi$ has either $\mathfrak{C}_\pi \in \{1,2\}$ copies of $\edg$.
Going back to the definition of $\cB_M$ we see that, as $\pi\in\cB_M$, necessarily $|\pi|\leq \ell$
where $\ell=2M$ if $ \mathfrak{C}_\pi=1$ and $\ell=M+1$ if $\mathfrak{C}_\pi=2$. Moreover, $y \neq \ell$ so as to avoid $\pi(\ell)=\edg$,
thus $y < \ell$ and it follows that $\pi_y \circ 1$ has length at most $\ell$, at most $\mathfrak{C}_\pi$ copies of $\edg$
and a last symbol of $1$, thus $\pi_y\circ 1\in\cB_M$ as claimed.

As stated before, $\sum_{\varphi } \Psi_{F,\varphi}$ gives $ \Psi_{\pi,uv} $ itself.
Altogether, an application of Lemma~\ref{lem:jpatrick} gives
\begin{equation}\label{eq-initial}
\E[\Delta_{yz}\Psi_{\pi,uv}\mid\cF_i] = - \frac{ \Psi_{\pi_y \circ 1,uv}}{ \Psi_{\pi_y,uv} Q}\Psi_{\pi,uv} + \frac{O \left( \zeta^2\right)}{ n^2p} \Psi_{\pi,uv}\,,
\end{equation}
where in the notation of that lemma we substituted $|I|=\Psi_{\pi_y,uv}$, $\delta_1 = O(\zeta)/(n^2p)$ and $\delta_2 = O(\zeta)\Sc_F$ uniformly over the injection $\varphi$ thanks to Corollary~\ref{cor:Psi-forward} (note that $yz$ is not a side boundary edge), thus $|I|\delta_1 \delta_2 = O(\zeta^2/(n^2p)) \sum_{\varphi \in\Psi_{\pi_y,uv}}\Psi_{F,\varphi} = O(\zeta^2/(n^2p))\Psi_{\pi,uv}$.

Now consider the case of an initial edge $yz$ where $ z = y+2$. If we wanted to use argument analogous to the one above we would examine the ladder $\sL_{\pi_{z-1}}$. However, in this case the sum over $\varphi\in\Psi_{\pi_{z-1},uv}$ of $Y_{\varphi(y)\varphi(z)}$ would give $\Psi_{\pi_{z-1}\circ 0,uv}$, and $\pi_{z-1}\circ 0\notin\cB_M$ since $\pi(y)=\edg$ and the substrings $\edg10$ and $\edg0$ are forbidden. A closer look at $\sL_{\pi_{z-1}}$ reveals that the vertices $y+1,z=y+2$ play identical roles in this graph --- they are connected to one another and each shares an edge with $y$. By switching their labels we can examine the graph $\sL_{\pi_y}$ where the role of $z$ will be played by the isomorphic $y+1$. This gives
\[ \E[\Delta_{yz}\Psi_{\pi,uv}\mid\cF_i] =  -\sum_{\varphi\in\Psi_{\pi_y,uv}}\frac{Y_{\varphi(y)\varphi(y+1)} \Psi_{F_{\pi,yz},\varphi'}}Q \,,\]
where $\varphi'$ above stands for the composition of $\varphi$ and the map $z\mapsto y+1$. One should note that the above equality used the fact that $F_{\pi,yz}$ has the distinguished vertex set $\{1,\ldots,y\}\cup\{z\}$ thus vertex $y+1$ that was not mapped in $\varphi'$
is indeed added to form a full copy of $\sL_\pi$.
Henceforth the analysis of this summation can be carried out exactly as in the above case of $z=y+1$, and we conclude that Eq.~\eqref{eq-initial} holds also for the case of $z=y+2$.

Having established~\eqref{eq-initial} for all initial edges, we now wish to estimate the contribution to $\E[\Delta X\mid\cF_i]$ for such edges. This will depend on $y,z$ as per the following three cases:
\begin{enumerate}
  [(1)]
  \item $y<z \leq |\pi|$: From~\eqref{eq-initial}, applied both to $\Psi_{\pi,uv}$ and to $\Psi_{\pi^-,uv}$, we have
  \begin{align}
\E\left[\Delta_{yz}\Psi_{\pi,uv}\mid\cF_i\right] - \frac{\Sc_{\pi}}{\Sc_{\pi^-}}\E\left[\Delta_{yz}\Psi_{\pi^-,uv}\mid\cF_i\right]
&= -\frac{ \Psi_{\pi_y \circ 1,uv}}{ \Psi_{\pi_y,uv} Q} X_{\pi,uv} + \frac{O \left( \zeta^2\right)}{ n^2p} \Psi_{\pi,uv} \nonumber\\
&= -\frac{ 6-o(1)}{ n^2 p} X_{\pi,uv} + \frac{O \left( \zeta^2\right)}{ n^2p} \Psi_{\pi,uv} \,,\label{eq-initial-DX-1}
\end{align}
where the last equality applied our hypothesis that $\Psi_{\sigma,uv} = (1+o(1))\Sc_{\sigma}$ for any $\sigma \in \cB_M$ (namely for $\pi_y$ and $\pi_y\circ 1$).

\item $y=|\pi|$ and $z=|\pi|+1$: Here $\pi_y=\pi$, thus the main term in the right-hand-side of~\eqref{eq-initial} simplifies into $-\Psi_{\pi\circ1,uv}/Q$. Writing $\Psi_{\pi\circ1,uv} = X_{\pi\circ1,uv} + (\Sc_{\pi\circ1}/\Sc_{\pi})\Psi_{\pi,uv}$ we get
\begin{align}
  \E\left[\Delta_{yz}\Psi_{\pi,uv}\mid\cF_i\right] + \frac{6}{n^2 p}\frac{\Sc_{\pi}}{\Sc_{\pi^-}}\Psi_{\pi^-,uv} &=
  -\frac{ X_{\pi\circ 1,uv} + np^2 \Psi_{\pi,uv}}{ Q}
  +  \frac{6}{n^2 p}\frac{\Sc_{\pi}}{\Sc_{\pi^-}}\Psi_{\pi^-,uv}
  + \frac{O \left( \zeta^2\right)}{ n^2p} \Psi_{\pi,uv} \nonumber\\
  &= -\frac{6}{n^2 p}X_{\pi,uv} -\frac{6-o(1)}{n^3 p^3}X_{\pi\circ1,uv} + \frac{O \left( \zeta^{2}\right)}{ n^2p} \Psi_{\pi,uv}\,.\label{eq-initial-DX-2}
\end{align}
(The final error term above absorbed a $1+O(\zeta^2)$ factor due to the approximation of $Q$.)

\item $y=|\pi|-1$ and $z=|\pi|+1$: Here $\pi_y=\pi^-$ and $\pi_y\circ 1 = \pi$ (we must have $\pi(y+1)=1$ to allow such an edge $yz$). The identity $\Psi_{\pi,uv}=X_{\pi,uv}+(\Sc_{\pi}/\Sc_{\pi^-})\Psi_{\pi^-,uv}$ now yields
\begin{align}
  \E\left[\Delta_{yz}\Psi_{\pi,uv}\mid\cF_i\right] + \frac{6}{n^2 p}\frac{\Sc_{\pi}}{\Sc_{\pi^-}}\Psi_{\pi^-,uv}
  &= -\frac{ X_{\pi,uv}+np^2\Psi_{\pi^-,uv}}{ \Psi_{\pi^-,uv} Q} \Psi_{\pi,uv}
  +  \frac{6}{n^2 p}\frac{\Sc_{\pi}}{\Sc_{\pi^-}}\Psi_{\pi^-,uv}
  + \frac{O \left( \zeta^2\right)}{ n^2p} \Psi_{\pi,uv} \nonumber\\
%  &= -\frac{ \Psi_{\pi ,uv}}{ \Psi_{\pi^-,uv} Q} X_{\pi,uv} -\frac{6}{n^2 p}X_{\pi,uv}   + \frac{O \left( \zeta^2\right)}{ n^2p} \Psi_{\pi,uv} \nonumber\\
  &= -\frac{12+o(1)}{n^2 p}X_{\pi,uv} +   \frac{O \left( \zeta^{2}\right)}{ n^2p} \Psi_{\pi,uv}\,,\label{eq-initial-DX-3}
\end{align}
where we applied the facts $Q=(\frac16+O(\zeta^2))n^3p^3$ and
$\Psi_{\pi ,uv}/\Psi_{\pi^-,uv} = (1+o(1))np^2$.
\end{enumerate}

\item \emph{Interior edges:} First consider an interior edge $yz$ where $y < z\leq |\pi|$.
 Here we let $\sL_{\pi_{z-1}}$ be the intermediate graph containing $yz$. As before, for each $ \varphi \in \Psi_{\pi_{z-1},uv} $ the probability of choosing a triangle that contains $ \varphi(y) \varphi(z) $ is $ Y_{ \varphi(y) \varphi(z) } /Q$, and the number
of extensions in $ \Psi_{\pi,uv} $ that are lost with this choice is the number
of forward extensions from the fixed $ \varphi $ to $ F=F_{\pi, yz} $. Therefore, as in the argument for initial edges we get
\[
\E[\Delta_{yz} \Psi_{\pi,uv}\mid\cF_i] = -\sum_{ \varphi \in \Psi_{\pi_{z-1},uv}} \frac{ Y_{\psi(y) \psi(z) } \Psi_{ F, \varphi } }{Q}
=
- \frac{ \Psi_{ \pi_{z-1}\circ b_{yz},uv}}{ \Psi_{ \pi_{z-1},uv} Q}\Psi_{\pi,uv} + \frac{O(\zeta^2)}{n^2 p}\Psi_{\pi,uv}\,,
\]
where $ b_{yz} = 0 $ if $ y < z-1 $ whereas $ b_{yz} = 1 $ if $ y =z-1 $. Observe that $\pi_{z-1}\circ b_{yz} \in \cB_M$ by the characterization of $yz$ as an interior edge, hence we have control over $\Psi_{\pi_{z-1}\circ b_{yz},uv}$ and so
\begin{align}
\E\left[\Delta_{yz}\Psi_{\pi,uv}\mid\cF_i\right] - \frac{\Sc_{\pi}}{\Sc_{\pi^-}}\E\left[\Delta_{yz}\Psi_{\pi^-,uv}\mid\cF_i\right]
&= -\frac{\Psi_{ \pi_{z-1}\circ b_{yz},uv}}{ \Psi_{ \pi_{z-1},uv} Q} X_{\pi,uv} + \frac{O \left( \zeta^2\right)}{ n^2p} \Psi_{\pi,uv} \nonumber\\
&= -\frac{ 6-o(1)}{ n^2 p} X_{\pi,uv} + \frac{O \left( \zeta^2\right)}{ n^2p} \Psi_{\pi,uv} \,.\label{eq-interior-1}
\end{align}

Now consider an interior edge $yz$ with $y < z = |\pi|+1 $. Again let $b_{yz} =0$ if $ y < z-1$ and
$ b_{yz}= 1 $ if $ y = z-1$. Since here $\pi_{z-1}=\pi$ there is no need to enumerate over extensions from an intermediate graph to $\sL_\pi$ and we simply have, due to our $1+O(\zeta^2)$ control over $Q$,
\begin{align*}
\E[\Delta_{yz} \Psi_{\pi,uv}\mid\cF_i] &= -\frac{ \Psi_{\pi\circ b_{yz},uv}}{Q} = -\frac{ X_{\pi\circ b_{yz},uv} + np^2\Psi_{\pi,uv}}{Q} = -(6+O(\zeta^2))\frac{X_{\pi\circ b_{yz},uv}}{n^3 p^3} - \frac{6+O(\zeta^2)}{n^2 p}\Psi_{\pi,uv}\,.
 \end{align*}
In particular,
\begin{align}
  \E\left[\Delta_{yz}\Psi_{\pi,uv}\mid\cF_i\right] + \frac{6}{n^2 p}\frac{\Sc_{\pi}}{\Sc_{\pi^-}}\Psi_{\pi^-,uv} &=   \E\left[\Delta_{yz}\Psi_{\pi,uv}\mid\cF_i\right]
  +  \frac{6}{n^2 p}(\Psi_{\pi,uv}-X_{\pi,uv}) \nonumber\\
  &= -\frac{6}{n^2 p}X_{\pi,uv} -\frac{6-o(1)}{n^3 p^3}X_{\pi\circ b_{yz},uv} +   \frac{O \left( \zeta^{2}\right)}{ n^2p} \Psi_{\pi,uv}\,.\label{eq-interior-2}
\end{align}
Observe that this last equation provides a large self-correcting
term, and yet we see that errors in $ X_{ \pi \circ 0,uv} $ and $ X_{ \pi \circ 1,uv} $ will influence the
error in $ X_{ \pi,uv} $.

\item \emph{Side boundary edges with a standard fan:}
Let $ yz$ be a side boundary edge such that $y<z$ and $\pi(y-2)\neq\edg$ (the case $\pi(y-2)=\edg$, where the fan
captured by the corresponding side boundary edge has an exceptional structure, will be treated separately).

Recall that in this case there are
$\mathfrak{C}_\pi\in\{0,1\}$
copies of $\edg$ in $\pi$ and $|\pi|\leq 3M-1-(M-1)\mathfrak{C}_\pi$.
Moreover, $y>0$ and $z = y+M$ following Eq.~\eqref{eq-side-boundary-yz-relation},
and $z \leq 3M-1-(M-1)\mathfrak{C}_\pi$ as otherwise $yz$ would be classified as an outer boundary edge rather than a side boundary edge.
From these facts we infer that $\pi_{y-1}\circ \edg 1 \in \cB_M$ since this string contains
$\mathfrak{C}_\pi+1$ copies of $\edg$, its last symbol is 1 (as opposed to $\edg$) and it has length $y+1 = z-M +1 \leq 3M-1-(M-1)(\mathfrak{C}_\pi+1)$.

Similar to one of the arguments used for initial edges, we will let $y+1$ in the graph
$\sL_{\pi_{y-1}\circ \edg}$ play the role of $z$. To be precise, for each $ \varphi \in \Psi_{\pi_{y-1}\circ\edg,uv} $ let
 $ \varphi'$ be the composition of $\varphi$ and the map $z\mapsto y+1$.
 By the definition of the forward extension $F = F_{\pi, yz} $ as having $I_F=\{1,\ldots,y\}\cup\{z\}$, the variable $\Psi_{F,\varphi'}$ exactly captures the number of ways to extend $\varphi'$ to a full copy of $\sL_{\pi}$, thus
\[
\E[\Delta_{yz}\Psi_{\pi,uv}\mid\cF_i] = - \sum_{ \varphi \in \Psi_{\pi_{y-1}\circ\edg ,uv}} \frac{ Y_{ \varphi(y) \varphi(y+1)} \Psi_{ F, \varphi'}}{Q}\,.\]
Since $\sum_{\varphi} Y_{ \varphi(y) \varphi(y+1)}=\Psi_{\pi_{y-1}\circ\edg1}$ while $\Psi_{F,\varphi'}=(1+O(\zeta^{\delta}))\Sc_F$ for all $\varphi'$ by Corollary~\ref{cor:Psi-forward}, we can conclude that
\begin{equation}\label{eq-side-boundary}
\E[\Delta_{yz}\Psi_{\pi,uv}\mid\cF_i] = - \frac{ \Psi_{ \pi_{y-1} \circ \edg 1, uv} }{ \Psi_{\pi_{y-1}\circ \edg,uv} Q}\Psi_{\pi,uv}
+ \frac{O (\zeta^{1+\delta})}{ n^2p}\Psi_{\pi,uv}\,.
\end{equation}

We now turn to the effect this has on the one-step change in $\Delta X$. When $y < z \leq |\pi|$ we have
\begin{align}
\E\left[\Delta_{yz}\Psi_{\pi,uv}\mid\cF_i\right] - \frac{\Sc_{\pi}}{\Sc_{\pi^-}}\E\left[\Delta_{yz}\Psi_{\pi^-,uv}\mid\cF_i\right]
&= - \frac{ \Psi_{ \pi_{y-1} \circ \edg 1, uv} }{ \Psi_{\pi_{y-1}\circ \edg,uv} Q}X_{\pi,uv}
+ \frac{O (\zeta^{1+\delta})}{ n^2p}\Psi_{\pi,uv} \nonumber\\
&= -\frac{ 6-o(1)}{ n^2 p} X_{\pi,uv} + \frac{O \left( \zeta^{1+\delta}\right)}{ n^2p} \Psi_{\pi,uv} \,.\label{eq-side-1}
\end{align}
Alternatively, if $y < z = |\pi|+1$, %(i.e., $yz$ is a corner boundary edge),
since the main term of the right-hand-side of~\eqref{eq-side-boundary} equals
\[ %\frac{\Psi_{ \pi_{y-1} \circ \edg 1, uv}}{\Psi_{ \pi_{y-1} \circ \edg, uv}Q}\Psi_{\pi,uv} =
\frac{X_{\pi_{y-1}\circ\edg1,uv} + np^2\Psi_{\pi_{y-1}\circ\edg,uv}}{\Psi_{ \pi_{y-1} \circ \edg, uv}Q}\Psi_{\pi,uv} =
\frac{6+o(1)}{n^3p^3}\frac{\Psi_{\pi,uv}}{\Psi_{ \pi_{y-1} \circ \edg, uv}} X_{\pi_{y-1}\circ\edg1,uv} + \frac{6+O(\zeta^2)}{n^2p}\Psi_{\pi,uv}
\]
(the error terms above are due solely to the approximation of $Q$) it follows that
\begin{align}
  \E\left[\Delta_{yz}\Psi_{\pi,uv}\mid\cF_i\right] + \frac{6}{n^2 p}\frac{\Sc_{\pi}}{\Sc_{\pi^-}}\Psi_{\pi^-,uv} = -\frac{6}{n^2 p}X_{\pi,uv} - \frac{6+o(1)}{n^3p^3}\frac{\Psi_{\pi,uv}}{\Psi_{ \pi_{y-1} \circ \edg, uv}} X_{\pi_{y-1}\circ\edg1,uv} +\frac{O \left( \zeta^{1+\delta}\right)}{ n^2p} \Psi_{\pi,uv}\,.\label{eq-side-2}
\end{align}

\item \emph{Side boundary edges with an exceptional fan:}
Let $ yz$ where $ y < z $ be a side boundary edge such that $\pi(y-2)=\edg$.
To tackle the expected change in this scenario we will modify the argument from the previous case to incorporate estimates on
homomorphisms from $\sL_{\pi_{y-2}\circ \edg}$ and $\sL_{\pi_{y-2}\circ \edg1}$.

Recall that now there is precisely 1 occurrence of $\edg$ in $\pi$ (at index $y-2$) and so $|\pi| \leq 2M$. We further have $y \geq 3$
and $z=y+M-1$ following~\eqref{eq-side-boundary-yz-relation}, and $z \leq 2M$ so as not to make $yz$ an outer boundary edge. Thus
$y= z-(M-1) \leq M+1$ and we see that $\pi_{y-2}\circ \edg1\in\cB_M$ as it has 2 copies of $\edg$ and ends with a 1 at index $y\leq M+1$
(of course $\pi_{y-2}\circ\edg\in\cB_M$ as well).

For each $\varphi\in \Psi_{\pi_{y-2}\circ\edg}$ let $\varphi'$ be the composition of $\varphi$ and the map the
sends $y\mapsto y-1$ and $z\mapsto y$. By definition, the forward extension
$F_{\pi,yz}$ has $I_F = \{0,1,\ldots,y-2\}\cup\{y,z\}$ and it is easy to verify that as in the previous case (where $\pi(y-2)\neq\edg$) we have
\[ \E[\Delta_{yz}\Psi_{\pi,uv}\mid\cF_i] =
- \sum_{ \varphi \in \Psi_{\pi_{y-2}\circ\edg ,uv}} \frac{ Y_{ \varphi(y-1) \varphi(y)} \Psi_{ F, \varphi'}}{Q}\,.\]
The rest of the argument proceeds exactly as in the case $\pi(y-2)\neq\edg$ with the single exception that $\pi_{y-2}$
replaces all occurrences of $\pi_{y-1}$. Overall we obtain that~\eqref{eq-side-1} is valid whenever $y<z\leq |\pi|$,
whereas in the situation of $y < z = |\pi|+1$ we have
\begin{align}
  \E\left[\Delta_{yz}\Psi_{\pi,uv}\mid\cF_i\right] + \frac{6}{n^2 p}\frac{\Sc_{\pi}}{\Sc_{\pi^-}}\Psi_{\pi^-,uv} = -\frac{6}{n^2 p}X_{\pi,uv} - \frac{6+o(1)}{n^3p^3}\frac{\Psi_{\pi,uv}}{\Psi_{ \pi_{y-2} \circ \edg, uv}} X_{\pi_{y-2}\circ\edg1,uv} +\frac{O \left( \zeta^{1+\delta}\right)}{ n^2p} \Psi_{\pi,uv}\,.\label{eq-side-2-excep}
\end{align}

\item \emph{Outer boundary edges:} Let $yz$ with $ z = |\pi|+1$ be an outer boundary edge.
By definition, extending $\sL_\pi$ by adding a new vertex incident to $y,z$ leads us out of $\cB_M$, foiling the argument we used for the other edge classes. In lieu of counting mappings of subgraphs that include $yz$ and then seeking estimates on their forward extensions to $\sL_\pi$, we will first directly identify the images of $y,z$ and then estimate the number of backward extensions to $\sL_\pi$. Formally, let $B=B_{\pi,yz}$ be the backward extension from $yz$ to $\sL_\pi$ and for a
given ordered pair $(a,b)$ with $ab \in E(G)$ and $\{a,b\}\cap\{u,v\}=\emptyset$ let $ \varphi^{ab}$ be the injection that maps
$0\mapsto u$, $1\mapsto v$, $y \mapsto a$ and $z\mapsto b$. Then
\[ \E[\Delta_{yz}\Psi_{\pi,uv}\mid\cF_i]=-\sum_{ \substack{(a,b) \,:\, \{a,b\}\cap\{u,v\}=\emptyset \\ ab \in E(G)}}
\frac{ Y_{ab} \Psi_{B, \varphi^{ab}}}{Q} \,. \]
Summing $Y_{ab}$ over the $2(|E(G)|-Y_u)$ ordered pairs $(a,b)$ as above gives $6Q-4\Psi_{\edg1,uv}$, as triangles containing $u$
are only featured in two possible orderings in this sum. The corresponding summation over $\Psi_{B,\varphi}$ as usual gives
$\Psi_{\pi,uv}$, and now thanks to Corollary~\ref{cor-backward-ext-pM} we infer that
\[
\E[\Delta_{yz}\Psi_{\pi,uv}\mid\cF_i]= - \frac{ (3Q-2\Psi_{\edg1,uv}) \Psi_{ \pi,uv}}{(|E(G)| - Y_u) Q} + \frac{O \left( \zeta^{1+\delta}\right)}{ n^2p}\Psi_{\pi,uv} = - \frac{ 6}{n^2 p}\Psi_{ \pi,uv} + \frac{O \left( \zeta^{1+\delta}\right)}{ n^2p}\Psi_{\pi,uv}\,,
\]
where the fact that $\Psi_{\edg1,uv}/Q$ and $Y_u / |E(G)|$ are both $O(1/n)$ allowed us to omit the terms $\Psi_{\edg1,uv}$ and $Y_u$ at an additive cost of $O\big(\frac1n\big)\frac{\Psi_{\pi,uv}}{n^2 p} = o(\zeta^2) \frac{\Psi_{\pi,uv}}{n^2 p}$, and similarly, we replaced the quantity $|E(G)| = \frac12(n^2 p -n)$ simply by $\frac12 n^2 p$ at an additive cost of $O\big(\frac{1}{np}\big)\frac{\Psi_{\pi,uv}}{n^2 p} = o(\zeta^2) \frac{\Psi_{\pi,uv}}{n^2 p}$.

It now follows that for any outer boundary edge $yz$,
\begin{align}
  \E\left[\Delta_{yz}\Psi_{\pi,uv}\mid\cF_i\right] + \frac{6}{n^2 p}\frac{\Sc_{\pi}}{\Sc_{\pi^-}}\Psi_{\pi^-,uv}
 &= -\frac{6}{n^2 p}X_{\pi,uv} + \frac{O \left( \zeta^{1+\delta}\right)}{ n^2p} \Psi_{\pi,uv}\,.\label{eq-outer-boundary}
\end{align}

\end{itemize}

To conclude the proof, observe that every edge type contributes the term $-(6+o(1))\frac1{n^2p} X_{\pi,uv}$ to the summation in~\eqref{eq-E-DX}, and correction terms accumulate  when $z=|\pi|+1$.
Outer boundary edges do not contribute any additional terms.
When $y=z-1$, Eqs.~\eqref{eq-initial-DX-2},\eqref{eq-interior-2} reveal that both initial edges and interior edges contribute an extra term of $-(6+o(1))\frac1{n^3p^3}X_{\pi\circ 1,uv}$, whereas side boundary edges cannot have $y=z-1$ for any $M\geq 3$.
When $y<z-1$, initial edges contribute the extra term $-(6+o(1))\frac1{np^2}X_{\pi,uv}$ as per~\eqref{eq-initial-DX-3}, interior edges contribute the extra term $-(6+o(1))\frac1{n^3p^3}X_{\pi\circ 1,uv}$ as per~\eqref{eq-interior-2} and side boundary edges contribute the term featured in~\eqref{eq-side-2}. These correction terms match the definition of $\Xi_0,\Xi_1$ in the statement of the theorem, thus completing the proof.
\end{proof}

\subsection{Concentration}\label{sec:ladders-concentrate}

In this subsection we prove the following theorem:
\begin{theorem}\label{thm:ladders-concentrate}
Let $M\geq 3$ and let $0<\delta<1$ be the constant from Corollaries~\ref{cor-backward-ext-pM} and~\ref{cor:Psi-forward}.
For any $\pi $
%\neq \emph{\edg}1$
in $\cB_M$
%of length $|\pi|>1$,
let $\pi^-$ denote its $(|\pi|-1)$-prefix and define
\begin{align*}
X_{\pi,uv} &= \Psi_{ \pi,uv} - \frac{\Sc_{\pi}}{\Sc_{\pi^-}}\Psi_{ \pi^-,uv} \,,
%\qquad\mbox{for all $\pi \neq \emph{\edg}1$}\,,
\end{align*}
where $ \Psi_{\emptyset,uv} := 1 $.
Let $\zeta  = n^{-1/2} p^{-1} \log n$ and for some absolute constant $\kappa>0$ let
\[\tau_{\textsc{q}}^* = \tau_{\textsc{q}}^*(\kappa) = \min\left\{ t : \left| Q/(\tfrac16 n^3 p^3)-1\right| \geq \kappa\, \zeta^2 \right\}\,.\]
Then w.h.p., for every $\pi \in \cB_M$, every $u,v\in V_G$ and all $t$ such that $t \leq \tau_{\textsc{q}}^*$ and $p(t)\geq p_M$
\begin{align*}
|X_{\pi,uv}| &\leq \omega_\pi \zeta  \Sc_\pi\,,
\end{align*}
where $\omega_\pi = 3^{3M-|\pi| - (M-1)\mathfrak{C}_\pi}$ with $\mathfrak{C}_\pi$ denoting the number of copies of $ \emph{\edg} $ in $\pi$.
\end{theorem}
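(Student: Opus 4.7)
The plan is to run a system of martingale arguments with critical intervals, simultaneously for all pairs $(\pi,uv)$ with $\pi\in\cB_M$ and $u,v\in V_G$, using Theorem~\ref{thm:ladders-expected} as the engine for the one-step expected change. The key structural observation is that the correction terms $\Xi_0(\pi),\Xi_1(\pi)$ only reference variables $X_{\pi',uv}$ with $|\pi'|=|\pi|+1$, so the induced constraint system on the barriers $\omega_\pi$ is acyclic (longer ladders constrain shorter ones), and the specific choice $\omega_\pi=3^{3M-|\pi|-(M-1)\mathfrak{C}_\pi}$ is designed precisely so that each destabilizing cross-term is consumed by at most one-third of the available self-correcting drift.

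Fix $\pi\in\cB_M$ and $u,v\in V_G$. Following the template of the proof of Theorem~\ref{thm:Q}, set
\[
I_\pi = \left[\bigl(1-\tfrac{\eta}{e_\pi}\bigr)\omega_\pi\zeta\Sc_\pi,\ \omega_\pi\zeta\Sc_\pi\right],\qquad Z = |X_{\pi,uv}|-\omega_\pi\zeta\Sc_\pi,
\]
for a small constant $\eta>0$. Since $\zeta\Sc_\pi\propto p^{e_\pi-1}$, a direct computation gives deterministically $\Delta(\omega_\pi\zeta\Sc_\pi) = -(6+o(1))(e_\pi-1)/(n^2p)\cdot\omega_\pi\zeta\Sc_\pi$. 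Combining this with Theorem~\ref{thm:ladders-expected}, on the event $|X_{\pi,uv}|\in I_\pi$ and under the inductive hypothesis $|X_{\pi',uv}|\le\omega_{\pi'}\zeta\Sc_{\pi'}$ for every $\pi'\in\cB_M$, yields
\[
\E[\Delta Z\mid\cF_i] \le -\frac{6-o(1)}{n^2p}\omega_\pi\zeta\Sc_\pi + |\Xi_0(\pi)|+|\Xi_1(\pi)| + \frac{O(\zeta^{1+\delta})}{n^2p}\Sc_\pi.
\]

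A case analysis of the forms of $\Xi_0,\Xi_1$ in Theorem~\ref{thm:ladders-expected} verifies that each destabilizing cross-term has magnitude at most $(2/(n^2p))\omega_\pi\zeta\Sc_\pi$. Indeed, for the interior/initial-edge case of $\Xi_1$ the ratio $\Sc_{\pi\circ 1}/\Sc_\pi=np^2$ combines with the coefficient $6/(n^3p^3)$ to produce $(6/(n^2p))\omega_{\pi\circ 1}\zeta\Sc_\pi$, and the definition of $\omega$ gives $\omega_{\pi\circ 1}/\omega_\pi=1/3$. The interior-edge case of $\Xi_0$ is analogous. The two side-boundary cases follow by a parallel computation using the ratios $\Sc_\pi/\Sc_{\pi_{y-1}\circ\edg}=n^M p^{2M-1}$ (respectively with $y-1$ replaced by $y-2$) and $\Sc_{\pi_{y-1}\circ\edg1}/\Sc_{\pi_{y-1}\circ\edg}=np^2$, which again yield a factor of $1/3$ from $\omega_{\pi_{y-1}\circ\edg1}/\omega_\pi=1/3$. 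The initial-edge contribution to $\Xi_0$ is itself self-correcting, and the $O(\zeta^{1+\delta}/(n^2p))\Sc_\pi$ term is negligible since $\omega_\pi\ge 3$. At most two destabilizing terms contribute (one from $\Xi_0$ and one from $\Xi_1$, corresponding to the two edges at vertex $|\pi|+1$), giving a total error of at most $(4/(n^2p))\omega_\pi\zeta\Sc_\pi$ and a net drift of at most $-(2-o(1))/(n^2p)\cdot\omega_\pi\zeta\Sc_\pi<0$. Hence $Z(t\wedge\tau_\Psi\wedge\tilde\tau)$ is a supermartingale, where $\tau_\Psi$ is the exit time from $I_\pi$ and $\tilde\tau$ is the first time any variable in the ensemble violates its prescribed bound.

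Concentration then follows from Freedman's inequality (Theorem~\ref{thm:concentrate}), with the $L^\infty$ bound on $|\Delta Z|$ obtained by bounding the number of copies of $\sL_\pi$ through any edge via a backward extension (Corollary~\ref{cor-backward-ext-pM}) or forward extension (Corollary~\ref{cor:Psi-forward}) up to poly-logarithmic factors, and the matching $L^2$-sum over the $O(n^2p)$ remaining steps. Comparison with the deviation $s=(\eta/(3e_\pi))\omega_\pi\zeta\Sc_\pi$ gives a tail bound $\exp(-\Omega(\log^2 n))$, easily absorbing a union bound over all $(\pi,u,v,t_0)$ with $\pi\in\cB_M$ since $|\cB_M|=e^{O(M)}$. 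The principal obstacle is the circular dependence of the argument: Theorem~\ref{thm:ladders-expected}'s hypothesis $\Psi_{\pi,uv}=(1+o(1))\Sc_\pi$ and the inductive cross-term bound $|X_{\pi',uv}|\le\omega_{\pi'}\zeta\Sc_{\pi'}$ are themselves assertions of the present theorem. This is resolved by running all supermartingales jointly against $\tilde\tau$: on the event $\tilde\tau>i$ all hypotheses are in force, and the per-variable Freedman tail $\exp(-\log^2 n)$ shows no variable can be the first to fail except with super-polynomially small probability, which a polynomial union bound amplifies into the desired high-probability statement.
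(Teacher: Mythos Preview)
Your supermartingale setup is essentially the same as the paper's: the critical interval, the variable $Z=|X_{\pi,uv}|-\omega_\pi\zeta\Sc_\pi$, the use of Theorem~\ref{thm:ladders-expected} for the drift, the verification that $\omega_{\pi\circ b}=\omega_\pi/3$ and $\omega_{\pi_{r}\circ\edg 1}=\omega_\pi/3$ (with $r=y-1$ or $y-2$) so that the two cross-terms from $\Xi_0,\Xi_1$ are each bounded by $\tfrac{2+o(1)}{n^2p}\omega_\pi\zeta\Sc_\pi$, and the joint stopping time $\tilde\tau$ that breaks the apparent circularity. All of this matches the paper (the paper takes the interval width $1/(4e_\pi)$ rather than a generic small $\eta$, yielding a net drift of $-\tfrac{1/2+o(1)}{n^2p}\omega_\pi\zeta\Sc_\pi$, but the arithmetic is the same).

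The genuine gap is in the concentration step. You assert that the $L^\infty$ and $L^2$ bounds on $\Delta Z$ follow from Corollaries~\ref{cor-backward-ext-pM} and~\ref{cor:Psi-forward}, but neither of these controls what you actually need, namely the number of $\psi\in\Psi_{\pi,uv}$ that route a given edge $yz\in E(\sL_\pi)$ through a prescribed edge of $G$. Corollary~\ref{cor-backward-ext-pM} only applies to \emph{outer} boundary edges (for other edge types the backward extension $B_{\pi,yz}$ is simply not defined), and Corollary~\ref{cor:Psi-forward} fixes the images of $\{0,1,\ldots,z\}$ (or a similar interval), not the four vertices $\{0,1,y,z\}$. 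To bound $\max_\eta\Psi_{H/T_0,\eta}$ with $T_0$ the edgeless graph on $\{0,1,y,z\}$, the paper instead appeals to the general extension machinery of Corollary~\ref{cor:general-ext}: one selects the minimum-scaling subextension $T\ni yz$, uses Part~\eqref{it-scaling-leq-1} on $T/T_0$ and Part~\eqref{it-scaling-geq-1} on $H/T$, and then carries out a case analysis according to whether $y>1$, or $y\in\{0,1\}$ with $\pi(1)=1$, or $\pi(1)=\edg$ and $y=1$. Only after this does one arrive at the uniform bounds $|\Delta_{yz}\Psi_{\pi,uv}|=O(\Sc_\pi/(np^2))$ and $\sum_{i>i_0}\E[(\Delta_{yz}\Psi_{\pi,uv})^2\mid\cF_i]=O(\Sc_\pi^2/(np^2))$, which are exactly what Freedman needs to produce $\exp(-c\,np^2\zeta^2)=\exp(-c\log^2 n)$. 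The case $y\in\{0,1\}$ is not a formality: there the minimal subextension $T$ can be as small as a single edge, and one has to invoke Theorem~\ref{thm-Psi-H-estimate} directly on the ``fan'' subextension $R$ to keep the bound at the required order. Without this analysis your Freedman input is not established and the tail bound does not follow.
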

\begin{proof}
Let $\bar{\tau}$ be the stopping time where for some $\pi \in \cB_M$ and $u,v\in V_G$
we have $| X_{\pi,uv} | >  \omega_\pi \zeta \Sc_{\pi} $.
To show that w.h.p.\ we do not encounter $\bar{\tau}$ as long as $t\leq \tau_\star$ and $p(t) \geq p_M$, consider a pair $\pi,uv$ where at some step $i_0$
the variable $|X_{\pi,uv}|$ has just entered its critical
interval
\[ I_\pi = \left[\Big(1-\frac{1}{4e_\pi} \Big)\omega_\pi \zeta  \Sc_\pi~,~\omega_\pi
\zeta \Sc_\pi\right]\,.
\]
We define
\[ Z_{\pi,uv} = |X_{\pi,uv}| - \omega_\pi \zeta  \Sc_\pi\,, \]
and will now argue that $Z_{\pi,uv}$ is a supermartingale as long as $|X_{\pi,uv}| \in I_\pi$ and in addition we have not yet encountered $\bar{\tau} \wedge \tau_\star$ and are above the final probability threshold $p_M$. Indeed, since $\Delta(\zeta \Sc_\pi) = -\frac{6+o(1)}{n^2 p} (e_\pi -1) \zeta \Sc_\pi$, in this setting Theorem~\ref{thm:ladders-expected} implies that
\begin{align}
\E[ \Delta Z_{\pi,uv}\mid\cF_i] &\leq \frac{6+o(1)}{n^2 p}\Big(\big( e_\pi -1 \big)\omega_\pi
\zeta\Sc_\pi - e_\pi|X_{\pi,uv}|\Big) + |\Xi_0| + |\Xi_1| + \frac{ O \left( \zeta^{1+\delta}\right)}{ n^2p } \Sc_\pi \nonumber\\
&\leq -\frac{6+o(1)}{n^2 p}  \tfrac{3}4 \omega_\pi  \zeta \Sc_\pi + |\Xi_0| + |\Xi_1| + \frac{ O \left( \zeta^{1+\delta}\right)}{ n^2p } \Sc_\pi \label{eq-Z-pi-diff}\,.
\end{align}
Examining the definitions of $\Xi_0,\Xi_1$ from that theorem, we see that the contribution of $\Xi_0$ when the mentioned edge $(y,|\pi|+1)$ is an initial edge translates in our setting to $-\frac{6+o(1)}{n^2 p} |X_{\pi,uv}|$, strictly negative for large enough $n$. As we look for an upper bound on $\Xi_0+\Xi_1$, it is thus legitimate to ignore this case.
However, in all other cases with these correction terms, we have no guarantee that $X_{\sigma,uv}$ for $\sigma\neq \pi$ shares the same sign as $X_{\pi,uv}$. Worst case estimates for these variables (recalling the bounds on their absolute values, available throughout $t<\bar{\tau}$) give
 \[ |\Xi_1| \leq \frac{6+o(1)}{n^3p^3} \omega_{\pi\circ1} \zeta \Sc_{\pi\circ 1} =
 \frac{6+o(1)}{n^2p} \omega_{\pi\circ1} \zeta \Sc_{\pi}\,,\]
and similarly,
 \[ \frac{|\Xi_0|}{\frac{6}{n^2 p}\zeta \Sc_\pi} \leq \left\{\begin{array}
   {ll}
(1+o(1)) \omega_{\pi_{y-1}\circ\edg 1} &\mbox{if $\sL_\pi$ has a side boundary edge $yz$ with $z=|\pi|+1$, $\pi(y-2)\neq\edg$}\\
\noalign{\medskip}
(1+o(1)) \omega_{\pi_{y-2}\circ\edg 1} &\mbox{if $\sL_\pi$ has a side boundary edge $yz$ with $z=|\pi|+1$, $\pi(y-2)=\edg$}\\
\noalign{\medskip}
(1+o(1)) \omega_{\pi\circ0} &\mbox{if $\sL_\pi$ has an interior edge $yz$ with $ y < z-1 $ and $ z = |\pi|+1$}
 \end{array}\right.
\]
%Implicit in this estimate on $|\Xi_0|$ is the following crucial observation regarding the exceptional variable %$X_{\edg1}$: Since a side boundary edge $yz$ always has $y>1$ (this reason for that goes back to the fact the %required $M-1$-fan cannot occur at $a=1$), the variable $X_{\pi_{y-1}\circ\edg1}$ featured in the definition of %$\Xi_0$ in Theorem~\ref{thm:ladders-expected} cannot be $X_{\edg1}$ (the $\sigma\in\cB_M$ it addresses has %|\sigma|\geq 3$). For all $\sigma\neq \edg1$ the definition of the variable $X_\sigma$ in this theorem coincides %with the one from Theorem~\ref{thm:ladders-expected}, thereby entitling us to substitute our bound on %$|X_{\pi_{y-1}\circ\edg1}|$ in the formula for $|\Xi_0|$.
Returning to~\eqref{eq-Z-pi-diff}, when $\sL_\pi$ does not contain a side boundary edge $yz$ with $z=|\pi|+1$,
\begin{align*}
\E[ \Delta Z_{\pi,uv}\mid\cF_i] &\leq -\frac{6+o(1)}{n^2 p} \left( \tfrac{3}4 \omega_\pi - \omega_{\pi\circ 0} - \omega_{\pi\circ1}\right)  \zeta \Sc_\pi + \frac{ O \left( \zeta^{1+\delta}\right)}{ n^2p } \Sc_\pi \\
&= -\frac{1/2+o(1)}{n^2 p} \omega_\pi \zeta \Sc_\pi + \frac{ O \left( \zeta^{1+\delta}\right)}{ n^2p } \Sc_\pi
\,,
\end{align*}
where we set $ \omega_\sigma = 0 $ if $ \sigma \not\in \cB_M $ and the equality stemmed from the fact that for any $b\in\{0,1\}$, since $\pi$ and $\pi\circ b$ share the same number of $\edg$ symbols, our definition of the constants $(\omega_\sigma)_{\sigma\in\cB_M}$ implies that $\omega_{\pi\circ b} = \frac13 \omega_{\pi}$.

Similarly, when $yz$ with $z=|\pi|+1$ is a side boundary edge of $\sL_\pi$ we can let $r=y-1$ if $\pi(y-2)\neq\edg$ and $r=y-2$ otherwise,
and obtain that
\[
\E[ \Delta Z_{\pi,uv}\mid\cF_i] \leq -\frac{6+o(1)}{n^2 p} \left( \tfrac34 \omega_\pi - \omega_{\pi\circ 1} - \omega_{\pi_{r}\circ\edg 1}\right)  \zeta \Sc_\pi + \frac{ O \left( \zeta^{1+\delta}\right)}{ n^2p } \Sc_\pi \,.
\]
 By definition of side boundary edges, $\pi$ has at most a single copy of $\edg$, thus $\omega_{\pi} = 3^{3M-|\pi| - (M-1) \mathfrak{C}_\pi}$
  and $\omega_{\pi_{r\circ\edg 1}}=3^{3M-(r+2)- (M-1)
 (\mathfrak{C}_\pi+1)}=3^{2M-1-r- (M-1) \mathfrak{C}_\pi}$.
Furthermore, revisiting~\eqref{eq-side-boundary-yz-relation} we have $y=z-M=|\pi|+1-M$ if $\pi(y-2)\neq \edg$ (and then $r=|\pi|-M$)
while $y=z-(M-1)=\|pi|-M+2$ if $\pi(y-2)= \edg$ (and then again $r=|\pi|-M$). In both cases we therefore get
that $\omega_{\pi_{r\circ\edg 1}} = \frac13 \omega_\pi = \omega_{\pi\circ1}$ and so again we obtain that
\[ \E[ \Delta Z_{\pi,uv}\mid\cF_i] \leq -\frac{1/2+o(1)}{n^2 p} \omega_\pi \zeta \Sc_\pi + \frac{ O \left( \zeta^{1+\delta}\right)}{ n^2p } \Sc_\pi \,.\]
In particular, $\E[\Delta Z_{\pi,uv}\mid\cF_i] < 0$ for large enough $n$, as claimed.

We are left with the task of  bounding the one-step changes $\Delta Z$ given $\cF_i$ in $L^\infty$ and $L^2$ norm.
Revisiting the events $\cE_{\psi}$ and $\cE_{\psi}^{yz}$ defined in~\eqref{eq-cE-psi-def}, observe that when aiming for an upper bound on $|\Delta\Psi_{\pi,uv}|$ (as opposed to tight asymptotics, where~\eqref{eq-cE-approx} incurred an approximation error) one always has
\[ \left|\Delta \Psi_{\pi,uv}\right| = \sum_{\psi\in\Psi_{\pi,uv}} \one_{\cE_{\psi}} \leq
\sum_{\psi\in\Psi_{\pi,uv}} \sum_{yz\in E(\sL_\pi)} \one_{\cE_{\psi}^{yz}} = \sum_{yz\in E(\sL_\pi)} \left|\Delta_{yz} \Psi_{\pi,uv}\right|\,.\]
In particular, for the sake of an upper bound on $|\Delta Z|$ and $\E[(\Delta Z)^2 \mid \cF_i]$ it will suffice to analyze the effect of each edge $yz\in E(\sL_\pi)$ separately.

Let $yz\in E(\sL_\pi)$ with $y<z$ and let $H=(V(\sL_\pi),E(\sL_\pi))$ be the extension graph featuring $I_H=\{0,1\}$. As argued in the proof of Theorem~\ref{thm-Psi-H-estimate}, let $T=T(yz)\subset H$ be the subextension of $H$ with minimal scaling out of all subextensions that contain the edge $yz$ (and, if multiple such subextensions exist, with minimal cardinality out of those).

Consider some subextension of the quotient $H/T$. This graph can be written as $K/T$ for some subextension $K\subset H$ that contains all edges of $T$, thus $\Sc_K \geq \Sc_T$ by the minimality of $T$, and equivalently $\Sc_{K/T} \geq 1$. Part~\eqref{it-scaling-geq-1} of Corollary~\ref{cor:general-ext} now implies that for some $c_1>0$ and any injection $\varphi : T \to V_G$ we have
\begin{equation}
  \label{eq-Psi-H/T-upper-bound}
  \Psi_{H/T,\varphi} \leq \Sc_{H/T} (\log n)^{c_1}\,.
\end{equation}
Notice that at the same time any subextension $K\subset H$ has $\Sc_K \geq 1$. This follows from the fact that, by Definition~\ref{def:ladders}, every vertex $1<i\leq |\pi|+1$ shares at most $2$ edges with the vertices $\{ j : j < i\}$, thus $e_K \leq 2(v_K-\iota_K)$ and $\Sc_{K} \geq n^{(v_K-\iota_K)/M} \geq 1$ due to the hypothesis $p\geq p_M$. Since every $K\subset T$ has $\Sc_K \geq 1$ (being also a subextension of $H$) we may again appeal to Part~\eqref{it-scaling-geq-1} of Corollary~\ref{cor:general-ext} to obtain that for some $c_2>0$ and any $u,v\in V_G$ we have
\begin{equation}
  \label{eq-Psi-T-upper-bound}
  \Psi_{T,uv} \leq \Sc_{T} (\log n)^{c_2}\,.
\end{equation}
A final ingredient we need is to control the number of extensions to a copy of $T$ given the image of the edge $yz$.
Formally, let $T_0$ be the edgeless graph on the vertex set $\{0,1,y,z\}$ and consider the quotient $T^* = T/T_0$ (that is, the extension graph obtained from $T$ by increasing the distinguished vertex set to $\{0,1,y,z\}$ and removing any edges among these vertices).
The choice of $T$ implies that every $K^* \subsetneq T^*$ has $\Sc_{K^*} > \Sc_{T*}$ (as otherwise the result of moving the vertices $y,z$ from $I_K$ to $V_K\setminus I_K$ and adding all edges of $T$ among $\{0,1,y,z\}$ would yield a graph $K$ with $\Sc_{K}\leq \Sc_{T}$ and a strictly smaller vertex set). Equivalently,
$\Sc_{T^*/K^*} \leq 1$ for all $K^*\subset T^*$ and so, by Part~\eqref{it-scaling-leq-1} of Corollary~\ref{cor:general-ext}, we deduce that for some $c_3>0$ and any injection $\eta : \{0,1,y,z\} \to V_G$,
\begin{equation}
  \label{eq-Psi-T*-upper-bound}
  \Psi_{T/T_0,\eta} \leq (\log n)^{c_3}\,.
\end{equation}
We now translate these facts to $L^\infty$ and $L^2$ bounds on $\Delta_{yz} X_{\pi,uv}$ given $\cF_i$.
The analysis will differ depending on whether or not $ y \in \{0,1\}$.
\begin{enumerate}[\!\!(i)]
\item \label{it-y>1} In case $y>1$:  Since $T$ contains the edge $yz$ we have $\Sc_T \geq n^2 p^4$ already due to the vertices $y,z$ (as argued above, each vertex $i>1$ that belongs to $V_T$ sends at most two edges backwards).

    For the $L^\infty$ bound we look at the maximum number of extensions $\psi\in\Psi_{\pi,uv}$ all of which map $yz$ to a particular edge in $E(G)$. Equivalently, we are examining the maximum possible value of $\Psi_{H/T_0,\eta}$ over all injections $\eta:V(T_0)\to V_G$ that map $0\to u$ and $1 \to v$ with the
    additional property of preserving all edges of the induced graph of $H$ on $\{0,1,y,z\}$.  Each such extension $\psi$ factors into extensions $T_0 \hookrightarrow T$ and $T \hookrightarrow H$, thus an application of~\eqref{eq-Psi-T*-upper-bound} and~\eqref{eq-Psi-H/T-upper-bound} gives
 \begin{equation}\label{eq-y>1-Linf-bound}
|\Delta_{yz} \Psi_{\pi,uv} | \leq \max_{\eta} \Psi_{H/T_0,\eta} \leq \max_{\eta} \sum_{\varphi \in \Psi_{T/T_0,\eta}} \Psi_{H/T,\varphi} \leq \Sc_{H/T}(\log n)^{c_1+c_3}\,.
 \end{equation}
 Since $\Sc_{H/T}=\Sc_H/\Sc_T$ and $\Sc_T \geq n^2 p^4$ we further have
 \begin{equation}\label{eq-y>1-Linf-bound-2}
|\Delta_{yz} \Psi_{\pi,uv} | \leq \max_{\eta} \Psi_{H/T_0,\eta} \leq \frac{(\log n)^{c_1+c_3}}{n^2 p^4} \Sc_{H}\,.
 \end{equation}

    For the $L^2$ bound, observe that the set of edges in $E(G)$ that can play the role of $yz$ for some copy of $H$ rooted at $uv$ is clearly contained in $\{ \psi(y)\psi(z) : \psi \in \Psi_{T,uv}\}$. Using~\eqref{eq-Psi-T-upper-bound}, the probability that such an edge is chosen in the next round is therefore at most
    \[ \sum_{\psi\in \Psi_{T,uv}}\frac{Y_{\psi(y)\psi(z)}}Q \leq (6+o(1))\frac{\Sc_T (\log n)^{c_2}}{n^2 p}\,.\]
    By~\eqref{eq-y>1-Linf-bound}, combined with the usual fact that $\Sc_{H/T} = \Sc_{H}/\Sc_T$, it follows that
    \[ \E [(\Delta_{yz} \Psi_{\pi,uv})^2 \mid \cF_i] \leq (6+o(1))\frac{(\Sc_H)^2 (\log n)^{c_2+2(c_1+c_3)}}{\Sc_T n^2 p} \leq
    \frac{(\Sc_H)^2 (\log n)^{C}}{n^4 p^5}
    \,,\]
    where the last inequality featured some suitably large constant $C>0$ and relied on the fact that $\Sc_T \geq n^2 p^4$.
    Integrating this over a maximum of $n^2 p$ remaining steps gives
    \begin{align}
    \label{eq-y>1-L2-bound}
    \sum_{i>i_0} \E [(\Delta_{yz} \Psi_{\pi,uv})^2 \mid \cF_i] \leq \sum_{i>i_0} \frac{(\Sc_H)^2 (\log n)^{C}}{n^4 p(i)^5} \leq \frac{(\Sc_H)^2}{n^2 p(i_0)^4} (\log n)^{C}\,.
    \end{align}

\item \label{it-pi(1)=1} In case $ y \in \{0,1\} $ and  $ \pi(1) =1 $:

\begin{compactitem}[\noindent$\bullet$]
\item Suppose $z=2$. Here $V(T)=\{0,1,2\}$ and $E(T)=\{02,12\}$ (any additional vertex would increase $\Sc_T$).
The number of the number of extensions $\psi\in\Psi_{\pi,uv}$ which map $yz$ to a prescribed edge in $E(G)$ is
exactly the number of forward extensions from $yz$ to $\pi$. Since $F=F_{\pi,yz}$ has $V(T)$ as its distinguished vertices,
$\Sc_F = \Sc_H / (np^2)$ and Corollary~\ref{cor:Psi-forward} gives
\begin{equation}
\label{eq-y=1,z=2-Linf-bound}
   \Delta_{yz}\Psi_{\pi,uv}  = O( \Sc_H/ (n p^2) )\,.
\end{equation}
We claim that the probability of choosing a triangle incident to $yz$ is $O(p/n)$. Indeed, if $y=0$ then
we are looking at triangles incident to $u$ and to some neighbor of $v$ (to assume the role of the image of $z$), and
there are $N_{u,v}=O(np^2)$ candidates for the image of $z$ each sharing $O(np^2)$ triangles with $u$. The same holds symmetrically
if $y=1$ and we deduce that
\begin{equation}
\label{eq-y=1,z=2-L2-bound}
\sum_{i>i_0} \E [( \Delta_{yz}\Psi_{\pi,uv} )^2 \mid \cF_i ] \leq \sum_{i>i_0} O\bigg( \frac{p}n\, \frac{\Sc_H^2}{n^2 p^4}\bigg)
= O\bigg( \frac{\Sc_H^2} {np(i_0)^2}\bigg)\,.
\end{equation}

\item Suppose $z\geq 3$. Here necessarily $y=1$ and either $\pi_z=110\ldots 0$ or $\pi_z = 11$ (since $\pi(2)\neq 0$).

Note that if $ V(T) = \{ 0,1,2, \ldots, z \} $ then $\Sc_T \geq n^2 p^4$
already due to two backward edges sent from $2$ and two such edges from $z$. Thus, the argument
for $y>1$ (Case~\eqref{it-y>1}) applies in this case as well and implies the bounds in~\eqref{eq-y>1-Linf-bound-2} and~\eqref{eq-y>1-L2-bound} for
the $L^\infty$ and $L^2$ norms, resp.

Otherwise, we must have
$ V(T) = \{0,1,z\} $ and $E(T)=\{ 1z\}$. (Indeed, including any $w>z$ in $V(T)$ would clearly increase $\Sc_T$.
If $V(T)\cap A \subsetneq A$ for $A= \{2,\ldots,z-1\}$ then of course $z>3$ and so $\pi_z=110\ldots 0$. Thus,
the vertex with minimal index in $V(T) \cap A$ sends 2 edges backwards and its removal from $T$ would decrease $\Sc_T$,
contradicting the minimality of $T$.)

\medskip
For the $L^\infty$ bound, we can control the number of forward extensions $F_{\pi,yz}$ from $yz$ to $\pi$ via Corollary~\ref{cor:Psi-forward}.
If $yz$ happens to be a side boundary edge (in our setting this will happen iff $z$ concludes an $(M-1)$-fan at vertex 1) then
$I_F=V(T)=\{0,1,z\}$ thus $\Sc_F = \Sc_H / (np)$ and
\[
   \Delta_{yz}\Psi_{\pi,uv}  = O( \Sc_H/ (n p) )\,.
\]
For all other edge-types $I_F=\{0,1,\ldots,z\}$ and we must examine the number of extensions from $V(T)=\{0,1,z\}$ to the induced graph
of $H$ on $I_F$. Specifically, Let $R$ be the extension graph of $\sL_{\pi_z}$ with the distinguished vertex set $\{0,1,z\}$.
Recalling that $\pi_z = 11\ldots0$, there are $z-2$ vertices and $2(z-1)-1=2z-3$ edges in $R$.
Observe that $R$ is strictly balanced, since we can view any vertex $1<j<z$ is accountable for two exclusive
directed edges going from it to $1$ and $j+1$ (the maximal density is attained when all such $j$'s are present). Finally, $\Sc_R \geq 1$ as otherwise adding the vertices to $\{2,\ldots,z-1\}$ to $V(T)$ would give an extension graph
containing $yz$ with a scaling of $\Sc_T \Sc_R < \Sc_T$, violating the definition of $T$. That is,
\[ n^{z-2} p^{2z-3} \geq 1\,.\]
As before, we count extensions $\psi\in\Psi_{\pi,uv}$ which map $yz$ to a prescribed edge $e = vw$ via the product
of $ \Psi_{R, uvw}$ and $\max_{\eta} \Psi_{F_{\pi, yz}, \eta} $.
The latter is asymptotically $\Sc_F = \Sc_H / (np \Sc_R)$ by Corollary~\ref{cor:Psi-forward}.
For the former we apply Theorem~\ref{thm-Psi-H-estimate}, yielding an upper bound of $(1+o(1))\Sc_R$
as long as $t \leq t_R^-(\sqrt{\delta})$ and the upper bound $\Sc_R + \Sc_R^{1-\sqrt{\delta}}(\log n)^\rho $
for some fixed $\rho=\rho_R>0$ for $t_R^-(\sqrt{\delta}) t \leq t_R$.
We have $\Sc_R \geq 1$, hence indeed $t \leq t_R$ and the combined upper bound of
$\Sc_R + \max\{ \Sc_R, \Sc_R^{1-\sqrt{\delta}}(\log n)^\rho\}$ is a valid one for $\Psi_{R,uvw}$.
Moreover, since $\Sc_R$ is decreasing we see that this bound is $2\Sc_R$ up to the first point where
$\Sc_R \leq (\log n)^{\rho/\sqrt{\delta}}$ (namely, $p=n^{-\frac{z-2}{2z-3}+o(1)} \leq n^{-2/3+o(1)}$ for any $z\geq 3$).
Beyond that point we can simply use the bound
$2 (\log n)^{c}$ for $c = ( 1 + \frac{1-\sqrt{\delta}}{\delta})\rho$. (
Altogether,
\begin{equation}
   \label{eq-y=1-Linf-bound}
   \Delta_{yz}\Psi_{\pi,uv}  \le (2+o(1)) \max\{  \Sc_R\;,\; \log^c n \} \frac{\Sc_H}{np\Sc_R}
   =  O\bigg( \frac{\Sc_H}{np^{5/4}} \bigg) \,,
\end{equation}
where the last inequality used the aforementioned fact that the term $\log^c n$ can dominate $\Sc_R$ only starting at
$p \leq n^{-2/3+o(1)}$, whilst it is then easily absorbed by the extra factor of $p^{-1/4}$.
The $L^\infty$ bound in~\eqref{eq-y=1-Linf-bound} is of larger order then the $O(\Sc_H/(np))$ bound given for the case where
$yz$ is a side boundary edge, thus we can safely use it for any edge-type.

\medskip
For the $L^2$ bound, note that the probability of selecting a triangle incident to $v$ (supporting the removal of $yz$) is
$\sum_{w\in N_v} N_{vw} / Q =  O(1/n) $. Summing the one-step expected variances in the usual way yields
\begin{align}\label{eq-y=1-L2-bound}
    \sum_{i>i_0} \E [( \Delta_{yz}\Psi_{\pi,uv} )^2 \mid \cF_i] \leq
    \sum_{i>i_0} O\bigg( \frac{\Sc_H^2}{n^3 p(i)^{5/2}}\bigg)
= O\bigg( \frac{\Sc_H^2} {n p(i_0)^{3/2}}\bigg)\,.
\end{align}
\end{compactitem}
\item In case $ \pi(1)= \edg$ and $ y =1 $:
By definition $yz$ is an initial edge, and since the corresponding forward extension $F_{\pi,yz}$ has $I_F=\{0,1,z\}$ we
can invoke Corollary~\ref{cor:Psi-forward} and infer that
\begin{equation}
\label{eq-y=1,edg-Linf-bound}
   \Delta_{yz}\Psi_{\pi,uv}  \leq  \max_\eta \Psi_{F_{\pi,yz},\eta} = O(\Sc_F) = O( \Sc_H/ (n p) )
\end{equation}
and (again by the fact that the probability of choosing a triangle incident to $v$ is $O(1/n)$)
\begin{equation}
\label{eq-y=1,edg-L2-bound}
\sum_{i>i_0} \E [( \Delta_{yz}\Psi_{\pi,uv} )^2 \mid \cF_i] \leq
    \sum_{i>i_0} O\bigg( \frac{\Sc_H^2}{n^3 p(i)^{2}}\bigg) \leq
O\bigg( \frac{\Sc_H^2}{np(i_0)}\bigg)\,.
\end{equation}
\end{enumerate}

We now combine the bounds above for analogous estimates on $\Delta Z$.
Comparing~\eqref{eq-y>1-Linf-bound-2}, \eqref{eq-y=1,z=2-Linf-bound}, \eqref{eq-y=1-Linf-bound} and~\eqref{eq-y=1,edg-Linf-bound}
we see that for all $\pi$ and all edges $yz$ in $ \sL_\pi$ we have
\begin{equation}
\label{eq-Linf-maxbound}
   \Delta_{yz}\Psi_{\pi,uv}  = O\bigg( \frac{\Sc_H}{n p^2} \bigg)
\end{equation}
whereas a comparison of~\eqref{eq-y>1-L2-bound}, \eqref{eq-y=1,z=2-L2-bound}, \eqref{eq-y=1-L2-bound} and~\eqref{eq-y=1,edg-L2-bound}
shows that similarly we always have
\begin{equation}
\label{eq-L2-maxbound}
\sum_{i>i_0} \E [( \Delta_{yz}\Psi_{\pi,uv} )^2 \mid \cF_i] \leq O\bigg( \frac{\Sc_H^2}{np^2}\bigg)\,.
\end{equation}
By summing these expressions over all $yz\in E(H)$ we arrive at the corresponding upper bounds on $\Delta \Psi_{\pi,uv}$ (of the same order). Furthermore, if $z\leq |\pi|$ we may repeat this analysis for $\sL_{\pi^-}$, and since the obtained bounds will be in terms of $\Sc_{\pi^-}$ while the variable $X_{\pi,uv}$ features $(\Sc_{\pi}/\Sc_{\pi^-}) \Psi_{\pi^-,uv}$ we deduce that these bounds extend  the aforementioned bounds to $X_{\pi,uv}$ itself. It is furthermore easy to verify that the change in $Z$ due to the term $\zeta \Sc_\pi$, changing deterministic with $p$ by an additive term of relative order $O(1/(n^2 p))$, is negligible in comparison with these bounds.
(Indeed, it is a factor of $n /(p\zeta)$ smaller than our Lipschitz bound.) Altogether, the $L^\infty$ and cumulative $L^2$ bounds hold for the supermartingale $Z$ and we can now exploit these to obtain a large deviation estimate.

Observe that the $L^\infty$ bound in~\eqref{eq-Linf-maxbound} is $o(\zeta \Sc_H)$
since $n^{1/2} p$ tends to infinity (it is at least as large as $n^{1/M}$). Thus, the first step of $Z$ upon which $|X_{\pi,uv}|$ enters the critical interval $I_\pi$ is negligible in terms of the order of this window and we have $Z(i_0) = (-1/(4e_\pi)+o(1))\omega_\pi \zeta \Sc_\pi$.

Set $s = (5e_\pi)^{-1} \omega_\pi \zeta \Sc_\pi$ as the target deviation for our supermartingale $Z$.
The product of the uniform $L^\infty$ bound and $s$ has order $\zeta (\Sc_\pi)^2 / (np^2)$.
This is of lower order compared to the cumulative $L^2$-bound in~\eqref{eq-L2-maxbound}  since $\zeta \to 0$ given the fact that $n p^2 \geq n^{1/M}$ throughout the range $p\geq p_M$.

We may now invoke Freedman's inequality and get that for some fixed $c>0$,
\begin{align*} \P\Big(\exists t>t_0: Z(t\wedge \tau_\star \wedge\bar{\tau}) \geq Z(t_0) + s\Big) &\leq
\exp\left(- \frac{ c (\zeta \Sc_\pi)^2}{ \Sc_\pi^2 / (np^2)} \right) =
e^{-c np^2 \zeta^2 } = e^{- c (\log n)^2} \,.
\end{align*}
Having arrived at an error probability that decays to $0$ at a super-polynomially rate, a union bound over the ensemble of $(\Psi_{\pi,uv})$ variables establishes the required estimates for all $\pi \in \cB_M$.
\end{proof}

%%%%%%%%%%%%%%%%%%%%%%%%%%%%%%%%%%%%%%%%%%%%%%%%%%%%%%%%%%%%%%
%%%
%%% Lower bound
%%%
%%%%%%%%%%%%%%%%%%%%%%%%%%%%%%%%%%%%%%%%%%%%%%%%%%%%%%%%%%%%%%

\section{Lower bound on the final number of edges}\label{sec:lower}
Our lower bound of $n^{3/2-o(1)}$ on the final number of edges will rely solely on the fact that for any fixed $\epsilon>0$ we have asymptotically tight estimates for all the co-degrees in the process at time $t$ such that $p(t)=n^{-1/2+\epsilon}$.
\begin{theorem}\label{thm-lower-bound}
Suppose that for some fixed $0<\epsilon<\frac16$, all co-degrees satisfy $Y_{u,v} = (1+o(1)) np^2$ throughout $p\geq p_0=n^{-1/2+\epsilon}$.
Then w.h.p.\ the final number of edges is at least $n^{3/2-6\epsilon-o(1)}$.
\end{theorem}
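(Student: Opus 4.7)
The plan is to work at the threshold $t^*$ defined by $p(t^*)=p_0=n^{-1/2+\epsilon}$. The hypothesis $Y_{u,v}=(1+o(1))np^2$ comfortably meets the input of Theorem~\ref{thm:Q}, so that $Q(t^*) = (1+o(1))\tfrac{1}{6}n^{3/2+3\epsilon}$ and $|E(t^*)| = (1+o(1))\tfrac{1}{2}n^{3/2+\epsilon}$ hold w.h.p. If the process happens to terminate before $t^*$, then $|E(\tau_0)| \geq \tfrac{1}{2}n^{3/2+\epsilon}$, which already exceeds $n^{3/2-6\epsilon}$ since $\epsilon<\tfrac{1}{6}$. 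So I may assume the process continues past $t^*$.

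In $G(t^*)$ every edge lies in $(1+o(1))n^{2\epsilon}$ triangles, and a standard greedy argument on the triangle-hypergraph extracts an edge-disjoint triangle packing $\Pi\subset G(t^*)$ of size $|\Pi|=\Omega\bigl(Q(t^*)/\max_{u,v}Y_{u,v}(t^*)\bigr)=\Omega(n^{3/2+\epsilon})$. To produce near-independence among the fates of the triangles, I would then subsample: retain each $\tau\in\Pi$ independently with probability $q=n^{-O(\epsilon)}$, obtaining a thinner family $\Pi'$ with $|\Pi'|=n^{3/2-O(\epsilon)}$ and with high probability no two triangles of $\Pi'$ sharing a vertex. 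For each $\tau=uvw\in\Pi'$ let $A_\tau$ be the event that at least one of $uv,uw,vw$ survives to $G(\tau_0)$.

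The heart of the argument, and its main obstacle, is to show two things: (i) $\Pr(A_\tau)\geq n^{-O(\epsilon)}$ for every $\tau\in\Pi'$, and (ii) the events $\{A_\tau\}_{\tau\in\Pi'}$ are (approximately) mutually independent. For (i) one continues the martingale-based analysis of the upper bound slightly beyond $t^*$, coupling the evolution of the three edges of $\tau$ to a Bernoulli heuristic governed by $Q$ and the co-degrees; the likelihood that none of the $\approx n^{2\epsilon}$ incident triangles fires before destroying $\tau$ on its own decays only polynomially in $n$. For (ii) one uses that by (vertex-)disjointness of $\Pi'$, the histories of the nine edges spanning any two triangles of $\Pi'$ interact only through rare coincidences; these can be bounded via a union bound or decoupled cleanly through the random-permutation presentation of the process (assigning i.i.d.\ uniforms $X_\tau$ to triangles of $K_n$ and processing in increasing order), which localizes each event $A_\tau$ to a near-disjoint collection of coordinates.

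Once both estimates are in place, a Chernoff bound on $\sum_{\tau\in\Pi'}\mathbf{1}_{A_\tau}$ gives $|\{\tau\in\Pi':A_\tau\}|\geq n^{3/2-O(\epsilon)}$ w.h.p. Since the triangles of $\Pi'$ are edge-disjoint, each successful $A_\tau$ contributes a distinct surviving edge, yielding $|E(\tau_0)| \geq n^{3/2-6\epsilon-o(1)}$ once the subsampling rate $q$ and the per-triangle probability are optimized so that their product is at least $n^{-7\epsilon}$. The independence step in (ii) is the place where I expect the subtleties to lie, since even edge/vertex-disjoint triangles at $t^*$ can influence each other through downstream triangles born of their respective neighborhoods; pinning this down rigorously will likely require a careful coupling argument tied to the permutation view above.
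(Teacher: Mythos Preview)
Your high-level strategy (pick edge-disjoint triangles, use the permutation view, show each contributes a surviving edge with probability $n^{-O(\epsilon)}$, get independence, then Chernoff) is the right shape, but two of the steps you flag as ``to be filled in'' are exactly where the real content lies, and the approach you sketch for them does not work as stated.

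\textbf{Step (i) is a genuine gap.} At time $t^*$ every edge of a triangle $\tau=uvw$ still sits in $(1+o(1))n^{2\epsilon}$ triangles. There is no simple permutation event at this stage that guarantees an edge of $\tau$ survives: even if some neighbour $xyz\sim\tau$ is processed first, the edge $uv$ may still be killed later by one of the many other triangles through it. Your ``Bernoulli heuristic'' would have to track the joint evolution of all $\sim n^{2\epsilon}$ triangles through each edge of $\tau$ well past the point $p_0$ where the co-degree control is available, and you give no mechanism for this. The paper avoids this entirely: it does \emph{not} fix the triangles at $t^*$. Instead it pushes the analysis forward to a time $i_1$ with $p(i_1)=n^{-1/2}/\log n$ (using a separate upper bound on $Q$, valid past $p_0$, to show $Q(i_1)\le(\tfrac13+o(1))|E(i_1)|$ and hence that almost all triangles are edge-disjoint there). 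It then builds a set $\sX$ \emph{dynamically}: a triangle $uvw$ is added to $\sX$ at the first round $j$ where one of its edges, say $uv$, drops to co-degree $1$. At that instant some other triangle $xyz\sim uvw$ still exists, and the event $\cE_{uvw}=\{\sigma\text{ places }xyz\text{ before all of }\cN_1(xyz)\}$ has probability $\ge n^{-2\epsilon-o(1)}$ and \emph{deterministically} forces $uv$ to survive (since $uv$ is then only in $uvw$, which gets broken by the removal of $xyz$). Waiting for the co-degree-$1$ moment is the device that turns (i) into a clean permutation statement.

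\textbf{Step (ii) is also a gap, and vertex-disjointness is not the cure.} Even vertex-disjoint triangles of $\Pi'$ have survival events that depend on overlapping collections of permutation coordinates (their neighbour sets at time $t^*$, and the neighbours of those neighbours, intersect freely). The paper's fix is structural rather than probabilistic: when $uvw$ is added to $\sX$ it simultaneously adds all of $\cN_2(uvw)$ to an exclusion set $\sY$, barring any of those triangles from ever entering $\sX$. This guarantees that the events $\cE_{uvw}$ for distinct $uvw\in\sX$ are determined by \emph{disjoint} sets of $\sigma$-coordinates, so $\sum\one_{\cE_{uvw}}$ genuinely dominates a binomial. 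The cost of the exclusion is a factor $|\cN_2|\le n^{4\epsilon+o(1)}$ in the size of $\sX$, which is where the $6\epsilon$ in the final exponent comes from ($4\epsilon$ from exclusion plus $2\epsilon$ from the per-triangle probability). Your subsampling to vertex-disjointness does not achieve this localisation.
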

\begin{remark}
  \label{rem-lower-bound}
The above result was formulated in accordance with our upper bound analysis, which achieves the required estimate over the co-degrees as long
as the number of edges is of order $n^{3/2+\epsilon}$ for an arbitrarily small fixed $\epsilon>0$.
The proof argument of Theorem~\ref{thm-lower-bound} in fact yields a stronger result, namely that an asymptotic upper bound of $np^2$ over all co-degrees
throughout the point of $n^{3/2} h$ edges, for some $h(n) < n^{1/6}$ that grows to $\infty$ with $n$, implies that the final number of edges is w.h.p.\ at least
$c n^{3/2} h^{-6}$ for some absolute $c>0$.
\end{remark}

An important ingredient in the proof is the following straightforward lemma which provides an upper bound on $Q$ valid well beyond the point where our co-degree estimates break down. An estimate of this nature, albeit weaker, was provided in~\cite{BFL}. Towards the end of the process, what begins as a second order term in this bound will become the main term, hence the importance of estimating it sharply.
\begin{lemma}\label{lem-Q-upper-bound}
Let $\tau_{c}$ denote the minimal time where $Y_{u,v} > 2(np^2 + n^{1/3})$ for some
$u,v\in V_G$.
 With high probability, for all $t\leq \tau_c$ such that $p(t)\geq n^{-3/5}$, the number of triangles satisfies
\[    Q - \tfrac16\left(n^3 p^3 + n^2 p\right)\leq n^{7/3} p^{2} \,.
\]
\end{lemma}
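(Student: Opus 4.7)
The plan is to adapt the critical-interval supermartingale framework of Theorem~\ref{thm:Q} to deliver a one-sided upper bound on
\[ X := Q - \tfrac16(n^3 p^3 + n^2 p). \]
First I would use \emph{only} the Cauchy--Schwarz lower bound from Lemma~\ref{lem-ai^2} (no co-degree hypothesis is needed) in~\eqref{eq-Q-one-step} to obtain
\[
\E[\D Q \mid \cF_i] \leq 2 - \frac{9Q}{|E|} = 2 - \frac{18Q}{n^2 p - n}.
\]
Substituting $Q = \tfrac16(n^3 p^3 + n^2 p) + X$ and then using the identity $\tfrac{3(n^3 p^3 + n^2 p)}{n^2 p - n} = 3np^2 + 3 + \tfrac{3(np^2+1)}{np-1}$ together with the deterministic derivatives $\D\bigl(\tfrac16 n^3 p^3\bigr) = -3np^2 + O(p/n)$ and $\D\bigl(\tfrac16 n^2 p\bigr) = -1$, all main terms cancel and one is left with the self-correcting drift
\[
\E[\D X \mid \cF_i] \leq -\frac{18 X}{n^2 p - n} + O(p/n).
\]
The entire point of including the $\tfrac16 n^2 p$ correction inside $X$ is that the $-1$ it contributes to the deterministic derivative kills the stubborn $+2$ in $\E[\D Q]$; without this calibration an $O(1)$ residue would survive and swamp the drift as $p$ approaches $n^{-3/5}$.

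Next I would fix a constant $\eta \in (0, 1/3)$ (say $\eta = 1/6$), introduce the critical interval
\[ I_Q = \bigl[(1-\eta) n^{7/3} p^2,\; n^{7/3} p^2\bigr], \]
and define $Z := X - n^{7/3} p^2$. Since $\D(n^{7/3} p^2) = -12 n^{1/3} p + O(n^{-5/3})$, whenever $X \in I_Q$ the drift satisfies
\[
\E[\D Z \mid \cF_i] \leq \bigl(-18(1-\eta) + 12\bigr) n^{1/3} p + o(n^{1/3} p) = -3 n^{1/3} p + o(n^{1/3} p) < 0
\]
for large $n$, where the magnitude $n^{1/3} p \geq n^{-4/15}$ dominates both the $O(p/n)$ drift residual and the $O(n^{-5/3})$ deterministic remainder. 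Letting $i_0$ be the first time $X$ enters $I_Q$ and $\tau := \min\{i > i_0 : X(i) \notin I_Q\}$, the stopped process $(Z(i \wedge \tau))_{i \geq i_0}$ is therefore a supermartingale starting from a value $Z(i_0) \leq 0$.

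For concentration I would apply Freedman's inequality (Theorem~\ref{thm:concentrate}). The hypothesis $t \leq \tau_c$ enforces $Y_{u,v} \leq 2(np^2 + n^{1/3})$ uniformly, so $|\D Q|$ is bounded by the sum of the three co-degrees of the eliminated triangle, giving a Lipschitz bound $B = O(np^2 + n^{1/3})$ on $\D Z$; summing $B^2$ over the at most $n^2 p/6$ remaining steps yields the cumulative variance $v = O\bigl(n^2 p (np^2 + n^{1/3})^2\bigr)$. Because $B$ is negligible compared to $n^{7/3} p^2$ throughout $p \geq n^{-3/5}$, the entry jump is dominated and $|Z(i_0)| \geq s := (\eta/2) n^{7/3} p(i_0)^2$. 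A short case split on whether $p \geq n^{-1/3}$ (so $B \asymp np^2$, $v \asymp n^4 p^5$) or $n^{-3/5} \leq p < n^{-1/3}$ (so $B \asymp n^{1/3}$, $v \asymp n^{8/3} p$) yields $s^2/(v + Bs) \geq \tfrac{1}{8}\eta^2 n^{2/3}$ in the former regime and $\geq \tfrac{1}{8}\eta^2 n^{1/5}$ in the latter. Freedman then bounds the corresponding deviation probability by $\exp(-n^{\Omega(1)})$, and a union bound over the $O(n^2)$ possible entry times $i_0$ completes the proof.

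The main obstacle is the algebraic cancellation underpinning the drift inequality: the analysis has to reach all the way down to $p = n^{-3/5}$, where the drift towards the target boundary $n^{7/3} p^2$ is only of order $n^{-4/15}$, so the $O(p/n)$ residual error in $\E[\D X]$ leaves essentially no margin. This is precisely why the subtracted term inside $X$ must be calibrated to cancel not only the leading $3np^2$ but also the additive $2$ in $\E[\D Q]$; a cruder choice such as $Q - \tfrac16 n^3 p^3$ would leave a constant-order residue and render the supermartingale inequality vacuous well before the target regime.
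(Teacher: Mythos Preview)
Your proposal is correct and follows essentially the same approach as the paper: the same Cauchy--Schwarz bound on $\E[\Delta Q]$, the same centering $Q - \tfrac16(n^3p^3+n^2p) - n^{7/3}p^2$, the same critical-interval width $\eta=1/6$, and the same case split at $p=n^{-1/3}$ in the concentration step (the paper uses Hoeffding rather than Freedman, but since your variance bound is just $B^2$ times the number of steps these coincide). Your explicit remark on why the $\tfrac16 n^2 p$ correction is needed to absorb the $+2$ in $\E[\Delta Q]$ is exactly the point driving the paper's choice of $\Upsilon$.
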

\begin{proof}
By a slight abuse of notation we let $Q$ denote both the collection of all triangles in $G$ and its cardinality. By definition,
\[ \E[\Delta Q \mid \cF_i] = -\sum_{uvw \in Q} \frac{Y_{u,v}+Y_{u,w}+Y_{v,w}-2}{Q} = 2 - \frac1Q \sum_{uv\in E} Y_{u,v}^2\,.\]
Since $\sum_{uv\in E} Y_{u,v} = 3Q$, applying Cauchy-Schwarz shows that $ \sum_{uv\in E} Y_{u,v}^2 \geq 9Q^2 /|E|$ and thus
\[ \E[\Delta Q \mid \cF_i] \leq 2 - \frac{9Q}{|E|} = 2 - \frac{9Q}{\frac12 (n^2 p - n)} \leq 2 - \frac{18}{n^2p}Q\,.\]
Define
\[ \Upsilon = \frac16\left(n^3 p^3 + n^2 p\right) + n^{7/3} p^2\]
and observe that the one-step change in this variable is deterministically given by
\[ \Delta(\Upsilon) = -\bigg(1+O\Big(\frac1{n^2p}\Big)\bigg)\frac6{n^2}\left(\frac12 n^3 p^2 + \frac16 n^2 + 2 n^{7/3} p \right)
=-\left(3 np^2 + 1 + 12n^{1/3} p\right) + O(p/n)
\,.\]
Therefore, if we set
\begin{align*}
Z = Q - \Upsilon &\quad,\quad I_Q =\big[ \Upsilon - \tfrac16 n^{7/3} p^{2} ~,~ \Upsilon \big]
\end{align*}
it follows that whenever $Q \in I_Q$ we have
\begin{align*}
\E[ \Delta Z \mid \cF_i] &\leq 2 - \left(3np^2 + 3 + 15 n^{1/3} p\right) + \left(3 np^2 + 1 + 12n^{1/3} p\right) + O(p/n)= -3 n^{1/3} p + O(p/n) < 0\,,
 \end{align*}
 where the last inequality holds for any sufficiently large $n$.

Let $i_0$ be the first time where $Q\in I_Q$ and write $t_0=t(i_0)$ and $p_0=p(i_0)$. For any $t_0 < \tau_c$
\[ |\Delta Q| \leq 3 \max_{u,v}Y_{u,v} = O(n p_0^2 + n^{1/3})\,,\]
and observing that this bound is of order strictly lower than $n^{7/3} p_0^{2}$ for all $p_0\geq 1/n$ we obtain that
\[ Z(i_0) = -(\tfrac16-o(1))n^{7/3} p_0^{2}\,.\]
From this point the process has at most $n^2 p_0/2$ steps until its conclusion, therefore we may substitute $s = \frac18 n^{7/3} p_0^2 $ and deduce from Hoeffding's inequality that for some fixed $c>0$
\[ \P( \exists i>i_0:\, Z(i) \geq Z(i_0) + s) \leq \exp\left(-c\frac{s^2}{n^2 p_0 (n p_0^2 + n^{1/3})^2}\right)
= \exp\left(-c\frac{n^{8/3} p_0^3}{( n p_0^2 + n^{1/3})^2}\right)\,.\]
If $p_0 \geq n^{-1/3}$ then the last exponent has order $n^{2/3} / p_0 \geq n^{2/3}$, and otherwise it has order $n^{2} p_0^3 \geq n^{1/5}$ for any $p_0 \geq n^{-3/5}$.
A union bound over $i_0$ now completes the proof.
\end{proof}

\begin{proof}
  [\emph{\textbf{Proof of Theorem~\ref{thm-lower-bound}}}]
 Fix $0<\epsilon<\frac16$ and let $i_0$ be such that \[ p_0=p(i_0) = n^{-1/2+\epsilon}\,.\]
 Condition on $\cF_{i_0}$ and assume that up to this point we had $Y_{u,v}=(1+o(1))n^2 p$ for all $u,v\in V_G$.

For two distinct triangles $uvw,xyz\in Q$ we write $uvw \sim xyz$ to denote that they share an edge.
Further denote the number of distinct triangles incident to some $uvw\in Q$ by
 \[ B_{uvw} = \#\{xyz\in Q : xyz\sim uvw\} = Y_{u,v} + Y_{u,w} + Y_{v,w} - 3 \,.\]
 Observe that our hypothesis on the co-degrees implies that $B_{uvw} = n^{2\epsilon+o(1)}$ for all $Q$ triangles at time $i_0$.
 In addition, we claim that if $uvw \in Q(j-1)$ then at the end of round $j$ either $uvw \notin Q(j)$ or $B_{uvw}(j-1) - B_{uvw}(j) \leq 4$, and in the latter case the co-degree of at most 2 edges in $uvw$ has changed. Indeed, if the triangle selected in round $j$ is either $uvw$ itself or some triangle $xyz \sim uvw$ then at once $uvw\notin Q(j)$, whereas removing a vertex disjoint triangle $xyz$ does not affect $B_{uvw}$. Consider therefore the case of removing a triangle $xyz$ sharing exactly one vertex in common with $uvw$, that is, w.l.o.g.\ $x=u$ and $\{y,z\}\cap \{v,w\}=\emptyset$. In this case, each of the edges $uy,uz$ may participate in at most one triangle involving the edge $uv$ and at most another involving the edge $uw$, thus at most 4 triangles can be deducted from the count of $B_{uvw}$ in this case whereas $Y_{v,w}$ remains unchanged. 
 (This last scenario is tight whenever both $y$ and $z$ belong to $N_{u,v} \cap N_{u,w}$.)

 Next we will consider a random subset $\sX$ of all triangles in $Q(i_0)$. Initialize this set by $\sX(i_0)=\emptyset$ and let it evolve until the culmination of the triangle removal process, denoting its final value by $\sX_\star$. To formulate its evolution rule we further maintain an auxiliary set $\sY$, also initialized by $\sY(i_0)=\emptyset$.
At round $j> i_0$ do as follows:
 \begin{itemize}[\noindent$\bullet$]
   \item  Process the triangles $Q(j)$ in an arbitrary order.
   \item If the current triangle $uvw \in Q(j)$ has $uvw\notin \sY$ and in addition this is the first round $j$ after which
   some edge $uv$ in this triangle has no other triangles incident to it ($Y_{u,v}=1$) then:
       \begin{itemize}
         \item Add $uvw$ to $\sX$.
         \item Add all triangles in $\cN_2(uvw) $ to $\sY$, where $\cN_1(uvw) = \{xyz : xyz \sim uvw\}$ denotes all triangles incident to $uvw$ and $\cN_2(uvw) = \cN_1(uvw) \cup\{x'y'z' : x'y'z'\sim xyz \in \cN_1(uvw)\}$.
       \end{itemize}
 \end{itemize}
(Note that at no point do we ever delete triangles from $\sX$ or from $\sY$.)
As explained above, removing some triangle $xyz$ can affect triangles incident to at most two edges of $uvw$. Therefore, if $j$ is the first round at the end of which $uvw$ has some edge with co-degree $1$, and $uvw$ is intact at the end of this round, then at least one of its other edges has co-degree greater than $1$ and so $B_{uvw}>0$ upon the insertion of $uvw$ to $\sX$.

With the above explanation in mind, consider some time $i_1 > i_0$. At time $i_0$, every triangle $uvw\in Q(i_0)$ had all of its co-degrees equal $n^{2\epsilon+o(1)}>1$. If $uvw \in Q(i_1)$ has $B_{uvw}=0$ then either $uvw \in \sX$ or necessarily there exists some $xyz\in \sX$ such that at some point $i_0 < j \leq i_1$ we had $uvw\in \cN_2(xyz)$.
By the co-degrees hypothesis we know that already at time $i_0$ we had $|\cN_2(xyz)| \leq n^{4\epsilon+o(1)}$ for all $xyz\in Q(i_0)$, and since co-degrees only decrease throughout our process, we can conclude that
\begin{equation}
  \label{eq-X-lower-bound}
  |\sX(i_1)| \geq |\left\{ uvw \in Q(i_1) : B_{uvw} = 0\right\}| \, n^{-4\epsilon-o(1)}\,.
\end{equation}
Let $i_1$ be such that
$ p_1 = p(i_1) = \frac{1}{\sqrt{n} \log n}$.
At this point $|E| = \frac12(n^2p_1 -n) = (\frac12+o(1))n^{3/2}(\log n)^{-1}$ while
$n^3 p_1^3 = O(n^{3/2} (\log n)^{-3})$. Obviously, if the process concludes before time $i_1$ we have at least $|E(i_1)| = n^{3/2-o(1)}$ edges
in the final graph and there is nothing left to prove. Assume therefore that the process is still active at time $i_1$.
By Lemma~\ref{lem-Q-upper-bound} (noting that our assumption on the co-degrees satisfies the hypothesis $t(i_1)<\tau_c$ in that lemma for any sufficiently large $n$ since $\epsilon<\frac16$) we get
\begin{equation}
  \label{eq-Q-nearly-E/3}
  Q(i_1) \leq \tfrac16 n^{3/2} (\log n)^{-3} + \tfrac16 n^{3/2} (\log n)^{-1} + n^{4/3} = (\tfrac13 + o(1))|E(i_1)|\,.
\end{equation}
Clearly, if $\{ uv\in E(i_1) : Y_{u,v}=0\} \geq \delta |E(i_1)|$ for some arbitrarily small fixed $\delta>0$, that is, a positive fraction of the edges have no triangles incident to them, then at least $\delta |E(i_1)| = n^{3/2-o(1)}$ edges will survive the removal process and we can conclude the proof. Assume therefore that there are at most $o(|E(i_1)|)$ such edges. In particular,
\[ Q(i_1) = \tfrac13 \sum_{uv\in E(i_1)} Y_{u,v} \geq \tfrac13 \left|\left\{ uv \in E(i_1) : Y_{u,v}\neq 0\right\}\right| \geq \left(\tfrac13-o(1)\right)\left|E(i_1)\right|\,.\]
Furthermore, together with~\eqref{eq-Q-nearly-E/3}, this implies that almost all triangles are edge disjoint. A simple way to see this is to order the $Q(i_1)$ triangles arbitrarily and then sequentially add their edges to the initial edge set $\{uv\in E(i_1): Y_{u,v}=0\}$, eventually arriving at the edge set of $G(i_1)$. If for some fixed $\delta>0$ at least $\delta Q(i_1)$ triangles share any edges with the triangles preceding them in this procedure then the total number of edges would be at most $(3-\delta)Q(i_1) + o(|E(i_1)|)$, which by~\eqref{eq-Q-nearly-E/3} is at most
$(1-\frac\delta3+o(1)) |E(i_1)|$, contradiction.

Examining the above statement we see that one can exclude $o(|E(i_1)|)$ triangles and be left with a set $\cA$ of $(\frac13-o(1))|E(i_1)|$ edge disjoint triangles, i.e., $B_{uvw}=0$ for every $uvw\in\cA$. Re-adding the excluded triangles may now increase the value of $B_{uvw}$ for at most $o(|E(i_1)|)$ distinct triangles $uvw\in \cA$ (an excluded triangle impacts at most $3$ triangles in $\cA$). Altogether,
\begin{align*}%\label{eq-Buvw-0}
  \#\left\{ uvw \in Q(i_1) : B_{uvw} = 0\right\} = (\tfrac13 -o(1))|E(i_1)| = n^{3/2-o(1)}\,,
\end{align*}
which when added to~\eqref{eq-X-lower-bound} now implies that
\begin{align}
  \label{eq-X-lower-bound-2}
 |\sX_\star| \geq |\sX(i_1)| \geq n^{3/2-4\epsilon-o(1)}\,.
\end{align}

We now focus our attention on which edges among the triangles of $\sX_\star$ belongs to the final outcome of the process.
To this end, it will be useful to consider an equivalent formulation of the process starting from time $i_0$. Instead of drawing a uniform triangle in every step, consider a uniform permutation $\sigma$ over the triangles $Q(i_0)$, and the deterministic process that examines these triangles sequentially and each time removes the 3 edges of the triangle under consideration iff it is still intact (i.e., iff all of its edges still belong to the current graph). Clearly this process is merely a time-rescaled version of the usual triangle removal process, hence it suffices to analyze this one for establishing some properties of the final output.

For each $uvw\in \sX$ define an event $\cE_{uvw}$ which will depend only on the internal ordering of the triangles $\cN_2(uvw)$ in $\sigma$. The formulation of the event will depend on the constellation of triangles incident to $uvw$ upon its addition to $\sX_\star$. Let $j$ denote the round at the end of which $uvw$ was added to $\sX_\star$. By definition, at the end of this round there was some edge of $uvw$ that had no other triangles incident to it, that is, w.l.o.g.\ we had $Y_{u,v}=1$. Choose some arbitrary $xyz \in Q(j)$ which at the end of round $j$ satisfied $xyz \in \cN_1(uvw)$, while recalling that by construction $B_{uvw}(j) > 0$ thus such a triangle necessarily exists. Define $\cE_{uvw}$ to be the event that $\sigma$ arranges $xyz$ prior to all triangles $\cN_1(xyz)$ (including $uvw$).

First, recall that for any $xyz$ as above $\cN_1(xyz)\leq n^{2\epsilon+o(1)}$ by the co-degree hypothesis, thus
$ \P(\cE_{uvw}) \geq n^{-2\epsilon-o(1)}$.
Second, observe that on the event $\cE_{uvw}$ the triangle $xyz$ under consideration will be examined for possible removal prior to examining any other $x'y'z'\sim xyz$ from $Q(j)$, thus the process will delete the edges $\{xy,xz,yz\}$ at that time. Only later will $uvw \in \cN_1(xyz)$ be processed, and at that time no action will be taken due to its missing edge (the edge in common with $xyz$). In particular, the edge $uv$ with $Y_{u,v}(j)=1$, whose only incident triangle at time $j$ was $uvw$, will belong to the final output.

Since the triangles in $\sX_\star$ are edge disjoint, the final graph contains at least $\sum \one_{\cE_{uvw}}$ edges.
Moreover, the occurrence of $\cE_{uvw}$ is purely a function of the internal ordering according to $\sigma$ between $xyz\in \cN_1(uvw)$ and $\cN_1(xyz) \subseteq \cN_2(uvw)$. By construction, we exclude all triangles in $\cN_2(uvw)$ from possible future inclusion in $\sX_\star$ (by adding them to $\sY$), thus $\sum \one_{\cE_{uvw}}$ stochastically dominates a binomial random variable with parameters $\Bin(|\sX_\star|,n^{-2\epsilon-o(1)})$. By~\eqref{eq-X-lower-bound-2} and standard estimates for the binomial distribution this is at least $n^{3/2-6\epsilon-o(1)}$ w.h.p., as required.
\end{proof}

\section*{Acknowledgments}
We thank Noga Alon, Jeff Kahn, Peter Keevash and Joel Spencer for fruitful discussions.

\begin{bibdiv}
\begin{biblist}

%\bib{AKSz}{article}{
%   author={Ajtai, Mikl{\'o}s},
%   author={Koml{\'o}s, J{\'a}nos},
%   author={Szemer{\'e}di, Endre},
%   title={A dense infinite Sidon sequence},
%   journal={European J. Combin.},
%   volume={2},
%   date={1981},
%   number={1},
%   pages={1--11},
%%   issn={0195-6698},
%%   review={\MR{611925 (83f:10056)}},
%}

\bib{AKS}{article}{
   author={Alon, Noga},
   author={Kim, Jeong-Han},
   author={Spencer, Joel},
   title={Nearly perfect matchings in regular simple hypergraphs},
   journal={Israel J. Math.},
   volume={100},
   date={1997},
   pages={171--187},
%   issn={0021-2172},
%   review={\MR{1469109 (98k:05112)}},
%   doi={10.1007/BF02773639},
}

%\bib{AS}{book}{
%   author={Alon, Noga},
%   author={Spencer, Joel H.},
%   title={The probabilistic method},
%   series={Wiley-Interscience Series in Discrete Mathematics and
%   Optimization},
%   edition={3},
%%   note={With an appendix on the life and work of Paul Erd\H os},
%   publisher={John Wiley \& Sons Inc.},
%   place={Hoboken, NJ},
%   date={2008},
%   pages={xviii+352},
%%   isbn={978-0-470-17020-5},
%%   review={\MR{2437651 (2009j:60004)}},
%}

\bib{BB}{article}{
  author={Bennett, Patrick},
  author={Bohman, Tom},
  title={A natural barrier in random greedy hypergraph matching},
  note={Manuscript},
}

\bib{Bohman}{article}{
   author={Bohman, Tom},
   title={The triangle-free process},
   journal={Adv. Math.},
   volume={221},
   date={2009},
   number={5},
   pages={1653--1677},
%   issn={0001-8708},
%   review={\MR{2522430 (2010h:05271)}},
%   doi={10.1016/j.aim.2009.02.018},
}

\bib{BFL}{article}{
   author={Bohman, Tom},
   author={Frieze, Alan},
   author={Lubetzky, Eyal},
   title={A note on the random greedy triangle-packing algorithm},
   journal={J. Comb.},
   volume={1},
   date={2010},
   number={3-4},
   pages={477--488},
%   issn={2156-3527},
%   review={\MR{2799220 (2012b:05249)}},
}

\bib{BK}{article}{
   author={Bohman, Tom},
   author={Keevash, Peter},
   title={The early evolution of the $H$-free process},
   journal={Invent. Math.},
   volume={181},
   date={2010},
   number={2},
   pages={291--336},
%   issn={0020-9910},
%   review={\MR{2657427 (2011f:05285)}},
%   doi={10.1007/s00222-010-0247-x},
}

%\bib{BP}{article}{
%   title={SIR epidemics on random graphs with a fixed degree sequence},
%   author={Bohman, Tom},
%   author={Picollelli, Michael},
%   journal={Random Structures Algorithms},
%   status={to appear},
%}

\bib{Bol1}{article}{
   title={The Life and Work of Paul Erd\H{o}s},
   author={Bollob{\'a}s, B{\'e}la},
   book={
      title={Wolf Prize in mathematics. Vol. 1},
      editor={Chern, S. S.},
      editor={Hirzebruch, F.},
      publisher={World Scientific Publishing Co. Inc.},
      place={River Edge, NJ},
      date={2000},
      %   pages={xiv+761},
   },
   pages={292--315},
%   isbn={981-02-3945-9},
%   review={\MR{1834907 (2002h:01040a)}},
}

\bib{Bol2}{article}{
   author={Bollob{\'a}s, B{\'e}la},
   title={To prove and conjecture: Paul Erd\H os and his mathematics},
   journal={Amer. Math. Monthly},
   volume={105},
   date={1998},
   number={3},
   pages={209--237},
%   issn={0002-9890},
%   review={\MR{1615568 (2000b:01025)}},
%   doi={10.2307/2589077},
}

%\bib{EH}{article}{
%   author={Erd{\H{o}}s, P.},
%   author={Hanani, H.},
%   title={On a limit theorem in combinatorial analysis},
%   journal={Publ. Math. Debrecen},
%   volume={10},
%   date={1963},
%   pages={10--13},
%%   issn={0033-3883},
%%   review={\MR{0166116 (29 \#3394)}},
%}

\bib{ESW}{article}{
   author={Erd{\H{o}}s, Paul},
   author={Suen, Stephen},
   author={Winkler, Peter},
   title={On the size of a random maximal graph},
%   booktitle={Proceedings of the Sixth International Seminar on Random
%   Graphs and Probabilistic Methods in Combinatorics and Computer Science,
%   ``Random Graphs '93'' (Pozna\'n, 1993)},
   journal={Random Structures Algorithms},
   volume={6},
   date={1995},
   number={2-3},
   pages={309--318},
%   issn={1042-9832},
%   review={\MR{1370965 (96h:05176)}},
%   doi={10.1002/rsa.3240060217},
}

\bib{Freedman}{article}{
   author={Freedman, David A.},
   title={On tail probabilities for martingales},
   journal={Ann. Probability},
   volume={3},
   date={1975},
   pages={100--118},
%   review={\MR{0380971 (52 \#1868)}},
}

\bib{GKPS}{article}{
   author={Gordon, Daniel M.},
   author={Kuperberg, Greg},
   author={Patashnik, Oren},
   author={Spencer, Joel H.},
   title={Asymptotically optimal covering designs},
   journal={J. Combin. Theory Ser. A},
   volume={75},
   date={1996},
   number={2},
   pages={270--280},
%   issn={0097-3165},
%   review={\MR{1401003 (97g:05049)}},
%   doi={10.1006/jcta.1996.0077},
}

\bib{Grable}{article}{
   author={Grable, David A.},
   title={On random greedy triangle packing},
   journal={Electron. J. Combin.},
   volume={4},
   date={1997},
%   number={1},
   pages={Research Paper 11, 19 pp.},% (electronic)},
%   issn={1077-8926},
%   review={\MR{1445383 (98a:05131)}},
}

%\bib{KR}{article}{
%   author={Kostochka, A. V.},
%   author={R{\"o}dl, V.},
%   title={Partial Steiner systems and matchings in hypergraphs},
%   booktitle={Proceedings of the Eighth International Conference ``Random
%   Structures and Algorithms'' (Poznan, 1997)},
%   journal={Random Structures Algorithms},
%   volume={13},
%   date={1998},
%   number={3-4},
%   pages={335--347},
%%   issn={1042-9832},
%%   review={\MR{1662789 (99i:05153)}},
%%   doi={10.1002/(SICI)1098-2418(199807)12:4<335::AID-RSA2>3.0.CO;2-U},
%}

%\bib{MR}{book}{
%   author={Molloy, Michael},
%   author={Reed, Bruce},
%   title={Graph colouring and the probabilistic method},
%   series={Algorithms and Combinatorics},
%   volume={23},
%   publisher={Springer-Verlag},
%   place={Berlin},
%   date={2002},
%   pages={xiv+326},
%%   isbn={3-540-42139-4},
%%   review={\MR{1869439 (2003c:05001)}},
%}

\bib{Rodl}{article}{
   author={R{\"o}dl, Vojt{\v{e}}ch},
   title={On a packing and covering problem},
   journal={European J. Combin.},
   volume={6},
   date={1985},
   number={1},
   pages={69--78},
%   issn={0195-6698},
%   review={\MR{793489 (86k:05033)}},
}

\bib{RT}{article}{
   author={R{\"o}dl, Vojt{\v{e}}ch},
   author={Thoma, Lubo{\v{s}}},
   title={Asymptotic packing and the random greedy algorithm},
   journal={Random Structures Algorithms},
   volume={8},
   date={1996},
   number={3},
   pages={161--177},
%   issn={1042-9832},
%   review={\MR{1605397 (98m:05155)}},
%   doi={10.1002/(SICI)1098-2418(199605)8:3<161::AID-RSA1>3.0.CO;2-W},
}

\bib{Spencer}{article}{
   author={Spencer, Joel},
   title={Asymptotic packing via a branching process},
   journal={Random Structures Algorithms},
   volume={7},
   date={1995},
   number={2},
   pages={167--172},
%   issn={1042-9832},
%   review={\MR{1369062 (96m:05156)}},
%   doi={10.1002/rsa.3240070206},
}

%\bib{TWZ}{article}{
%   author={Telcs, Andr{\'a}s},
%   author={Wormald, Nicholas},
%   author={Zhou, Sanming},
%   title={Hamiltonicity of random graphs produced by 2-processes},
%   journal={Random Structures Algorithms},
%   volume={31},
%   date={2007},
%   number={4},
%   pages={450--481},
%%   issn={1042-9832},
%%   review={\MR{2362639 (2008i:05175)}},
%%   doi={10.1002/rsa.20133},
%}

%\bib{Vu}{article}{
%   author={Vu, Van H.},
%   title={New bounds on nearly perfect matchings in hypergraphs: higher
%   codegrees do help},
%   journal={Random Structures Algorithms},
%   volume={17},
%   date={2000},
%   number={1},
%   pages={29--63},
%%   issn={1042-9832},
%%   review={\MR{1768848 (2001e:05031)}},
%%   doi={10.1002/1098-2418(200008)17:1<29::AID-RSA4>3.0.CO;2-W},
%}

\bib{Wormald}{article}{
  author={Wormald, N.C.},
  title={The differential equation method for random graph processes and greedy algorithms},
  pages={73--155},
  date={1999},
   book={
      title={Lectures on approximation and randomized algorithms},
      series={Advanced Topics in Mathematics},
      editor={M.\ Karonski},
      editor={H.J.\ Pr\"{o}mel},
      publisher={Polish Scientific Publishers PWN},
      place={Warsaw},
   },
}

\end{biblist}
\end{bibdiv}

\end{document}